\documentclass[12pt]{article}
\usepackage{amsmath}
\usepackage{amsthm,amsmath,amsfonts,amssymb,color}
\usepackage{verbatim}
\usepackage{mathrsfs}
\usepackage[alphabetic]{amsrefs}
\usepackage{graphicx}
\numberwithin{equation}{section}

\theoremstyle{plain}

\newtheorem{thm}{Theorem}
\newtheorem{lemma}[thm]{Lemma}
\newtheorem{cor}[thm]{Corollary}
\newtheorem{prop}[thm]{Proposition}
\newtheorem{definition}[thm]{Definition}

\theoremstyle{remark}
\newtheorem{rem}[thm]{Remark}

\newcommand{\R}{\mathbb{R}}
\newcommand{\Rn}{\mathbb{R}^n}

\newcommand{\Lps}{\mathcal{L}_p^s\,}

\newcommand{\dd}{\partial}
\newcommand{\e}{\varepsilon}
\newcommand{\h}{\mathfrak{h}}

\DeclareMathOperator*{\osc}{osc}

\DeclareMathOperator*{\spt}{supp}

\newcommand{\lap}{\Delta}

\newcommand{\ww}{W^{s,p}(\R^n)}
\newcommand{\wwz}{W^{s,p}_0(\Omega)}
\renewcommand{\u}{\mathscr{U}}
\renewcommand{\l}{\mathscr{L}}
\begin{document}
\title{Perron's Method and Wiener's Theorem for a Nonlocal Equation}
\author{Erik Lindgren\footnote{eriklin@kth.se, Department of Mathematics, KTH. }  \qquad Peter Lindqvist\footnote{lqvist@math.ntnu.no, Department of Mathematics, NTNU.}}
\maketitle
\begin{abstract}\noindent 
We study the Dirichlet problem for non-homogeneous equations involving the fractional $p$-Laplacian. We apply Perron's method and prove Wiener's resolutivity theorem.
 \end{abstract}
 
 \noindent {\bf   AMS classification:} 35J60, 35J70, 35R09, 35R11,  31B35 \\
\noindent {\bf Keywords:} Fractional $p$-Laplacian, non-local equation, nonlinear equation, degenerate equation, singular equation, nonlinear potential theory, Perron's method, resolutivity.\\
\section{Introduction}
The object of our work is to study the Dirichlet problem
\begin{equation}\label{eq:mainn}
\begin{cases}
(-\lap_p)^s u = f\quad \text{in}\quad & \Omega,\\
u = g\quad \qquad \text{in}\quad & \R^n\setminus \Omega,
\end{cases}
\end{equation}
where $s\in (0,1)$, $p\in (1,\infty)$ and
$$
(-\lap_p)^s u\, (x):=2 \,\text{PV} \int_{\R^n}\frac{|u(x)-u(x+y)|^{p-2}(u(x)-u(x+y))}{|y|^{n+sp}}\, dy.
$$
This is a non-linear counterpart to the fractional Laplace Equation
$$(-\Delta)^s u\, =\, f.$$ 

We have found the Perron method suitable to investigate the boundary behaviour.
The celebrated method, introduced in 1923 by O. Perron (cf. \cite{Per23}) for the Laplace Equation, has a wide range of applications. It requires little, mainly that a comparison principle is valid. It has been adapted to many non-linear equations and appears in the modern theory of viscosity solutions, too.
We shall study the Dirichlet boundary value problem in a bounded domain $\Omega$ in $\R^n$
for an equation that arises in the following way.
The problem of minimizing the variational integral
\begin{equation} \label{ett} J(v)\,=\,\frac{1}{p}\int\limits_{\R^n\,}\!\!\int\limits_{\R^n}\frac{|v(y)-v(x)|^p}{|y-x|^{n+sp}}\,dy\,dx\,-\,\int\limits_{\Omega}\!fv\,dx
\end{equation}
among all functions $v$ with given boundary values $g$ in the sense that $v-g \in W_0^{s,p}(\Omega)$ leads to the Euler-Lagrange Equation
\begin{equation}\label{tvaa}	\int\limits_{\R^n\,}\!\!\int\limits_{\R^n}\,\frac{|v(y)-v(x)|^{p-2}\bigl(v(y)-v(x)\bigr)\bigl(\phi(y)-\phi(x)\bigr) }{|y-x|^{n+sp}}\,dx\,dy 
  =\,\int_{\Omega}\!f\phi\,dx,
\end{equation}
for all  $\phi \in C^{\infty}_0(\Omega)$. Here $0 < s < 1,\, 1<p<\infty,$ and $f \in L^{\infty}(\R^n).$  All the appearing functions are defined in the whole space $\R^n,$
although the bounded domain $\Omega$ and its boundary $\partial \Omega$ are at the focus. Notice that the boundary values $g$ are prescribed not only on $\partial \Omega$ but in the whole complement $\R^n \setminus \Omega,$ so that the exterior values count.

The equation can be written as
\begin{equation}\label{tre}
  \Lps v(x)\,\equiv\, 2\cdot \lim_{\e \to 0+}\! \underset{|y-x|>\e}{\int}\!\frac{|v(y)-v(x)|^{p-2}\bigl(v(y)-v(x)\bigr) }{|y-x|^{n+sp}}\,dy =\,-f(x)
\end{equation}
for a.\,e. $x\in \Omega,$ provided that the principal value of the integral converges so well that $\Lps v\in L^1_{loc}(\Omega).$ The so defined $\Lps$ is a non-linear and non-local ''fractional differential operator'' often called the fractional $p$-Laplacian. Operators of this type were first studied in \cite{IN10}, but in a slightly different form and in the viscosity sense. The notation
$$\Lps v \,=\, -(-\Delta_p)^sv$$
suggested,  for instance, in \cite{BF14}, \cite{IS14}, \cite{FP14} and \cite{CKP14h} stems from the linear case $p=2$, when one can use the Fourier transform to define
\begin{equation*}
  \widehat{(-\Delta)^s v(x)}  = (2\pi|\xi|)^{2s}\hat{f}(\xi).
\end{equation*}
In this case
\begin{equation*}
  - (-\Delta)^s v(x) = c_{n,s}\,\mathrm{PV}\!\int\limits_{\R^n}\frac{v(y)-v(x)}{|y-x|^{n+2s}}\,dy.
  \end{equation*}
Unfortunately, the simplification offered by the Fourier transform is not available in the non-linear situation $p\not=2$. In principle, the limit as $s \nearrow 1$ leads, under suitable normalization to the much studied $p$-Laplace operator (see for instance Theorem 3.1 in \cite{BPS16}, \cite{BBM01}, and \cite{BBM02}). Then formulas (\ref{ett}), (\ref{tvaa}), and (\ref{tre}) reduce to
\begin{gather}
  J(v)\,=\,\frac{1}{p}\int\limits_{\Omega}|\nabla v|^p\,dx\,-\,\int\limits_{\Omega}fv\,dx,\nonumber\\
\label{glm}  \int\limits_{\Omega}|\nabla v|^{p-2}\nabla v\cdot\nabla \phi\,dx\,=\, \int\limits_{\Omega}f\phi\,dx,\\
  \mathrm{div}\bigl(|\nabla v|^{p-2}\nabla v\bigr)\,=\, -f. \nonumber
\end{gather}

Let us return to the boundary value problem (\ref{eq:mainn}). We assume that
\begin{itemize}
\item $p\in (1,\infty)$\quad \text{and}\quad $s\in (0,1)$
\item $\Omega$ is a bounded domain
\item $ f\in L^{\infty}(\Omega)$
\item $ g \in C(\R^n) \cap L^{\infty}(\R^n)$
\end{itemize}
and nothing else.
We emphasize that $\Omega$ is otherwise quite arbitrary and that unbounded domains could be included upon some minor adjustments.  Its boundary $\partial \Omega$ can even have positive $n$-dimensional volume! The method works well under more general assumptions on the boundary values\footnote{In the classical context with the Laplace equation, Perron's method does not even require that the boundary values be prescribed by a Lebesgue measurable function. In our situation $g$ appears in a  Lebesgue integral taken over the complement $\R^n \setminus \Omega,$ and so $g$ must be measurable. Of course, results are poor under the most general assumptions.}. First, the boundedness of $g$  can be relaxed a lot. Second, the continuity is not needed for Perron's method itself, but for ''arbitrary'' functions $g$ the more interesting properties would fail. 

The main concepts  are the upper class $\u_g,$ which contains supersolutions, and the lower class $\l_g,$ which contains subsolutions. Perron's method produces two pointwise defined functions:
$$\displaystyle \overline{\mathfrak{h}}_g(x) \,=\, \inf\limits_{v\in \u_g}\{v(x)\},\qquad \underline{\mathfrak{h}}_g(x) \,=\, \sup\limits_{u\in \l_g}\{u(x)\}.$$
It follows from the construction that these so-called Perron solutions are ordered:
$$  -\infty \,<\,\underline{\mathfrak{h}}_g(x)\,\leq\,\overline{\mathfrak{h}}_g(x)\,< \,\infty,\quad x \in \Omega.$$ 
They belong to $W^{s,p}_{loc}(\Omega)$ and are continuous except possibly on the boundary $\partial \Omega.$ Our first theorem assures that they are solutions, indeed they are local weak solutions (Theorem \ref{central}). The method is consistent: if the boundary values $g$ also belong to $W^{s,p}(\R^n)$, then the Perron solutions coincide with the unique minimizer of the variational integral (\ref{ett}), but in our work we include more general $g$'s.

The question of \emph{resolutivity} is central: do the Perron solutions coincide? By the constructions, any reasonable solution is squeezed between them. We prove the counterpart to Wiener's celebrated resolutivity theorem in \cite{Wie} (with prescribed continuous boundary values):

\begin{thm}[Wiener]\label{Wiener} $\underline{\mathfrak{h}}_g\,=\,\overline{\mathfrak{ h}}_g.$
\end{thm}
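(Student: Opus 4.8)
The plan is to establish the two inequalities $\overline{\mathfrak h}_g \le \underline{\mathfrak h}_g$ (the reverse is automatic from the construction, as noted in the excerpt) by an approximation scheme that reduces the problem to the case of \emph{nice} boundary data, where resolutivity is essentially immediate. First I would treat the model case $g \in W^{s,p}(\R^n) \cap C(\R^n) \cap L^\infty(\R^n)$: by the consistency statement already recorded before Theorem~\ref{central}, both Perron solutions then coincide with the unique minimizer $v_g$ of the variational integral $J$ in \eqref{ett} over the class $\{v : v - g \in W^{s,p}_0(\Omega)\}$, since the minimizer is a supersolution \emph{and} a subsolution and hence lies simultaneously in $\mathscr U_g$ and $\mathscr L_g$, forcing $\underline{\mathfrak h}_g \le v_g \le \overline{\mathfrak h}_g$. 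So resolutivity holds with $\mathfrak h_g = v_g$ on this dense subclass.

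Next I would prove the key \emph{stability/continuity estimate}: if $g_1, g_2 \in C(\R^n)\cap L^\infty(\R^n)$ then
\begin{equation*}
  \sup_{\Omega}\, \bigl|\overline{\mathfrak h}_{g_1} - \overline{\mathfrak h}_{g_2}\bigr| \;\le\; \sup_{\R^n}\, |g_1 - g_2|,
\end{equation*}
and the same for $\underline{\mathfrak h}$. This follows from the comparison principle: if $v \in \mathscr U_{g_1}$ then $v + \|g_1-g_2\|_\infty$ is a supersolution with exterior data $\ge g_2$, hence lies in $\mathscr U_{g_2}$ (one must check the definition of the upper class is stable under adding constants and under the ordering of exterior data — this is where the precise formulation of $\mathscr U_g$ from the earlier sections is used). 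Taking infima gives $\overline{\mathfrak h}_{g_2} \le \overline{\mathfrak h}_{g_1} + \|g_1-g_2\|_\infty$, and symmetrically. The same argument applied to $\mathscr L_g$, \emph{plus} the cheap inequality $\underline{\mathfrak h}_g \le \overline{\mathfrak h}_g$, yields $|\overline{\mathfrak h}_{g_1} - \underline{\mathfrak h}_{g_1}| \le |\overline{\mathfrak h}_{g_1} - \overline{\mathfrak h}_{g_2}| + |\overline{\mathfrak h}_{g_2} - \underline{\mathfrak h}_{g_2}| + |\underline{\mathfrak h}_{g_2} - \underline{\mathfrak h}_{g_1}| \le 2\|g_1 - g_2\|_\infty + |\overline{\mathfrak h}_{g_2} - \underline{\mathfrak h}_{g_2}|$.

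Finally I would run the approximation: given $g \in C(\R^n)\cap L^\infty(\R^n)$, choose $g_j \in W^{s,p}(\R^n)\cap C(\R^n)\cap L^\infty(\R^n)$ with $\|g_j - g\|_{L^\infty(\R^n)} \to 0$ — e.g.\ mollify $g$ and cut off at large radius, or multiply by a smooth compactly supported cutoff that is $1$ on a neighborhood of $\overline\Omega$, so that the truncated function lies in $W^{s,p}(\R^n)$ while agreeing with $g$ near $\Omega$ (since $g$ is bounded and continuous, the $L^\infty$ error can be made arbitrarily small). By the model case $\overline{\mathfrak h}_{g_j} = \underline{\mathfrak h}_{g_j}$, so the displayed three-term inequality collapses to $\sup_\Omega|\overline{\mathfrak h}_g - \underline{\mathfrak h}_g| \le 2\|g_j - g\|_\infty \to 0$, giving $\overline{\mathfrak h}_g = \underline{\mathfrak h}_g$.

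The main obstacle I anticipate is not the approximation bookkeeping but the two structural facts feeding into it: (i) that the minimizer of $J$ is genuinely admissible in \emph{both} Perron classes — i.e.\ that a weak solution is both a supersolution and a subsolution in the sense used to define $\mathscr U_g, \mathscr L_g$, including the matching of exterior data (this should be packaged in the consistency statement preceding Theorem~\ref{central}, but needs to be invoked carefully); and (ii) the comparison principle in the form "adding a nonnegative constant to a supersolution and enlarging the exterior data keeps it in the upper class," which relies on the non-local operator's monotonicity and on the upper class being defined via comparison with solutions in subdomains rather than via a pointwise differential inequality. Making sure the constant really can be absorbed — in particular that the right-hand side $f$ is unaffected, which it is since constants are annihilated by $\laps$ — is the delicate point, but it is exactly the kind of estimate the comparison principle established earlier in the paper is designed to deliver.
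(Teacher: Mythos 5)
Your reduction scheme (a stability estimate from the comparison principle plus a dense model subclass) does mirror the first step of the paper's proof, which uses exactly the argument $\overline{\h}_g \le \overline{\h}_{\phi+\e} = \overline{\h}_\phi + \e \le \underline{\h}_g + 2\e$ to reduce to $g \in C^\infty(\R^n) \cap L^\infty(\R^n)$. But your ``model case'' contains a genuine gap, and it is precisely the difficulty the rest of the paper's proof is built to overcome. The claim that the variational minimizer $v_g$ with $v_g - g \in W^{s,p}_0(\Omega)$ lies in $\u_g \cap \l_g$ is \emph{false}: membership in $\u_g$ requires not only that $v_g$ be a (lower semicontinuous) supersolution but also the pointwise condition (ii), $\liminf_{x\to\xi} v_g(x) \geq g(\xi)$ for every $\xi \in \partial\Omega$. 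A weak solution with Sobolev boundary data need not satisfy this; the paper explicitly notes that in the range $sp<n$ isolated boundary points are ``ignored'' by the variational solution (punctured ball). The consistency remark before Theorem~\ref{central} that you invoke requires $h_g$ to attain the boundary values \emph{at every point}, which is strictly stronger than $h_g - g \in W^{s,p}_0(\Omega)$. If your model-case claim held, the theorem would be essentially trivial; the fact that $v_g$ can fail to be admissible is why the paper instead uses the \emph{obstacle problem} with obstacle $g$ (the constraint $v \ge g$ in all of $\R^n$ forces (ii) to hold) to obtain an admissible supersolution $v \in \u_g$, then pushes it down toward the variational solution $W$ by Poisson-modifying over an exhaustion by regular subdomains $D_j$. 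Each $P(v,D_j)$ stays in $\u_g$, so $\overline{\h}_g \le W$; running the mirror argument with subsolutions gives $\underline{\h}_g \ge W$ with the \emph{same} $W$ by uniqueness, and resolutivity follows without ever asserting $W \in \u_g$ or $W \in \l_g$.

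A second, smaller problem is in the approximation step. Functions in $W^{s,p}(\R^n)$ decay at infinity, so a non-decaying bounded continuous $g$ (take $g\equiv 1$) cannot be uniformly approximated by functions in $W^{s,p}(\R^n)\cap C(\R^n)$; your cutoff construction leaves an $L^\infty$ error comparable to $\|g\|_\infty$ no matter where the cutoff happens. Your global stability estimate $\sup_\Omega|\overline{\h}_{g_1}-\overline{\h}_{g_2}| \le \sup_{\R^n}|g_1-g_2|$, while correct, is therefore useless here: it sees only the uniform distance and cannot exploit that the perturbation lives far from $\Omega$. The paper avoids this by reducing only to $g$ smooth and bounded (not $W^{s,p}$), and then confronting that case directly via the obstacle device described above.
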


  That the Perron solutions coincide does not mean that they must attain the correct boundary values at all boundary points. (For example, in the range $sp < n$ isolated boundary points become ''ignored''.
  The situation is the same for the ordinary Laplace equation.) We can (for continuous $g$) drop the distinction and write \,$\mathfrak{h}_g = \overline{\mathfrak{h}}_g = \underline{\mathfrak{h}}_g$.\, We say that a given boundary point $\xi_0 \in \partial \Omega$ is \emph{regular}, if
  $$\lim\limits_{x\to\xi_0}\mathfrak{h}_g(x) \,=\,g(\xi_0)\quad \text{for all}\quad g\in C(\R^n)\cap L^{\infty}(\R^n).$$
  We show that \emph{the property of being regular does not depend on the right-hand side of the equation}, see the theorem below and Proposition \ref{propind}. Thus, to discriminate the regular boundary points, it is enough to consider the special case $\Lps v = 0.$ It is  surprising how simple the proof is, while the corresponding result for the $p$-Poisson equation in (\ref{glm}) requires, as it were,  the Wiener criterium, see \cite{Mal96} and \cite{MZ97}.
  \begin{thm}\label{ind} A  boundary point is either regular for all equations
    $$\Lps v(x) = -f(x) \qquad\text{where}\qquad  f\in L^\infty(\Omega)$$
  simultaneously or irregular for all of them. Here $s$ and $p$ are fixed.
  \end{thm}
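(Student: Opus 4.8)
The plan is to reduce the statement to a comparison between $\mathfrak{h}_g$ for a general $f\in L^\infty(\Omega)$ and $\mathfrak{h}^0_g$, the Perron solution of the homogeneous equation $\Lps v = 0$ with the same boundary data $g$. The key observation is that the difference $f-0$ only contributes a bounded source term, and that a bounded source can be absorbed by an auxiliary function that vanishes continuously at every boundary point. Concretely, I would fix a ball $B\supset\overline\Omega$ and let $w$ be the (unique, bounded, continuous up to $\partial\Omega$) solution of $\Lps w = -\|f\|_{L^\infty(\Omega)}$ in $\Omega$ with $w=0$ in $\R^n\setminus\Omega$; such a $w$ exists by the consistency part of the theory (it is the minimizer of \eqref{ett} with this constant right-hand side and zero exterior data), it is nonnegative by the comparison principle, and it tends to $0$ at every regular boundary point. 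The point is that $w$ is the \emph{same} barrier-type object regardless of $g$, and that $\pm w$ serves to sandwich the effect of switching the right-hand side.

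The main steps are as follows. First, I would show the two-sided bound
\[
\mathfrak{h}^0_g(x) - w(x)\ \le\ \mathfrak{h}_g(x)\ \le\ \mathfrak{h}^0_g(x) + w(x),\qquad x\in\Omega .
\]
For the upper inequality: if $v\in\u^0_g$ is a supersolution of $\Lps v=0$ with exterior data $g$, then $v+w$ is a supersolution of $\Lps(\,\cdot\,)=-\|f\|_\infty \le -f$ with the same exterior data (here one uses that adding a supersolution of $\Lps=-\|f\|_\infty$ to a supersolution of $\Lps=0$ yields a supersolution of $\Lps=-\|f\|_\infty$; this monotonicity/superadditivity is exactly the kind of manipulation permitted because $\u_g$ is defined through the weak comparison principle), hence $v+w\in\u_g$, so $\mathfrak{h}_g\le v+w$; taking the infimum over $v$ gives the claim. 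The lower inequality is symmetric, using subsolutions and $-w$. Second, at a point $\xi_0\in\partial\Omega$ that is regular for the homogeneous equation, we have $\mathfrak{h}^0_g(x)\to g(\xi_0)$; if in addition $\xi_0$ is regular for $\Lps=-\|f\|_\infty$ then $w(x)\to 0$, and the sandwich forces $\mathfrak{h}_g(x)\to g(\xi_0)$. Third, to close the argument I must also know that regularity of $\xi_0$ for the homogeneous equation \emph{implies} regularity for the equation with constant right-hand side $-\|f\|_\infty$; but that is again a special case of the sandwich with $f$ replaced by a constant and $\mathfrak{h}^0$ the reference — so really the whole theorem follows once one proves: \emph{$\xi_0$ regular for $\Lps v=0$ $\Longrightarrow$ $w(x)\to 0$ as $x\to\xi_0$}, where $w$ solves $\Lps w=-1$ with zero exterior data.

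That last implication is the crux, and it is where I expect the real work to be. One clean way: regularity of $\xi_0$ for the homogeneous problem means there is a barrier, i.e. a function $\beta$ that is a supersolution of $\Lps=0$ near $\xi_0$, is positive away from $\xi_0$, and satisfies $\beta(\xi_0)=0$ with $\liminf$-type control. Then $C\beta$ for large $C$ dominates $w$ near $\xi_0$: indeed $\Lps(C\beta)=C^{p-1}\Lps\beta\ge 0\ge -1=\Lps w$ in a neighbourhood, and on the complement of that neighbourhood $C\beta\ge \|w\|_{L^\infty(\Omega)}\ge w$ by choosing $C$ large (using that $\beta$ is bounded below by a positive constant there and $w$ is globally bounded), while in $\R^n\setminus\Omega$ both are $0$ or $C\beta\ge 0=w$; the comparison principle then yields $0\le w\le C\beta$ near $\xi_0$, and letting $x\to\xi_0$ gives $w(x)\to 0$. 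The obstacle is making the barrier characterization of regularity and the localization of the comparison principle rigorous in the nonlocal setting — one cannot simply restrict the equation to a small ball because the operator sees all of $\R^n$; the nonlocal tail has to be controlled, typically by splitting $\Lps$ into a local part on the ball plus a tail term that is bounded and can be folded into the right-hand side. I would handle this exactly as in the comparison-principle lemmas established earlier in the paper (Theorem \ref{central} and the machinery around Theorem \ref{Wiener}), so that the barrier argument runs with $-1$ replaced by $-1-(\text{tail bound})$, still a constant, and the conclusion is unaffected. Finally I would remark that the converse direction (irregular for the homogeneous $\Rightarrow$ irregular for some, hence all, $f$) is immediate by contraposition from the same sandwich, which completes the dichotomy.
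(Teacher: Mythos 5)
Your plan rests on the two-sided bound
\[
\mathfrak{h}^0_g(x) - w(x)\ \le\ \mathfrak{h}_g(x)\ \le\ \mathfrak{h}^0_g(x) + w(x),
\]
and you justify the upper inequality by asserting that if $\Lps v \le 0$ and $\Lps w \le -\|f\|_\infty$ then $\Lps(v+w)\le -\|f\|_\infty$, calling this ``superadditivity.'' This is the gap: the operator $\Lps$ is nonlinear for $p\ne 2$, and no such superadditivity holds. There is no pointwise or weak inequality of the form $\Lps(v+w)\le \Lps v + \Lps w$ for the fractional $p$-Laplacian, just as there is none for the ordinary $p$-Laplacian; indeed the sum of two $p$-superharmonic functions need not be $p$-superharmonic. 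The appeal to ``$\u_g$ is defined through the weak comparison principle'' does not rescue this, because the comparison principle orders a subsolution against a supersolution with ordered data and ordered right-hand sides — it never lets you add two supersolutions. So the sandwich does not follow, and with it the whole reduction collapses for $p\ne 2$. (Your later use of homogeneity, $\Lps(C\beta)=C^{p-1}\Lps\beta$, is fine; multiplication by constants is allowed, addition is not.)

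The paper sidesteps this entirely. It proves (Proposition \ref{propind}) that regularity for $-f$ is equivalent to regularity for the constant $-1$, and the two directions use only the comparison principle applied to solutions with the \emph{same} boundary data but ordered right-hand sides (which is Theorem \ref{thm:varcomp}, perfectly valid for the nonlinear operator), plus the genuinely nonlocal trick of Lemma \ref{nzrhs}: to turn a solution of $\Lps v=-1$ into a subsolution of $\Lps =-f$, one adds $M\eta$ where $\eta$ is supported \emph{far away} from $\Omega$. Because the added bump is far from $\Omega$, the modified function $\tilde v = v+M\eta$ agrees with $v$ near $\partial\Omega$, yet the nonlocal tail contribution shifts $\Lps\tilde v$ by a controllable amount. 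This exploits nonlocality instead of linearity and makes the argument ``surprisingly simple,'' as the introduction remarks. If you want to salvage your approach, you would have to replace the additive sandwich by this far-away modification; as written, the proposal would only work in the linear case $p=2$.
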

If $sp > n$ then the Sobolev embedding guarantees the right boundary values, so that all boundary points are regular. We pay attention to the case $sp<n$. \emph{Regularity is equivalent to the existence of a \emph{barrier}}, an auxiliary function, see our Theorem \ref{barrierthm}. An interesting consequence, pointed out by Lebesgue for the Laplace equation, is that the more complement around a boundary point the domain has, the better  the regularity is:

  \begin{prop}\label{Lebesgue} Suppose that we have two domains so that $\Omega_1 \subset \Omega_2$ and a common boundary point $\xi_0 \in \partial \Omega_1 \cap \partial \Omega_2.$ If $\xi_0$ is regular with respect to $\Omega_2$, so it is with respect to $\Omega_1$.
  \end{prop}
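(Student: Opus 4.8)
\emph{Proof proposal.} The plan is to run the classical argument of Lebesgue, but phrased through barriers, since the paper already provides the two facts that make this painless: by Theorem~\ref{ind} (and Proposition~\ref{propind}) regularity does not depend on the right-hand side, so it suffices to treat the homogeneous equation $\Lps v = 0$; and by Theorem~\ref{barrierthm} regularity of $\xi_0$ is equivalent to the existence of a barrier at $\xi_0$. Thus, assuming $\xi_0$ regular with respect to $\Omega_2$, fix a barrier $w$ at $\xi_0$ for $\Omega_2$; the goal is to show that (essentially) the same function $w$ is a barrier at $\xi_0$ for $\Omega_1$, whence Theorem~\ref{barrierthm}, applied now to $\Omega_1$, yields the conclusion.

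The only thing to check is that each defining property of a barrier survives the passage from $\Omega_2$ to the smaller domain $\Omega_1$. The crucial one is the supersolution property. Because $\Omega_1 \subset \Omega_2$ we have the inclusion of test-function classes $C_0^\infty(\Omega_1) \subset C_0^\infty(\Omega_2)$, and the weak supersolution inequality
$$\int_{\R^n}\!\int_{\R^n} \frac{|w(y)-w(x)|^{p-2}\bigl(w(y)-w(x)\bigr)\bigl(\phi(y)-\phi(x)\bigr)}{|y-x|^{n+sp}}\,dx\,dy \,\ge\, 0,$$
which holds for every nonnegative $\phi \in C_0^\infty(\Omega_2)$, therefore holds a fortiori for every nonnegative $\phi \in C_0^\infty(\Omega_1)$; note that the double integral is taken over all of $\R^n\times\R^n$ and is not altered by the change of domain. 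Hence $w$ is still a weak supersolution, now in $\Omega_1$. The remaining requirements — continuity at $\xi_0$ with $w(\xi_0)=0$, positivity of $w$ on $\overline{\Omega_1}\setminus\{\xi_0\}$ with the quantitative lower bound off neighbourhoods of $\xi_0$, and whatever local integrability/Sobolev membership is built into the definition — are all inherited directly from the corresponding statements for $\Omega_2$, since $\overline{\Omega_1}\setminus\{\xi_0\}\subset\overline{\Omega_2}\setminus\{\xi_0\}$ and, by hypothesis, $\xi_0\in\partial\Omega_1$ as well.

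The point that deserves the most care — and the only place where the non-local nature of $\Lps$ could cause worry — is precisely that ``being a supersolution in $\Omega$'' might seem to depend on $\Omega$ through more than the admissible test functions, because $\Lps w$ senses the values of $w$ throughout $\R^n$. The resolution, already carried out above, is that the defining bilinear form is untouched by shrinking $\Omega$; only the family of test inequalities shrinks, and a subfamily of valid inequalities stays valid. Once this is noted, the argument is the familiar one: extra complement around $\xi_0$ only helps, and here transplanting the barrier from $\Omega_2$ to $\Omega_1$ costs nothing. (One could equally well argue directly with the Perron classes, but the barrier route has the advantage that a single auxiliary function controls both $\overline{\mathfrak{h}}_g$ and $\underline{\mathfrak{h}}_g$ for every $g\in C(\R^n)\cap L^\infty(\R^n)$, which is exactly what the definition of a regular point demands.)
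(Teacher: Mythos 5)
Your proposal is correct and is exactly the argument the paper intends: the paper states the proposition "follows immediately" after Theorem~\ref{barrierthm}, and the implicit mechanism is precisely the one you spell out — a barrier at $\xi_0$ for $\Omega_2$ is automatically a barrier at $\xi_0$ for $\Omega_1$, because the supersolution inequality is tested against $C_0^\infty(\Omega_1)\subset C_0^\infty(\Omega_2)$ while the double integral and the remaining barrier conditions are domain-independent. The only superfluous step is the preliminary reduction to $f=0$ via Theorem~\ref{ind}: since $\Omega_1\subset\Omega_2$, a weak supersolution of $\Lps\gamma=-f$ in $\Omega_2$ is already a weak supersolution of $\Lps\gamma=-f$ in $\Omega_1$ for the same $f$, so one can transplant the barrier directly without first passing to the homogeneous equation.
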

  We do not treat the  boundary regularity any further. A finer analysis would require more elaborate tools. Neither do we introduce concepts like  ''$(s,p)$-capacity.'' The merit of our treaty is that also for arbitrary domains we obtain results, even for right-hand sides $f\not \equiv 0$, comparable to corresponding parts in classical Potential Theory. Needless to say, the method itself is not new. We can partly follow the procedure  in \cite{GLM}, which was outlined for equations like ($\ref{glm}$),  and for the Resolutivity Theorem we use some ideas from \cite{LM}. As always, some proofs are straightforward adaptations, but also new, even surprising, difficulties appear. Although, the focus is on the boundary, the exterior values ($\R^n\setminus\overline{\Omega}$) might interfere in a disturbing way, so that a slight change of $g$ far away from $\Omega$ may effect how the "correct" boundary values are attained. On the other hand, sometimes the exterior  helps, as in the surprisingly simple proof of Theorem \ref{ind}. Points in the exterior are always regular, without any requirement on $\dd\Omega$, as expected.
  
  Finally, we mention that the lack of suitable, explicitly known, strict supersolutions has lead us to include a chapter of calculations; in particular the case $sp\leq 1$ is demanding, since the general theory of the Sobolev Spaces $W^{s,p}$ is of little help in this range.
  
  There are several sophisticated problems that are beyond the reach of our present methods.\\

\noindent \emph{Acknowledgements: }T. Kuusi has kindly informed us that the latest version of the manuscript \cite{CKKP15} treats Perron's method even for unbounded boundary values, though restricted to  equations with zero right-hand side. We thank him for this piece of information. The first author is supported by the Swedish Research Council, grant no. 2012-3124. The second author is supported in part by the Research Council of Norway, grant \emph{Waves and Nonlinear Phenomena} (WaNP).

\section{Preliminaries}

In this section we shall present some background.  Preliminary results are also included.
\paragraph{Spaces and notation.}
The fractional Sobolev spaces $W^{s,p}(\R^n)$ with $0<s<1$ and $1<p<\infty$ are assumed to be known by the reader. We refer to "Hitchhiker's guide to the fractional Sobolev spaces" \cite{NPV12} for a good introduction. The norm is defined through
$$\|u\|^p_{W^{s,p}(\R^n)}=\int_{\R^n}\!\!\int_{\R^n} \frac{|u(y)-u(x)|^{p}}{|y-x|^{ sp+n}} d x d y+\int_{\R^n}|u|^p d x.
$$
The space $W^{s,p}_0(D)$ is defined as the closure of $C_0^\infty(D)$ with respect to the norm of $W^{s,p}(\R^n)$. We also note that if $u$ belongs to  $W^{s,p}(\R^n)$,  so do $u_+,\,u_-$, and $|u|.$ The same holds with  $W^{s,p}_0(D)$.

\begin{rem} 
 In general, it is not the same to take the closure in  the norm with the double integral taken only over a subset $D:$
\begin{equation*}\label{Gagliardo}
 [u]^p_{W^{s,p}(D)}=\int_{D\,}\!\!\int_{D} \frac{|u(y)-u(x)|^{p}}{|y-x|^{ sp+n}} d x d y
 \end{equation*}
  However, when $sp\neq 1$ and  $D$ has Lipschitz boundary, the resulting space is the same, see Proposition B.1 in \cite{BLP14}.
 \end{rem}

\begin{lemma}\label{lem:sob0}
Suppose $0\leq \eta\leq \phi$ where $\eta\in W^{s,p}(B)$ and $\phi\in W_0^{s,p}(B)$ where $B$ is a ball. Then $f\in W_0^{s,p}(B)$ as well. 
\end{lemma}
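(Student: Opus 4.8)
The plan is to realize $\eta$ as the limit (in the $W^{s,p}(\R^n)$ norm) of a sequence of functions in $C_0^\infty(B)$, using the function $\phi$ only to control the ``tail'' of $\eta$ near $\partial B$. Since $\phi \in W_0^{s,p}(B)$, pick $\psi_j \in C_0^\infty(B)$ with $\psi_j \to \phi$ in $W^{s,p}(\R^n)$; by replacing $\psi_j$ with $(\psi_j)_+$ we may assume $\psi_j \ge 0$, and these truncations still converge since truncation is continuous on $W^{s,p}$. Now consider the truncated functions $\eta_j := \min(\eta, \psi_j)$. Each $\eta_j$ is supported in $\{\psi_j > 0\} \subset\subset B$ because $0 \le \eta_j \le \psi_j$, and $\eta_j \in W^{s,p}(\R^n)$ as the minimum of two $W^{s,p}$ functions. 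The key claim is that $\eta_j \to \eta$ in $W^{s,p}(\R^n)$; granting this, $\eta \in W_0^{s,p}(B)$ because each $\eta_j$, being a compactly-supported $W^{s,p}$ function inside $B$, lies in $W_0^{s,p}(B)$ (mollify it), and $W_0^{s,p}(B)$ is closed.

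To prove $\eta_j \to \eta$, write $\eta - \eta_j = \eta - \min(\eta,\psi_j) = (\eta - \psi_j)_+$. Since $0 \le \eta \le \phi$, on the set where $\eta > \psi_j$ we have $0 \le \eta - \psi_j \le \phi - \psi_j$, hence $0 \le (\eta - \psi_j)_+ \le (\phi - \psi_j)_+ \le |\phi - \psi_j|$ pointwise. This gives control of the $L^p$ norm: $\|\eta - \eta_j\|_{L^p} \le \|\phi - \psi_j\|_{L^p} \to 0$. For the Gagliardo seminorm one estimates, for a.e. $x,y$,
\begin{equation*}
\bigl| (\eta-\psi_j)_+(x) - (\eta-\psi_j)_+(y) \bigr| \le \bigl| (\eta - \psi_j)(x) - (\eta - \psi_j)(y) \bigr|,
\end{equation*}
using that $t \mapsto t_+$ is $1$-Lipschitz; but this bounds the seminorm of $\eta-\eta_j$ by that of $\eta - \psi_j$, which does \emph{not} tend to zero unless $\eta \in W^{s,p}_0(B)$ already. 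So a more careful argument is needed: split the double integral according to whether each of $x,y$ lies in $A_j := \{\eta > \psi_j\}$ or not, use $(\eta-\psi_j)_+ = 0$ off $A_j$, and on $A_j$ bound the integrand by $|(\phi - \psi_j)(x) - (\phi-\psi_j)(y)|$ plus terms involving $\phi - \eta \ge 0$ that are supported where $\phi$ is close to $\psi_j$. Since $|A_j| \to 0$ (as $\psi_j \to \phi \ge \eta$ in measure) and $\eta \in W^{s,p}$, absolute continuity of the integral $\int\!\int \frac{|\eta(x)-\eta(y)|^p}{|x-y|^{n+sp}}$ over shrinking sets kills the remaining pieces.

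The main obstacle is exactly this last seminorm estimate: unlike the $L^p$ part, the nonlocal seminorm does not split cleanly, and one cannot simply dominate $[\eta - \eta_j]_{W^{s,p}}$ by $[\phi - \psi_j]_{W^{s,p}}$. The right way around it is to avoid subtracting $\psi_j$ from $\eta$ directly and instead use the structure $\eta - \eta_j = (\eta - \psi_j)_+$ together with the fact that this function vanishes outside the small set $A_j$; one then shows $[\eta - \eta_j]_{W^{s,p}(\R^n)}^p$ is bounded by a constant times $[\eta]_{W^{s,p}(A_j \times \R^n)}^p + [\phi - \psi_j]_{W^{s,p}(\R^n)}^p + \||A_j|\text{-localized terms}\|$, each summand going to $0$. (This is the standard ``cut-off by a converging sequence'' argument for fractional Sobolev spaces; one could alternatively cite Lemma~\ref{lem:sob0}-type statements from the references, e.g.\ the appendix of \cite{BLP14}, but it is short enough to include directly.) Note also there is a typo in the statement: the conclusion should read ``$\eta \in W_0^{s,p}(B)$'', not ``$f$''.
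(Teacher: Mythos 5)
Your truncation idea, $\eta_j := \min(\eta,\psi_j)$ with $\psi_j \in C_0^\infty(B)$, $\psi_j \to \phi$, is essentially the same device as in the paper (the paper instead truncates a smooth approximation $\eta_i$ of $\eta$ by a smooth approximation $\phi_i$ of $\phi$, setting $\psi_i = \min(\phi_i,\eta_i)$, but this difference is cosmetic). Where you go wrong is in insisting on \emph{strong} convergence $\eta_j \to \eta$ in $W^{s,p}(\R^n)$.

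Two concrete problems with your strong-convergence patch. First, the claim ``$|A_j|\to 0$'' is false in general: take $\eta=\phi$. Then $A_j=\{\phi>\psi_j\}$, and since $\psi_j$ has compact support in $B$ while $\phi$ need not, and since even where both are supported $\psi_j<\phi$ on a set of measure bounded away from zero for many $j$, there is no reason for $|A_j|$ to shrink. Second, even when $|A_j|$ is small, the Gagliardo seminorm of a function supported on a small set is \emph{not} controlled by an absolute-continuity argument for the double integral; the ``cross'' contribution from $A_j\times(\R^n\setminus A_j)$ does not vanish just because $|A_j|$ is small. So your final paragraph does not close the gap you correctly identified in the middle of your write-up.

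The paper sidesteps all of this by asking only for \emph{weak} convergence. Your $\eta_j$ are bounded in $W^{s,p}(\R^n)$ (because $\min$ is $1$-Lipschitz in each coordinate, so $[\eta_j]_{W^{s,p}} \le [\eta]_{W^{s,p}} + [\psi_j]_{W^{s,p}}$), and $\eta_j\to\eta$ in $L^p$ (again by the Lipschitz property of $\min$, since $\min(\eta,\phi)=\eta$). Boundedness plus $L^p$-convergence gives $\eta_j\rightharpoonup\eta$ weakly in $W^{s,p}(\R^n)$. Each $\eta_j$ is a $W^{s,p}$-function with compact support in $B$, hence lies in $W_0^{s,p}(B)$ (mollify, as you noted). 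Since $W_0^{s,p}(B)$ is a closed linear subspace, it is weakly closed by Mazur's theorem, and so $\eta\in W_0^{s,p}(B)$. This is exactly the paper's argument. In short: your construction is right, but you should stop at weak convergence rather than attempt the strong-convergence estimate, which as your own intermediate remark shows, is false without further hypotheses.
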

 \begin{proof} From the hypotheses it follows that we can find $\phi_i\in C_0^\infty(B)$ and $0\leq \eta_i\in W^{s,p}(\R^n)$ such that $\phi_i\to \phi$ and $\eta_i\to \eta$ in $W^{s,p}(\R^n)$. Consider the sequence
$$
\psi_i = \min(\phi_i,\eta_i)\in C_0^{0,1}(B).
$$
It is clear that $\psi_i\to \eta$ strongly in $L^p$ and weakly in $W^{s,p}(\R^n)$. Since $W_0^{s,p}(B)$ is closed under weak convergence, it follows that $\eta\in W_0^{s,p}(B)$. 
\end{proof}

\begin{lemma}\label{sob1d} Let $sp>1$ and $f\in W^{s,p}(\R^n)$ be a radial function. Then $f$ is continuous outside the origin.
\end{lemma}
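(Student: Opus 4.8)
\section*{Proof proposal}

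The plan is to reduce the claim to a one-dimensional statement and then invoke the fractional Morrey embedding. Write $f(x)=u(|x|)$ for a measurable function $u$ on $(0,\infty)$, determined up to a null set (this is what being radial means; one may take $u(r)$ to be the spherical mean of $f$ over $\{|x|=r\}$). It suffices to show that for each $0<a<b<\infty$ one has $u\in L^p((a,b))$ together with $\int_a^b\int_a^b |u(r)-u(\rho)|^p\,|r-\rho|^{-1-sp}\,dr\,d\rho<\infty$, i.e.\ $u\in W^{s,p}((a,b))$. Indeed, since $sp>1$ the fractional Morrey embedding on the interval $(a,b)$ (see \cite{NPV12}) then gives a representative of $u$ in $C^{0,\,s-1/p}([a,b])$; letting $a\downarrow0$ and $b\uparrow\infty$ shows that $f$ agrees a.e.\ with a function continuous on $\R^n\setminus\{0\}$. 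For $n=1$ this is the embedding applied directly (radial $=$ even), so assume $n\ge2$.

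To control $u$ on $(a,b)$ I would bound its one-dimensional Gagliardo energy by that of $f$. Passing to polar coordinates $x=r\omega$, $y=\rho\sigma$ with $r,\rho>0$, $\omega,\sigma\in S^{n-1}$, and keeping only the annular part of the double integral,
\begin{equation*}
[f]^p_{W^{s,p}(\R^n)}\;\ge\;\int_a^b\!\!\int_a^b |u(r)-u(\rho)|^p\, r^{n-1}\rho^{n-1}\Big(\int_{S^{n-1}}\!\!\int_{S^{n-1}}\frac{d\omega\,d\sigma}{|r\omega-\rho\sigma|^{n+sp}}\Big)\,dr\,d\rho .
\end{equation*}
The crux is the lower bound
\begin{equation*}
\int_{S^{n-1}}\!\!\int_{S^{n-1}}\frac{d\omega\,d\sigma}{|r\omega-\rho\sigma|^{n+sp}}\;\ge\;\frac{c(a,b,n,s,p)}{|r-\rho|^{1+sp}}\qquad\text{for }r,\rho\in(a,b),
\end{equation*}
which says that the double spherical average regenerates exactly the one-dimensional singularity $|r-\rho|^{-1-sp}$. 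For $|r-\rho|\ge a/2$ both sides are bounded above and below by positive constants, since $|r-\rho|\le|r\omega-\rho\sigma|\le 2b$, so this range is trivial. For $|r-\rho|<a/2$ I would use $|r\omega-\rho\sigma|^2=(r-\rho)^2+2r\rho(1-\cos\theta)$, where $\theta$ is the angle between $\omega$ and $\sigma$; since $r\rho$ is bounded above and below on the annulus, $|r\omega-\rho\sigma|\le 2|r-\rho|$ holds on a spherical cap of $\sigma$'s of angular radius $\gtrsim|r-\rho|$, hence of measure $\ge c(a,b,n)\,|r-\rho|^{n-1}$. Restricting the inner integral to that cap gives $\int_{S^{n-1}}|r\omega-\rho\sigma|^{-n-sp}\,d\sigma\ge c\,|r-\rho|^{n-1}(2|r-\rho|)^{-n-sp}$, and integrating over $\omega\in S^{n-1}$ yields the displayed bound.

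Plugging this into the previous inequality and using $r^{n-1}\rho^{n-1}\ge a^{2(n-1)}$ on the annulus gives $[u]^p_{W^{s,p}((a,b))}\le C(a,b,n,s,p)\,[f]^p_{W^{s,p}(\R^n)}<\infty$; together with the elementary estimate $\|u\|^p_{L^p((a,b))}\le (|S^{n-1}|\,a^{n-1})^{-1}\|f\|^p_{L^p(\{a<|x|<b\})}$ this shows $u\in W^{s,p}((a,b))$, which is all that was needed. I expect the main obstacle to be precisely the angular estimate above — verifying that the double integral over $S^{n-1}\times S^{n-1}$ of $|r\omega-\rho\sigma|^{-n-sp}$ recovers the sharp weight $|r-\rho|^{-1-sp}$; once that is in hand, the remainder is bookkeeping together with the standard one-dimensional Morrey embedding.
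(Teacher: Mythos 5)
Your proposal is correct and follows essentially the same route as the paper: the paper also reduces to a one-dimensional Gagliardo seminorm by restricting the double integral to the set $K(x)=\{y:|x-y|\le 2\,||x|-|y||\}$, which for fixed $|y|=\rho$ is exactly a spherical cap of angular radius $\sim|r-\rho|$ and measure $\sim|r-\rho|^{n-1}$, reproducing the one-dimensional kernel $|r-\rho|^{-1-sp}$, and then invokes the one-dimensional Sobolev embedding for $sp>1$. Your write-up of the cap estimate is in fact a bit more explicit than the paper's (which leaves the angular integration implicit and has an apparent typo in the exponent), but the underlying idea is identical.
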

 \begin{proof} It is enough to prove that $f$ is continuous when $1/2<|x|<2$. Denote by $K(x)$ the set consisting of those $y$ so that
$$
|x-y|\leq 2 |x-y_x|=2 ||x|-|y||,
$$
where $y_x$ is the closest point to $x$ such that $|y_x|=|y|$. Then
\begin{align*}
\infty&>\int_{\R^n}\int_{\R^n}\frac{|f(x)-f(y)|^p}{|x-y|^{n+sp}} dy dx\\
& \geq \int_{x\in B_2\setminus B_\frac12}\int_{y\in K(x)\cap B_2\setminus B_\frac12} \frac{|f(x)-f(y)|^p}{|x-y|^{n+sp}}dy dx \\
 &\geq 2^{-n-sp}\int_{x\in B_2\setminus B_\frac12}\int_{y\in K(x)\cap B_2\setminus B_\frac12} \frac{|f(|x|)-f(|y|)|^p}{|x-y_x|^{n+sp}}dy dx\\
 &\geq C\int_{\frac12}^2\int_{\frac12}^2\frac{|f(r)-f(\rho)|^p}{|r-\rho|^{n+sp}}r^{n-1}\rho^{n-1} dr d\rho\\
 &\geq C'\int_{\frac12}^2\int_{\frac12}^2\frac{|f(r)-f(\rho)|^p}{|r-\rho|^{n+sp}}r^{n-1}\rho^{n-1} dr d\rho\\
 &=C'[f(r)]^p_{W^{s,p}([1/2,2])}.
\end{align*}
This means that $f$ seen as a one variable function is in $W^{s,p}([1/2,2])$ and must, therefore, be continuous by Sobolev's embedding if $sp>1$.

 \end{proof}
\paragraph{Notion of solutions.}
To be on the safe side, we define some basic concepts.
\begin{definition}\label{varsol}
We say that a function $u\in\ww$ is a weak  \emph{supersolution} of 
\begin{equation}
\label{eq:main}
\Lps u = -f\text{ in $\Omega$}\\
\end{equation}
if
$$
\int_{\R^n}\!\!\int_{\R^n}\frac{|u(x)\!-\!u(y)|^{p-2}(u(x)\!-\!u(y))(\phi(x)\!-\!\phi(y))}{|x-y|^{n+sp}} dx dy \geq  \int_\Omega f\phi\,dx
$$
for every nonnegative $\phi\in W^{s,p}_0(\Omega)$. A weak \emph{subsolution} is defined by reversing the inequality.

A weak \emph{solution} is a function $u\in \ww$ satisfying
$$
\int_{\R^n}\!\!\int_{\R^n}\frac{|u(x)\!-\!u(y)|^{p-2}(u(x)\!-\!u(y))(\phi(x)\!-\!\phi(y))}{|x-y|^{n+sp}} dx dy = \int_\Omega f\phi\,dx
$$
for every $\phi\in W^{s,p}_0(\Omega)$.
\end{definition}
We shall also need the corresponding local concepts. By a {local weak solution} of the equation one means  a function $u  \in W^{s,p}_{loc}(\Omega) \cap L^{\infty}(\R^n)$ which  satisfies the above equation for all test functions\footnote{One can also allow test functions in $W^{s,p}(K)$ vanishing outside a compact set $K'$, where $K'\subset\subset K\subset\subset \Omega$.} $\phi \in C_0^{\infty}(\Omega)$. The local weak sub- and supersolutions have a similar definition. By regularity theory a (local) weak solution is continuous in $\Omega$, upon a change in a set of measure zero, see Theorem 1.2 in \cite{CKP15b}, Theorem 3.13 in \cite{BP14} (basically Theorem 1.5 in \cite{KMS15}), Theorem 1 in \cite{Lin14} or Theorem 5.4 in \cite{IMS15}, all in slighly different settings. Here, Theorem 5.4 in \cite{IMS15} will do. We shall always assume this continuity. Furthermore, (local) weak supersolutions can be made  lower semicontinuous in $\Omega$ by changing them in a set of measure zero. See \cite{CKKP15} for this regularity result. In the same way, (local) subsolutions are upper semicontinuous\footnote{In \cite{CKKP15}, only a zero right hand side is allowed. See Lemma \ref{nzrhs} how one can arrange for this situation.}.
As a consequence, the functions are defined at every point. In addition, in Corollary 3.6 in \cite{BP14},  the following Caccioppoli inequality for solutions of
$$
\Lps u = -f \quad \text{in } B_R,
$$
is proved:
\begin{align}
\label{cacc}
&\int_{B_r}\int_{B_r} \frac{|u(x)-u(y)|^p}{|x-y|^{n+sp}} dx dy \leq\\
& C\left(\int_{B_R} u^p dx+\left[\left(\int_{\R^n\setminus B_R} \frac{|u(x)|^{p-1}}{|x|^{n+sp}} dx\right)^\frac{1}{p-1}\right]^p+\|f\|_{L^{p*'}(B_R)}^\frac{p}{p-1}\right),\nonumber 
\end{align}
where $C=C(N,p,R,r)$.

We also seize the opportunity to mention the recent papers \cite{CLM12} \cite{KMS15b} where non-local equations of type \eqref{eq:mainn} have been studied. In addition, in the papers \cite{BCF12a}, \cite{BCF12b} and \cite{CJ15} different types of non-local nonlinear equations are introduced.
\paragraph{Existence.} The existence of solutions comes easily from the variational integral $J(v)$ defined in (\ref{ett}).

\begin{thm} \label{thm:varex} Given $g\in \ww$, there is a unique minimizer $u$ of the variational integral $J(v)$ 
in the class $$v\in \ww,\quad v-g\in \wwz.$$ The minimizer is a weak solution in $\Omega$.
\end{thm}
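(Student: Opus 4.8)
The plan is to use the direct method in the calculus of variations. First I would establish that the functional $J$ is well defined and bounded below on the admissible class $\mathcal{A} = \{v \in W^{s,p}(\R^n) : v - g \in W^{s,p}_0(\Omega)\}$. For this, write $v = g + w$ with $w \in W^{s,p}_0(\Omega)$ and note that the Gagliardo seminorm controls things: by the fractional Poincar\'e inequality on the bounded domain $\Omega$ one has $\|w\|_{L^p(\Omega)} \le C[w]_{W^{s,p}(\R^n)}$, and the linear term $\int_\Omega f v\,dx$ is dominated by $\|f\|_{L^\infty}\|v\|_{L^1(\Omega)} \le C\|f\|_{L^\infty}(1 + [w]_{W^{s,p}(\R^n)})$. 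Since the leading term grows like $[w]^p$ with $p>1$, this beats the linear term and gives coercivity: $J(v) \to \infty$ as $[w]_{W^{s,p}(\R^n)} \to \infty$, and $\inf_{\mathcal{A}} J > -\infty$.

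Next I would take a minimizing sequence $v_j = g + w_j$. Coercivity forces $[w_j]_{W^{s,p}(\R^n)}$ to be bounded, hence (again by Poincar\'e) $\|w_j\|_{W^{s,p}(\R^n)}$ is bounded, so a subsequence converges weakly in $W^{s,p}(\R^n)$ to some $w$, and since $W^{s,p}_0(\Omega)$ is weakly closed we get $w \in W^{s,p}_0(\Omega)$, i.e. $v := g + w \in \mathcal{A}$. The map $v \mapsto \frac{1}{p}[v]^p_{W^{s,p}(\R^n)}$ is convex and strongly continuous on $W^{s,p}(\R^n)$, hence weakly lower semicontinuous; the linear term $v \mapsto \int_\Omega f v\,dx$ is weakly continuous (using compact embedding of $W^{s,p}(\Omega)$ into $L^p(\Omega)$, or directly since it is a bounded linear functional). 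Therefore $J(v) \le \liminf_j J(v_j) = \inf_{\mathcal{A}} J$, so $v$ is a minimizer. Uniqueness follows from strict convexity: the functional $w \mapsto [g+w]^p_{W^{s,p}(\R^n)}$ is strictly convex in $w$ (the underlying Gagliardo form is strictly convex in the difference quotients for $p > 1$), while the linear term is affine, so two distinct minimizers would contradict strict convexity along the segment joining them.

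Finally, to see the minimizer is a weak solution, I would perform the standard first-variation computation: for fixed $\phi \in W^{s,p}_0(\Omega)$ the function $t \mapsto J(u + t\phi)$ is $C^1$ and convex, minimized at $t=0$, so its derivative vanishes there. Differentiating under the integral sign (justified by dominated convergence, using that $||a+tb|^p|' = p|a+tb|^{p-2}(a+tb)b$ and Hölder to bound the integrand uniformly for small $t$) yields exactly
\[
\int_{\R^n}\!\!\int_{\R^n}\frac{|u(x)-u(y)|^{p-2}(u(x)-u(y))(\phi(x)-\phi(y))}{|x-y|^{n+sp}}\,dx\,dy = \int_\Omega f\phi\,dx,
\]
which is the definition of weak solution in Definition \ref{varsol}.

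The main obstacle I anticipate is the justification of differentiating the Gagliardo energy under the integral sign and the weak lower semicontinuity of the nonlocal term — both are routine in principle, but one must be a little careful because the double integral is over all of $\R^n \times \R^n$ (not just $\Omega \times \Omega$), so the finiteness of $[g]_{W^{s,p}(\R^n)}$ (guaranteed by $g \in W^{s,p}(\R^n)$) is doing essential work in controlling the "tail" contributions. A secondary technical point is the fractional Poincar\'e inequality on a bounded but otherwise arbitrary domain; since the functions in question vanish (in the $W^{s,p}_0$ sense) outside $\Omega$, the inequality $\|w\|_{L^p(\Omega)} \le C(\diam\Omega)^s [w]_{W^{s,p}(\R^n)}$ holds with no regularity assumption on $\partial\Omega$, which is exactly what is needed for the generality claimed in the introduction.
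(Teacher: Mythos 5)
Your proposal is correct and takes essentially the same route as the paper: the direct method (coercivity via the fractional Poincar\'e inequality, weak lower semicontinuity of the convex energy), uniqueness from strict convexity of the Gagliardo seminorm on $W^{s,p}_0(\Omega)$, and the weak form via first variation. The paper states these three steps tersely; you have simply supplied the standard details.
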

\begin{proof}
  The existence of a minimizer $u$ is standard and follows by the direct method in the calculus of variations. Since the semi-norm is strictly convex, the minimizer is unique. The Euler-Lagrange equation follows from  
$$
\frac{d\,\,}{dt}J(u+t\phi)\Big|_{t=0} = 0, \quad \phi\in W^{s,p}_0(\Omega).
$$
This is the desired equation.
\end{proof}

\begin{rem} In the literature the minimization is sometimes taken over all those $u \in W^{s,p}(\Omega)$ for which  $u=g$  \emph{almost} everywhere in the complement $\R^n\setminus\Omega.$ This will not do here.  Even if $g$ were smooth, this does \emph{not} always yield the same solution in irregular domains. For example, the punctured disk $0 < |x| < 1$ has the boundary point $0,$ which cannot be ignored when $sp >n.$ However, if $\Omega$ has a Lipschitz boundary, these two problems are equivalent, see Proposition B.1 in \cite{BPS16}.
\end{rem}

\paragraph{Comparison Principle.} For two equations
$$\Lps u = -f_1, \qquad \Lps v = -f_2$$
a comparison principle can be formulated, provided that, say, $f_1 \leq f_2 .$ 

\begin{thm}[Variational comparison] \label{thm:varcomp} Let  $u$  and $v$ be two  functions belonging to $W^{s,p}(\R^n).$ Let   $\Omega$ be a bounded domain. If \begin{itemize}
  \item  $v \geq u$ in $\R^n\setminus \Omega$ in the sense that $(u-v)_+\in \wwz$, and
  \item $\Lps v(x) \leq \Lps u(x) \quad\text{when }\, x\in \Omega$ in the sense that
    \begin{align*}
      &\int\limits_{\R^n}\!\!\int\limits_{\R^n}\frac{|v(y)\!-\!v(x)|^{p-2}(v(y)\!-\!v(x) )(\phi(y)-\phi(x))}{|x-y|^{n+sp}}\, dx\, dy \\&\geq  
\int\limits_{\R^n}\!\!\int\limits_{\R^n}\frac{|(u(y)\!-\!u(x)|^{p-2}(u(y)\!-\!u(x)) (\phi(y)-\phi(x))}{|x-y|^{n+sp}}\, dx\, dy 
    \end{align*}
    for all test functions $\phi \geq 0,\, \phi \in W_0^{s,p}(\Omega),$
  \end{itemize}
  then $v \geq u$ also in $\Omega.$ That is, $ v \geq u$ in $\R^n$.
\end{thm}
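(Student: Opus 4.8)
The plan is to test the weak formulations with the natural candidate $\phi = (u-v)_+$, which by hypothesis lies in $\wwz$ and is nonnegative, hence admissible in both inequalities. Subtracting the inequality for $v$ from the one for $u$ (using the second hypothesis in the direction $\Lps v \le \Lps u$), we obtain
$$
\int\limits_{\R^n}\!\!\int\limits_{\R^n}\frac{\bigl(A(u(y),u(x)) - A(v(y),v(x))\bigr)\bigl(\phi(y)-\phi(x)\bigr)}{|x-y|^{n+sp}}\,dx\,dy \le 0,
$$
where I write $A(a,b) = |a-b|^{p-2}(a-b)$ for brevity. The goal is to show this forces $\phi \equiv 0$, i.e.\ $u \le v$ a.e.\ in $\R^n$.

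The heart of the matter is a pointwise algebraic inequality for the integrand. Write $U = u(y)-u(x)$, $V = v(y)-v(x)$, and note that $\phi(y)-\phi(x) = (u-v)_+(y) - (u-v)_+(x)$. On the set where both $u(x)>v(x)$ and $u(y)>v(y)$ one has $\phi(y)-\phi(x) = U - V$, and the monotonicity of $t\mapsto|t|^{p-2}t$ gives $(A(u(y),u(x))-A(v(y),v(x)))(U-V) \ge 0$, with strict positivity unless $U=V$. On the set where, say, $u(x)>v(x)$ but $u(y)\le v(y)$, one checks directly that $(A(U_{\cdot})-A(V_{\cdot}))(\phi(y)-\phi(x)) \ge 0$ as well: here $\phi(y)-\phi(x) = -(u(x)-v(x)) < 0$, while $U - V = (u(y)-v(y)) - (u(x)-v(x)) \le -(u(x)-v(x)) = \phi(y)-\phi(x) < 0$, so the sign works out by the same monotonicity (comparing $A(U)$ and $A(V)$ when $U \le \phi(y)-\phi(x)$ relative to $V$). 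Symmetrically for the roles of $x,y$ swapped, and on the set where $u\le v$ at both points the integrand vanishes. Hence the integrand is nonnegative everywhere; combined with the displayed inequality $\le 0$, it must vanish a.e.

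Finally, from vanishing of the integrand I conclude $\phi$ is constant: on the ``diagonal'' region where both points are in $\{u>v\}$ we get $u(y)-u(x) = v(y)-v(x)$, i.e.\ $u-v$ is (locally) constant there, and the off-diagonal regions force that constant to be $0$ (since $\phi(y)-\phi(x) = -(u(x)-v(x))$ must vanish whenever $x\in\{u>v\}$, $y\notin\{u>v\}$, a situation that occurs unless $\{u>v\}$ is empty or all of $\R^n$; boundedness of $\Omega$ and $\phi\in\wwz$ rule out the latter). Therefore $(u-v)_+ = 0$ a.e., which is the claim $v \ge u$ in $\R^n$. The main obstacle is the careful bookkeeping of signs in the mixed region $\{u(x)>v(x)\}\cap\{u(y)\le v(y)\}$ to verify the integrand is genuinely nonnegative there; everything else is monotonicity of the $p$-Laplacian nonlinearity plus the fact that $(u-v)_+$ is an admissible test function, which is exactly what the first hypothesis grants.
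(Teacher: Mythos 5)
Your proof is correct and uses the same basic device the paper (via its citation) relies on: test both weak formulations with $\phi=(u-v)_+$, admissible by the first hypothesis, and exploit the strict monotonicity of $t\mapsto|t|^{p-2}t$ to make the resulting integrand pointwise nonnegative. Your case bookkeeping is sound: with $\Phi=\phi(y)-\phi(x)$, $U=u(y)-u(x)$, $V=v(y)-v(x)$, one has $\Phi=U-V$ when both $x,y\in\{u>v\}$, while in the mixed case $x\in\{u>v\}$, $y\notin\{u>v\}$ one gets $\Phi<0$ and $U-V\le\Phi<0$, hence $U<V$ strictly and the integrand is \emph{strictly} positive. One remark on wording: in the last paragraph you say that $\phi(y)-\phi(x)$ ``must vanish'' in the mixed region, but it is strictly negative there by construction; what the argument actually gives is that the mixed region $\{u>v\}\times\{u\le v\}$ has measure zero (otherwise the double integral would be strictly positive), and since $\phi\in\wwz$ with $\Omega$ bounded forces $\{u\le v\}$ to have positive measure, it follows that $|\{u>v\}|=0$.

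Where you genuinely differ from the paper: the paper delegates the identity $u-v=C$ a.e.\ on $\{u\ge v\}$ to Lemma 9 of \cite{LL14}, and then must show $C=0$ by a separate argument that splits into the cases $sp>1$ (Sobolev trace considerations) and $sp<1$ (the relation $\psi^+(x)\psi^-(y)=0$). Your self-contained version extracts the stronger pointwise information, namely strict positivity of the integrand on the mixed region, directly from monotonicity, which eliminates the case distinction on the size of $sp$. That is a cleaner finish; just fix the imprecise phrase noted above.
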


\begin{proof} 
  This is
 Lemma 9 in \cite{LL14}, where it is shown that 
\begin{equation*}
  u(x) -v(x) = C \quad \text{a.e. \,where} \quad u(x)\geq v(x).
\end{equation*}
If $sp > 1$ we can directly conclude from the Sobolev boundary values that the constant $C$ is zero. If $0 < sp < 1$ one can use the fact that $\psi^+(x)\psi^-(y) = 0,$ where the notation is as in  Lemma 9 in \cite{LL14}.
\end{proof}

\begin{prop}\label{prop:compweak} If  a local weak subsolution  $u$  and  a local weak supersolution  $v$  in $\Omega$ satisfy
  \begin{equation*}
    \begin{cases}
      &u(x)\leq v(x) \quad\text{when}\quad x\in \R^n\setminus \Omega \\ &\limsup\limits_{x\to \xi} u(x)\leq \liminf\limits_{x\to \xi} v(x) \quad \text{when}\quad \xi \in \partial \Omega
      \end{cases}
\end{equation*}
then $u\leq v$ also in $\Omega$.
\end{prop}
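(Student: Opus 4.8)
The plan is to argue by contradiction, testing the two inequalities against the truncated difference $\phi=(u-v-\e)_+$. Suppose some point $x_0\in\Omega$ had $u(x_0)>v(x_0)$, and put $2\e=u(x_0)-v(x_0)>0$. Since $u$ is upper semicontinuous and $v$ is lower semicontinuous in $\Omega$ (the regularity recalled after Definition \ref{varsol}), the set $D_\e=\{x\in\Omega:\,u(x)-v(x)>\e\}$ is open and nonempty, as it contains $x_0$. I would first verify that $\overline{D_\e}$ is a compact subset of $\Omega$: by the boundary hypothesis, $\limsup_{x\to\xi}(u(x)-v(x))\le0<\e$ for every $\xi\in\partial\Omega$, so each $\xi$ has a neighbourhood in which (inside $\Omega$) $u-v<\e$; compactness of $\partial\Omega$ lets us cover it by finitely many such neighbourhoods, whose union $D_\e$ avoids, and $D_\e$ is bounded because $\Omega$ is. Hence $\phi=(u-v-\e)_+$ is a bounded function lying in $W^{s,p}_{loc}(\Omega)$ with compact support inside $\Omega$, so it is a legitimate nonnegative test function: it belongs to $W^{s,p}(K)$ and vanishes outside a compact $K'$ with $K'\subset\subset K\subset\subset\Omega$, as permitted in the definition of local weak sub- and supersolutions.

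Next I would insert $\phi$ into the subsolution inequality for $u$ and into the supersolution inequality for $v$ and subtract; the terms $\int_\Omega f\phi$ cancel, and with $J_p(t)=|t|^{p-2}t$ this yields
\begin{equation*}
\int_{\R^n}\!\!\int_{\R^n}\frac{\bigl(J_p(u(x)-u(y))-J_p(v(x)-v(y))\bigr)\bigl(\phi(x)-\phi(y)\bigr)}{|x-y|^{n+sp}}\,dx\,dy\ \le\ 0 .
\end{equation*}
The double integral converges absolutely because $u,v\in L^\infty(\R^n)$ and $\phi$ is bounded with compact support in $\Omega$: one splits the integration into the region where both points lie in a fixed compact neighbourhood of $\spt\phi$, where the Gagliardo seminorms of $u,v,\phi$ are finite, and the complementary region, where the kernel is bounded. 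The crucial observation is that the integrand is $\ge0$ everywhere: $J_p$ is strictly increasing, so $J_p(u(x)-u(y))-J_p(v(x)-v(y))$ has the same sign as $\bigl(u(x)-v(x)\bigr)-\bigl(u(y)-v(y)\bigr)$, and since $t\mapsto(t-\e)_+$ is nondecreasing, $\phi(x)-\phi(y)$ has the same sign as well; the product of two quantities of equal sign is nonnegative.

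It follows that the displayed integral is in fact $0$, hence the integrand vanishes almost everywhere. But for every $x\in D_\e$ and every $y\in\R^n\setminus\Omega$ we have $\phi(x)=u(x)-v(x)-\e>0$, $\phi(y)=0$, and $u(x)-v(x)>\e>0\ge u(y)-v(y)$, so $u(x)-u(y)\ne v(x)-v(y)$; therefore both factors in the integrand are nonzero at $(x,y)$ and the integrand is \emph{strictly} positive there. Since $|D_\e|>0$ and $|\R^n\setminus\Omega|>0$, the product set $D_\e\times(\R^n\setminus\Omega)$ has positive measure, which contradicts that a nonnegative integrand with vanishing integral must be zero a.e. Hence no such $x_0$ exists, i.e.\ $u\le v$ in $\Omega$.

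I expect the only genuinely delicate point to be the justification that $\phi=(u-v-\e)_+$ is an admissible test function: this forces one to combine \emph{both} boundary conditions with the semicontinuity of $u$ and $v$ and the compactness of $\partial\Omega$ in order to see that $\{u>v+\e\}\subset\subset\Omega$, and then to carry out the routine integrability bookkeeping for the double integral. By contrast, the algebraic monotonicity that makes the integrand nonnegative is entirely elementary here, because $u(x)-u(y)$ and $\phi(x)-\phi(y)$ are both governed by the same scalar monotone maps; this is precisely the simplification that the nonlocal structure affords over the local $p$-Laplacian.
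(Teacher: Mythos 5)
Your argument follows the paper's route: plug in $\phi=(u-v-\e)_+$, observe $\{u>v+\e\}\subset\subset\Omega$, and use the monotonicity of $J_p(t)=|t|^{p-2}t$. This is exactly what the paper does (it then refers to the algebra in the proof of Theorem \ref{thm:varcomp}, i.e.\ Lemma~9 of \cite{LL14}), and your pointwise nonnegativity of the integrand is a clean way to fill in the omitted details.

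There is, however, one claim that does not hold. You assert that $D_\e=\{u-v>\e\}$ is open because $u$ is upper semicontinuous and $v$ is lower semicontinuous. Under those hypotheses $u-v$ is \emph{upper} semicontinuous, which makes the sublevel sets $\{u-v<c\}$ open, but does \emph{not} make the superlevel sets $\{u-v>c\}$ open. (Example: $w(x)=-|x|$ for $x\neq 0$, $w(0)=1$ is upper semicontinuous, yet $\{w>1/2\}=\{0\}$.) You use openness only at the very end, to guarantee $|D_\e|>0$ so that the strictly positive integrand on $D_\e\times(\R^n\setminus\Omega)$ contradicts the vanishing integral. If instead $|D_\e|=0$, then $\phi$ is the zero element of $W^{s,p}$, the integral identity is vacuous, and no contradiction arises; nothing in your argument rules this case out.

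The repair is to drop the contradiction framing. For each fixed $\e>0$ your computation shows the nonnegative integrand vanishes a.e.; since $\phi$ is a monotone function of $u-v$ with compact support in $\Omega$, this forces $\phi=0$ a.e., i.e.\ $u\le v+\e$ a.e.\ in $\Omega$, and $\e\to0$ gives $u\le v$ a.e. Upgrading this to the pointwise inequality $u\le v$ in $\Omega$ then uses the fact that the chosen representatives are the essential limit superior (for the subsolution $u$) and the $\essliminf$ (for the supersolution $v$); this is precisely the regularity cited from \cite{CKKP15} that makes sub- and supersolutions pointwise defined. The paper's own sketch also elides this last step, but your write-up turns it into a specific false statement, so it should be replaced by the essential-limit argument rather than left implicit.
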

\begin{proof} Let $\e>0$. By the hypotheses, the set $\{u>v+\e\}$ is compactly contained in $\Omega.$ (It may be empty.) Therefore, the function $(u-v-\e)^+$ lies in $W_{\text{loc}}^{s,p}(\Omega)$ and vanishes outside $K$ for some $K\subset\subset \Omega$. Plugging in $(u-v-\e)^+$ as a test function  yields the same type of inequalities as in the proof Theorem \ref{thm:varcomp}. We omit the details.
\end{proof}

\begin{definition}[Comparison] We say that the lower semicontinuous function $v$ satisfies the comparison principle from above (for $\Lps v = -f$) in $\Omega$ if whenever 
the subdomain  $D\subset\subset \Omega$ and $h\in C(\overline D)$   is a local weak solution of $\Lps h = -f$ in $D$, such that 
\begin{gather*}
 h(x)\leq v(x) \quad \text{when}\quad x\in \R^n\setminus D\\
 h(\xi)\leq \liminf_{x\to \xi} v(x) \quad \text{when}\quad \xi\in\partial D,
\end{gather*}
then $h\leq v$. A similar definition goes for the comparison principle from below of upper semicontinuous functions $u$.
\end{definition}

Proposition \ref{prop:compweak} states that local weak supersolutions satisfy the comparison principle from above. In fact, the converse is also true.  We are content to prove a special case below, using an idea from \cite{Lin83}. For instance,  the weaker assumption that $v$ is lower semicontinuous and bounded would do. The virtue of the proof is that it does not need the interior continuity of the solution to the obstacle problem, which is known only in the case with zero right-hand side (see \cite{KKP15}).
\begin{prop} Suppose $v\in W^{s,p}(\R^n)\cap C(\R^n)\cap L^\infty(\R^n)$ satisfies the comparison principle from above in $\Omega$. Then $v$ is a local weak supersolution in $\Omega$.
\end{prop}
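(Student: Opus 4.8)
The statement asserts a converse: if $v$ satisfies the comparison principle from above in $\Omega$, then $v$ is a local weak supersolution. The natural strategy is to argue by contradiction. Suppose $v$ is \emph{not} a local weak supersolution. Then there exists a ball $B = B_R \subset\subset \Omega$ and a nonnegative test function showing the defining inequality fails; equivalently, the unique minimizer $w$ of the variational integral $J$ over the class $\{\,\psi \in W^{s,p}(\R^n) : \psi - v \in W^{s,p}_0(B)\,\}$ (with right-hand side $f$) satisfies $w \neq v$ on a set of positive measure in $B$. By the variational comparison principle (Theorem \ref{thm:varcomp}), since $v$ agrees with $w$ outside $B$ and the failure of the supersolution inequality means $\Lps v \le -f = \Lps w$ does \emph{not} hold from the required side, one actually gets $w \ge v$ in $B$, with $w > v$ somewhere. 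This $w$ is the obstacle-free solution, and the point of using \cite{Lin83}'s idea (rather than the obstacle problem) is precisely to avoid needing interior continuity of a solution to an obstacle problem with nonzero right-hand side.

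Next I would \emph{regularize the boundary data} to land inside the comparison hypothesis. The function $w$ is a weak solution in $B$ but we only know $w \in W^{s,p}(\R^n) \cap L^\infty$; to compare against $v$ via the comparison principle from above we need a \emph{continuous} competitor $h$ on a subdomain $D \subset\subset \Omega$ with $h \le v$ on $\R^n \setminus D$ and $h(\xi) \le \liminf_{x\to\xi} v(x)$ on $\partial D$. The trick is: shrink slightly to a ball $B' \subset\subset B$, mollify $v$ from below on the annulus $B \setminus B'$ to obtain continuous exterior data $\le v$ that still forces $w' > v$ somewhere in $B'$ (this survives because the strict inequality $w>v$ on a positive-measure set is stable under small perturbations of the data, by continuous dependence of the minimizer on its boundary values in $W^{s,p}$), and invoke the interior continuity of weak solutions (quoted after Definition \ref{varsol}, via Theorem 5.4 in \cite{IMS15}) to know $w'$ is continuous inside $B'$. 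Since $v$ is assumed continuous, $\liminf_{x\to\xi} v(x) = v(\xi) \ge w'(\xi)$ on $\partial B'$ by construction, so $h := w'$ is an admissible competitor in the definition of the comparison principle from above.

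The comparison principle from above then yields $h = w' \le v$ throughout $B'$, contradicting that $w' > v$ on a set of positive measure there. This contradiction shows $v$ must be a local weak supersolution in $\Omega$.

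The \textbf{main obstacle} is the middle step: arranging continuous boundary data $\le v$ on $\partial B'$ for which the solution still strictly exceeds $v$ somewhere inside, \emph{and} simultaneously having that solution be continuous up to $\partial B'$ in a way compatible with the $\liminf$ condition. The continuity of $v$ itself is what makes the $\liminf$ bookkeeping on $\partial B'$ clean; the quantitative stability of the minimizer under $W^{s,p}$-perturbations of the boundary values (so that a genuine strict gap is not destroyed by the mollification) is the technical heart, and one must track that the nonlocal tail contributions from $\R^n \setminus B$ do not spoil this — exactly the kind of ``disturbing interference from exterior values'' flagged in the introduction. Everything else is a routine application of Theorem \ref{thm:varcomp} and the quoted regularity.
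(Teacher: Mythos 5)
Your proposal breaks down at the very first substantive step. You claim that if $v$ fails to be a local weak supersolution in a ball $B$, then the obstacle-free Dirichlet solution $w$ (with $w-v\in W^{s,p}_0(B)$) satisfies $w\ge v$ in $B$. This does not follow from Theorem \ref{thm:varcomp}: the failure of the supersolution inequality for $v$ does \emph{not} make $v$ a subsolution, so neither direction of the variational comparison applies. A function $v$ that is neither a super- nor a subsolution will in general be crossed by $w$ (above $v$ in some regions, below in others), and then the contradiction at the end evaporates — the comparison principle from above does give $w\le v$ on $B$, but you have no dominating property of $w$ to contradict it. To get a competitor that genuinely dominates $v$ one must use the \emph{obstacle} problem with $v$ itself as obstacle, so that the constraint forces the minimizer $u\ge v$; and if $v$ is not a supersolution, one does get $u>v$ on a positive-measure set because $J(v+t\phi)<J(v)$ for small $t>0$. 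But then you hit precisely the wall the paper flags: the obstacle problem solution with nonzero right-hand side is not known to be interior-continuous, so you cannot feed $u$ (or a boundary-mollified variant of it) directly into the comparison principle from above, which requires $h\in C(\overline D)$.

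The paper's proof (following an idea of \cite{Lin83}) is designed precisely to sidestep this: it proceeds directly, not by contradiction, and never needs continuity of the obstacle problem solution $u$. It approximates $u$ by a minimizing sequence of \emph{continuous} admissible functions $u_k$, replaces each $u_k$ on a regular sub-level set $D_{k,\e}\subset\{u_k>v+\e\}$ by a continuous Dirichlet solution $h_k$ (which is continuous up to the boundary by Lemma \ref{lem:varsolcont} because $D_{k,\e}$ satisfies the exterior sphere condition), and then uses a convex combination $w_k=\lambda h_k+(1-\lambda)u_k$ to re-enter the admissible class for the obstacle problem while lowering $J$. The upshot is $u\le v+2\e$, hence $u=v$, hence $v$ is a supersolution. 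Your reduction to the plain Dirichlet solution is a genuinely different (and much shorter) route, but it is not correct; the reversed comparison you invoke is unavailable. As a side remark, your mollification step would also be superfluous in your own set-up: since $v\in W^{s,p}(\R^n)\cap C(\R^n)$, Lemma \ref{lem:varsolcont} already yields $w\in C(\R^n)$ directly, with no need to regularize the boundary data on an annulus.
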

\begin{proof} Fix $v$ and let $D$ be an open set such that $D\subset\subset \Omega.$ We shall show that $v$ is the solution to an obstacle problem, and as such it is a local weak supersolution.  Let  $u$  be the minimizer of the variational integral $J(u)$ in \eqref{ett} among the class
$$
\mathcal{A}=\{u\in W^{s,p}(\R^n)\cap L^\infty(\R^n),\quad  u\geq v,\quad  u-v\in W_0^{s,p}(D)\},
$$
of admissible functions.  If  only continuous admissible functions are considered, one gets the same $u$. We do not use the knowledge that $u$ is continuous.
We can find a minimizing sequence $u_k\in \mathcal{A}\cap C(\R^n)$, $k=1,2,3,\ldots$ Take $\e > 0.$ The sets $S_{k,\e}=\{u_k>v+\e\}\cap D$ and $S_{k,2\e}=\{u_k>v+2\e\}\cap D$ are open. If they are empty for infinitely many $k$, we have $u\leq v+2\e$. If not, we can find a regular set (satisfying the exterior ball condition) $D_{k,\e}$  such that $$S_{k,2\e}\subset D_{k,\e}\subset S_{k,\e}.$$ Since $D_{k,\e}$ satisfies the exterior ball condition, Lemma \ref{lem:varsolcont} implies that there is a weak solution $h_k\in  W^{s,p}(\R^n)\cap C(\R^n)\cap L^\infty(\R^n)$ satisfying
$$
\begin{cases}
\Lps h_k = -f\quad \text{in }D_{k,\e}\\
h_k = u_k \quad \text{in }\R^n\setminus D_{k,\e}.
\end{cases}
$$
In addition, since $h_k=u_k\leq v+2\e$ outside $D_{k,\e}$, the  comparison principle valid for $v$ implies $h_k\leq v+2\e$ in $\R^n$. We also note that by construction $J(h_k)\leq J(u_k)$. However, the function $h_k$ is not necessarily admissible. The idea is to construct a suitable admissible function, using $h_k$. We note that outside $D_{k,\e}$, $u_k\leq v +2\e$ and as $u_k \in \mathcal{A}$ we have $u_k\geq v$. By Lemma \ref{nzrhs} (proved at the end of the paper), one can modify $h_k$ far away so that it becomes a super- or a subsolution with zero right-hand side. Then by comparing with constant functions we obtain
$$
\|h_k\|_{L^\infty(\R^n)}\leq C(f,\|u_k\|_{L^\infty(\R^n \setminus D_{k,\e})})\leq M<\infty,
$$
where we may assume that $M>\|v\|_{L^\infty(\R^n)}+1$.

Take $\e<1/2$, and fix $\lambda\in (0,\e/(4 M)).$ Consider the convex combination
\begin{equation}\label{convlambda}
w_k=\lambda h_k+(1-\lambda)u_k.
\end{equation}
We note that outside $D_{k,\e}$ we have $w_k = u_k$ and in $D_{k,\e}$
$$
w_k\geq -\lambda M+(1-\lambda)(v+\e)\geq -\frac{\e}4+v-\lambda v+\e-\lambda \e\geq-\frac{\e}4+v-\frac{\e}4+\e-\frac{\e}4 >v,
$$
where we used that $u_k>v+\e$ in $D_{k,\e}$. Hence, $w_k$ is admissible in the minimization so that $J(w_k)\geq J(u)$. By convexity
$$
J(w_k)\leq \lambda J(h_k)+(1-\lambda)J(u_k)\leq J(u_k).
$$
From the inequality
$$J(u)\leq J(w_k)\leq J(u_k)\to J(u)$$
we conclude that the sequence $w_1,w_2,w_3,\dots$ is minimizing. Thus it has a subsequence that converges weakly in $L^p$ to $u$. The subsequence is even strongly convergent in $L^p$, as all minimizing sequences are. We extract a further subsequence that converges a.e. to $u.$
Now the defining identity (\ref{convlambda}) forces the convergence $h_k \to u$ a.e.  Since $h_k\leq v+2\e$ we can conclude that $u\leq v+2\e$. Hence, $u\leq v+2\e$ in any case. Since $\e$ can be chosen how small as we wish, $u\leq v$. By construction also $u\geq v$ and the proof is complete.
\end{proof}

\begin{rem} To proceed to a more general case with a lower semicontinuous $v \in L^{\infty}(\Rn)$ satisfying the comparison principle, one can apply the  Proposition to the infimal concolutions
$$v_{\e}(x)\,=\,\inf_{y}\left\{v(y)+\frac{|x-y|^2}{2 \e}\right\},$$
which obey the comparison principle, if $v$ does. We do not proceed any further here with this matter.
\end{rem}
\section{Exterior Sphere Condition and  Poisson Modification}

When it comes to the continuity of functions in $W^{s,p}(\R^n)$, 
the three cases $sp < n,\,sp=n,\, sp >n$ are  different. When $sp >n$ the space contains merely continuous functions.  For \emph{radial functions}, which we shall use as auxiliary tools, this continuity comes for free also in the wider range  $sp > 1$, see Lemma \ref{sob1d}.

 The case $sp < 1$ is problematic because it contains functions that not at all obey the boundary values. For example, it is straight forward to verify that the characteristic function of the unit ball, 
\begin{equation*}
\mathbf{1}(x) = \begin{cases}
  1\qquad \text{when}\qquad |x| <1,\\
  0\qquad \text{when}\qquad |x| \geq 1,
\end{cases}
\end{equation*}
belongs to $W^{s,p}(\R^n)$ when $sp<1$. In fact, it also belongs to $W_0^{s,p}(B_1)$. 
Furthermore,
$$
\Lps \mathbf{1} \leq -c(n,sp)<0\quad \text{ in $B_1$}. 
$$ 
Note that when $sp<1$, the $W^{s,p}$-seminorm of the characteristic function of a set $D$ is exactly the $sp$-fractional perimeter (cf. \cite{FV13}), which is finite if for instance $\partial D$ is Lipschitz. 

Let $\omega \in W^{s,p}(\R^n)$ denote the unique (variational) solution of the problem
\begin{equation*}
\begin{cases}
\Lps(\omega) = -1,\quad\text{in}\quad B_R\setminus \overline{B_r},\\
\qquad\,\omega = 0,\quad\text{in}\quad B_r,\\
\qquad\,\omega =1,\quad\text{in}\quad \R^n\setminus \overline{B_R}.
\end{cases}
\end{equation*}
where  $B_R\setminus \overline{B_r}$ denotes the ring domain $r < |x| < R.$ In order to use $\omega$ as a barrier \footnote{In the linear case $p=2$ this is Lemma 2.6 in \cite{ROS}, who used superposition and a fractional Kelvin transform. A related result is Lemma 4.3 in \cite{IMS15}}, we need to assure that it attains its boundary values. 

\begin{thm}\label{thm:barrier}
  The function $\omega$ is continuous in $\R^n$ and radial. Furthermore, $\omega > 0$ in $B_R\setminus \overline{B_r}.$
\end{thm}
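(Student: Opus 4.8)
The plan is to establish the three claimed properties of $\omega$ --- radial symmetry, positivity in the ring, and continuity up to the boundary --- in that order, since each relies on the previous one.

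\emph{Radial symmetry.} First I would observe that the operator $\Lps$ and the variational integral $J$ are invariant under rotations $R \in O(n)$, and that the data of the problem (the domains $B_r$, $B_R$, the right-hand side $\equiv 1$, and the exterior value $\equiv 1$) are all rotation-invariant. Hence, for any rotation $R$, the function $\omega \circ R$ solves the same problem and minimizes the same $J$ in the same admissible class. By the uniqueness part of Theorem~\ref{thm:varex} (strict convexity of the seminorm), $\omega \circ R = \omega$ for every $R \in O(n)$, i.e. $\omega$ is radial.

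\emph{Positivity.} Here the natural tool is the comparison principle, Theorem~\ref{thm:varcomp}. Comparing $\omega$ with the constant subsolution $0$: on $\R^n\setminus(B_R\setminus\overline{B_r})$ one has $\omega \geq 0$ (it is $0$ on $B_r$ and $1$ outside $B_R$), and $\Lps 0 = 0 \geq -1 = \Lps\omega$ in the ring, so $\omega \geq 0$ everywhere. To upgrade to strict positivity in the open ring, I would argue that the set $\{\omega = 0\}\cap(B_R\setminus\overline{B_r})$ cannot have positive measure: on such a set $\omega$ attains its minimum, and testing the equation (or using the strong minimum principle for supersolutions of these nonlocal equations, cf. the lower-semicontinuity/regularity references already invoked in the Preliminaries) forces a contradiction with $\Lps \omega = -1 < 0$. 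Alternatively, since $\omega$ is a supersolution of $\Lps\omega = 0$ (as $-1 \le 0$) that is not identically zero, the nonlocal strong minimum principle gives $\omega > 0$ in the connected open set $B_R\setminus\overline{B_r}$; I would cite this.

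\emph{Continuity.} Interior continuity of $\omega$ in each of the open regions $B_r$, $B_R\setminus\overline{B_r}$ (where it is a bounded local weak solution) and $\R^n\setminus\overline{B_R}$ is immediate from the regularity theory cited in the Preliminaries (e.g. Theorem~5.4 in \cite{IMS15}), and on $B_r$ and $\R^n\setminus\overline{B_R}$ it equals the constants $0$ and $1$. The real content is continuity \emph{across} the spheres $|x|=r$ and $|x|=R$, i.e. that $\omega$ attains its prescribed values there. This is where I expect the main obstacle, and it is precisely why radiality was proved first: since $\omega \in W^{s,p}(\R^n)$ is radial, Lemma~\ref{sob1d} gives that $\omega$ is continuous away from the origin provided $sp > 1$, which handles that range directly. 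For $sp \le 1$ Lemma~\ref{sob1d} is unavailable, and one must instead build explicit radial barriers from above and below at the two spheres --- this is the ``chapter of calculations'' alluded to in the introduction, using radial sub/supersolutions of $\Lps$ on small rings and the comparison principle to squeeze $\omega$ between them near $|x| = r$ and $|x| = R$. I would carry this out by exhibiting, for each boundary sphere, a continuous radial function that touches the correct value there, lies on the correct side of the exterior data, and is a strict super- (resp. sub-) solution in a neighborhood, then invoke Theorem~\ref{thm:varcomp}; letting the barrier parameters degenerate pins the limit of $\omega$ to the right value. Combining the interior regularity with these barrier estimates yields continuity of $\omega$ on all of $\R^n$, completing the proof.
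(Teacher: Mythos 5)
Your overall strategy coincides with the paper's: radiality via uniqueness of the minimizer, $\omega\ge 0$ via comparison, continuity for $sp>1$ from Lemma~\ref{sob1d} (which is indeed why radiality must come first), and continuity across the two spheres for $sp\le 1$ by sandwiching with explicit sub/supersolutions built in the ``chapter of calculations'' --- precisely the functions $(|x|-r)_+^{\beta}$ from Lemma~\ref{ring} for the inner sphere and the planar barriers $1\pm C(R+x_1)_+^{\beta}$ from Lemma~\ref{half-space} for the outer one. The one place you genuinely diverge is strict positivity. You propose to invoke a nonlocal strong minimum principle for supersolutions (or a contradiction at a point of interior minimum), neither of which the paper proves or cites; the paper instead stays self-contained by placing the explicit lower barrier $\ell(x_1-r)$ of Proposition~\ref{minorant} (scaled down so that $\Lps(c\,\ell)\ge -1$) underneath $\omega$ in the half-space $x_1>r$ and rotating. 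Your route is plausible if you import such a minimum principle from the literature, but the paper's barrier argument is preferable here because it uses only tools already built in the same section and, as a bonus, yields a quantitative lower bound near $\partial B_r$; note also that the pointwise ``test $\Lps\omega=-1<0$ at a minimum'' version of your argument would need to be formulated carefully, since $\omega$ is only a weak (not viscosity) supersolution and is merely lower semicontinuous, not a priori continuous, at the stage where you invoke it. The continuity step is left as a sketch in your proposal, but you correctly identify which barriers are needed and how comparison closes the argument, so there is no conceptual gap there.
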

\begin{proof} 
  By the comparison principle $\omega \geq 0.$ Since $\omega$ is the unique minimizer, it must be a radial function. (Otherwise, a rotation would spoil the uniqueness.) Thus we know that $\omega$ is continuous in $\R^n$ in the case $sp > 1,$ because of the Sobolev inequality in \emph{one} variable, see Lemma \ref{sob1d}. This settles the case  $sp > 1.$

  The boundary values require a  delicate analysis which relies on Lemma \ref{ring}. First, note that $\sup \omega < \infty$ by comparison with the function in (\ref{bounded}). The function
  $$w = (|x|-r)^{\beta}_+,\qquad 0 < \beta < s,$$
  constructed in Lemma \ref{ring} satisfies
  $$\Lps w(x) \leq -1\qquad \text{in a thin ring} \qquad r< |x| < r+\delta.$$
  Multiplying by a large constant $\Lambda$ we see that the function $\Lambda w$ obeys the rules
  \begin{equation*}
    \begin{cases}
      \Lps(\Lambda w) \leq -1 \qquad \text{in} \qquad r < |x| < r+\delta\\
      \quad \Lambda w \geq \|\omega\|_{\infty} \qquad \text{when} \qquad |x| > r + \delta\\
      \quad \Lambda w = 0 \qquad \text{when} \qquad |x| \leq r.
    \end{cases}
  \end{equation*}
  By the comparison principle
  $$0\leq \omega \leq \Lambda w.$$
  Thus $\omega$ is continuous across the sphere $\dd B_r.$

  The outer boundary values are not  essential to us, but we argue as follows. By rotation an arbitrary boundary point can be brought to the position $(-R,0,\dots,0).$
  Now the comparison
  $$1+C_1(R+x_1)_+^{\beta} \geq \omega(x) \geq 1-C_2(R+x_1)_+^{\beta}, \quad 0 < \beta < s, $$
  is possible, see Lemma \ref{half-space}. This yields the continuity across the outer sphere.

  Finally, we have to show that $\omega$ has no zeros in the ring domain. It is convenient to consider the function $\ell(x_1-r)$ from Proposition \ref{minorant} in the half-space $x_1 > r$ Multiply it by a small constant to achieve
  $$\Lps \omega(x) = -1 \leq \Lps \ell(x_1-r)\qquad x_1 > r,\quad r <|x| < R.$$
 By comparison, $\omega(x) \geq \ell(x_1-r).$ This prevents $\omega$ from having zeros in the half-space $x_1> r.$ By rotational symmetry, this is enough.  \end{proof}

\begin{rem} Lemma 4.3 in \cite{IMS15} uses a more implicit construction of the auxiliary barrier needed for Theorem 15.
  \end{rem}

Perron's method presupposes that in some simple domains  the equation can be solved with continuous boundary values, for example in all balls. (For the Laplace equation Poisson's formula originally took care of this initial step.) We shall need this possibility to  construct a so-called Poisson modifcation of our functions.

 \begin{lemma} \label{lem:varsolcont} Assume $g\in W^{s,p}(\R^n)\cap C(\R^n)$. For a ball $B$ the variational solution $u$ of 
\begin{equation*}
\begin{cases}
\Lps u = -f\quad \text{in}\quad B\\
u = g,\quad \text{in}\quad \R^n\setminus B
\end{cases}
\end{equation*}
 is continuous across $\partial B$. Thus $u \in C(\R^n).$
 \end{lemma}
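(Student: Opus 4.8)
The plan is to establish continuity across $\partial B$ by constructing barriers from above and below at an arbitrary point $\xi_0 \in \partial B$, exactly in the spirit of the barrier $\omega$ built in Theorem \ref{thm:barrier}. First I would reduce the problem: since $g \in C(\R^n)$, fix $\xi_0 \in \partial B$ and $\e > 0$, and choose $\rho > 0$ so small that $|g(x) - g(\xi_0)| < \e$ for $x \in B_\rho(\xi_0)$. Place an exterior ball: because $B$ is a ball, at $\xi_0$ there is a ball $B_r(z_0) \subset \R^n \setminus B$ touching $\partial B$ at $\xi_0$ (take $z_0$ on the line through the center of $B$ and $\xi_0$, just outside). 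The ring domain $B_R(z_0) \setminus \overline{B_r(z_0)}$ with $R$ small then meets $B$ near $\xi_0$, and the radial function $\omega = \omega_{r,R,z_0}$ from Theorem \ref{thm:barrier}, suitably scaled and translated, is continuous, vanishes at $\xi_0$, equals $1$ outside $B_R(z_0)$, and satisfies $\Lps \omega \le -1$ in the ring.

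Next I would assemble the comparison function. Set
$$
\Psi(x) = g(\xi_0) + \e + \bigl(\|g\|_{L^\infty(\R^n)} + \|f\|_{L^\infty} + 1\bigr)\bigl(M\,\omega(x) + \Phi(x)\bigr),
$$
schematically, where $M$ is a large constant and $\Phi$ is an auxiliary bounded function with $\Lps \Phi \le -\|f\|_\infty$ globally — for this one can use (a scaled translate of) the characteristic-function-type supersolution noted before Theorem \ref{thm:barrier} when $sp<1$, or simply absorb the right-hand side into $\omega$ by taking the ring equation with data $-\|f\|_\infty$ in place of $-1$. The point is to arrange three things simultaneously: (i) $\Psi \ge g$ on $\R^n \setminus B$ (near $\xi_0$ because $g < g(\xi_0)+\e$ there and the barrier is $\ge 0$; far from $\xi_0$ because the large constant times $\omega \equiv 1$ dominates $\|g\|_\infty$); (ii) $\Lps \Psi \le -\|f\|_\infty \le -f = \Lps u$ in a neighborhood of $\xi_0$ intersected with $B$; (iii) $\Psi$ is continuous with $\Psi(\xi_0) = g(\xi_0) + \e$. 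Then the variational comparison principle, Theorem \ref{thm:varcomp} (or Proposition \ref{prop:compweak}), gives $u \le \Psi$ in $B$, hence $\limsup_{x \to \xi_0} u(x) \le g(\xi_0) + \e$. A symmetric construction with $-\Psi$-type lower barrier gives $\liminf_{x\to\xi_0} u(x) \ge g(\xi_0) - \e$. Letting $\e \to 0$ yields $\lim_{x\to\xi_0} u(x) = g(\xi_0) = u(\xi_0)$, i.e. continuity across $\partial B$; combined with the known interior continuity of weak solutions and the continuity of $g$ in the exterior, this gives $u \in C(\R^n)$.

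The main obstacle is bookkeeping the comparison principle correctly when the barrier is only a local (super)solution on a small ring near $\xi_0$ rather than on all of $B$: one must restrict attention to the subdomain $B \cap B_R(z_0)$, check that $\Psi \ge u$ on its complement (which is where the "far away" large-constant term and the exterior data $u = g$ are used), and then apply comparison on that subdomain. A secondary technical point is that Theorem \ref{thm:varcomp} is stated for functions in $W^{s,p}(\R^n)$, so one should either verify $\Psi \in W^{s,p}(\R^n)$ (true since $\omega \in W^{s,p}(\R^n)$ and the added terms are bounded with the requisite tail behavior) or invoke Proposition \ref{prop:compweak} in its local-weak form; the latter is cleaner since $u$ and the barriers are all continuous and bounded. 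I expect no genuine difficulty in the range $sp > 1$, where $\omega$'s continuity is immediate from Lemma \ref{sob1d}; the delicate range $sp \le 1$ is already handled inside Theorem \ref{thm:barrier}, so I would simply quote it.
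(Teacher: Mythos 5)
Your proposal is in the same spirit as the paper's proof---both construct a barrier from the radial function $\omega$ of Theorem~\ref{thm:barrier}, built on a ring around an exterior ball tangent at $\xi_0$---but you miss the key simplifying choice that makes the paper's argument so clean. You take the outer radius $R$ of the ring \emph{small}, so that the ring only covers a neighbourhood of $\xi_0$ inside $B$, which forces you (as you yourself note) to run the comparison on the subdomain $B \cap B_R(z_0)$ and to verify an a~priori $L^\infty$-bound on $u$ in order to dominate $u$ on $B\setminus B_R(z_0)$ by the constant part of the barrier. The paper instead takes $R$ \emph{large}, so that $B \subset B(y_0,R)$ and hence the whole ball $B$ is contained in the ring domain; then $\omega$ (and $C\omega$) solves the barrier equation on \emph{all} of $B$, the comparison can be run directly on $B$ against the exterior data $g$, and no $L^\infty$-bound on $u$ is needed. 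Your bookkeeping can certainly be carried through, but it is not the shortest path.

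Two smaller points. First, the auxiliary function $\Phi$ with $\Lps\Phi \le -\|f\|_\infty$ \emph{globally} does not exist as a bounded function (and the characteristic-function supersolution is only a supersolution in $B_1$ and only for $sp<1$); your fall-back of ``absorb the right-hand side into $\omega$'' is the correct mechanism, and it is exactly what the paper does via $(p-1)$-homogeneity: $\Lps(C\omega) = -C^{p-1} \le -\|f\|_\infty$ once $C^{p-1} \ge \|f\|_\infty$. Second, you should note the regularity-of-competitors issue: the paper handles it by showing $(u - G)^+ \in W_0^{s,p}(B)$ via Lemma~\ref{lem:sob0} before invoking Theorem~\ref{thm:varcomp}; your suggestion to use the local comparison Proposition~\ref{prop:compweak} instead would require $u$ to be continuous up to $\partial B$, which is precisely what is being proved, so the variational form of the comparison is the right one to cite.
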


 \begin{rem}\label{ballc2} The ball $B$  can be replaced by any domain $\Omega$ that satisfies the  \emph{exterior sphere condition.} This includes all domains with a boundary of class $C^2.$
 \end{rem}

\bigskip
\begin{proof} Fix a boundary point $\xi_0$. Let $\varepsilon > 0$.  Since $g$ is continuous, there is $\delta>0$ such that
  $$|g(\xi_0)-g(x)| < \varepsilon \qquad \text{when} \qquad |x-\xi_0| < 4\delta.$$
  Let the exterior ball $B(y_0,\delta)$ be tangent to  $B$ at $\xi_0$ so that $ \overline{B(y_0,\delta)}\,\cap\, \overline{B} = \{\xi_0\}.$ Let $\omega$ be the barrier in Theorem  \ref{thm:barrier}, constructed  for the ring $\delta < |x-y_0| < R$, where  $R$ is taken so large that  $B\subset B(y_0,R).$ We can choose a large constant $C$ so that
  $$  |g(x)| < C \omega(x) \qquad \text{when} \qquad |x-y_0| > 2\delta.$$
  It follows that
  $$ g(x)\, < \,\overbrace{g(\xi_0) + \varepsilon + C\omega(x)}^{G(x)} \qquad \text{in}\qquad \R^n\setminus B.$$
  
Now $\Lps(C\omega) = C^{p-1}(-1) \leq -f$\, if\, $C^{p-1}\geq \|f\|_{\infty}.$ By increasing $C$,  if necessary, we can also guarantee that $G(x)$ is a local weak supersolution of \eqref{eq:main} in $B$. We see\footnote{
It is clear that $(g-G)^+\in W_0^{s,p}(B)$. Then, 
$$
0\leq (u-G)^+=(u-g+g-G)^+\leq (u-g)^++(g-G)^+\in W_0^{s,p}(B)
$$
since $u-g\in W_0^{s,p}(B)$. It follows from Lemma \ref{lem:sob0} that $(u-G)^+\in W_0^{s,p}(B)$.} that $(u-G)^+\in W_0^{s,p}(B)$. By the comparison principle (Theorem \ref{thm:varcomp}) $u\leq G$ in $B$, i.e., 
$$
u(x)\leq g(\xi_0)+\e+C\omega (x).
$$
Thus
$$
\limsup_{x\to x_0} u(x)\leq g(\xi_0)+\e+C \lim_{x\to x_0}\omega(x) = g(\xi_0)+\e
$$
and so 
$$
\limsup_{x\to x_0} u(x)\leq g(\xi_0).
$$
Similarly, considering
$$g(x) > g(\xi_0)- \varepsilon - C\omega(x),$$
we get the opposite inequality
$$
\liminf_{x\to x_0}u(x)\geq g(\xi_0).
$$
\end{proof}

Recall that the right-hand member $f$ of our equation is bounded.
Let  $\psi: \Omega \rightarrow \R$  be a bounded, smooth function, say in $C^1(\R^n).$ Select a regular subdomain  $D \subset \subset \Omega.$ By Lemma \ref{lem:varsolcont} and Remark \ref{ballc2} the solution of the problem
\begin{equation*}
  \begin{cases}
    \Lps u = -f \quad \text{in } D\\
    u  = \psi \quad \text{in } \R^n \setminus D
    \end{cases}
\end{equation*}
belongs to $C(\R^n) \cap W^{p,s}(\R^n).$ We use the notation
$$\Psi = P(\psi,D)$$
for this function. Notice that it coincides with $\psi$ outside $D$. For the lack of a better name,
we say that the function $P(\psi,D)$ is the \emph{Poisson modification} of $\psi$ with respect to the domain $D$.

Here $\psi$ was smooth. We want to extend the definition to a
bounded and lower semicontinuous function $v$, obeying the comparison principle from above in $\Omega$. In order to modify it in a ball $B\subset\subset \Omega$ we use an approximation argument.  By semicontinuity there  are smooth functions  $\psi_j$  such that  $$\psi_j \nearrow v\quad\text{in}\quad  \R^n.$$
We can define the {Poisson modification} of $v$ in $B$ as the  limit
$$P(v,B) = \lim_{j \to \infty} P(\psi_j,B).$$ We spell out the details below.

We define 
$$
v_j=P(\psi_j,B)
$$
as the variational solution in $B$ with $v_j-\psi_j\in W_0^{s,p}(B)$. Since $\psi_j$ is smooth,  Lemma \ref{lem:varsolcont} implies that each $v_j$ is continuous across the boundary of the ball $B$. Thus $v_j \in C(\R^n).$ By Theorem \ref{thm:varcomp},   the sequence $v_1,v_2,\dots $ is increasing and $v_j\leq v$. The pointwise limit
$$V(x) = \lim_{j \to \infty}P(\psi_j,B)(x)$$
exists and satisfies $V\leq v$. It is lower semicontinuous. We have
$$
  \liminf_{x\to \xi} V(x)\geq \liminf_{x\to \xi} v_j(x)=\psi_j(\xi),\quad j=1,2,\dots
$$
when $\xi \in \partial B.$ Since $\lim \psi_j(\xi) = v(\xi),$
\begin{equation}\label{Vv}
\liminf_{x\to \xi} V(x)\geq v(\xi)\quad \text{when} \quad \xi \in \partial B.
\end{equation}

In $B$ the constructed function $V$ is the limit of an increasing sequence of solutions. We claim that  $V$ itself is a local weak solution in $B$. 
Since $\psi_j\to v$ and $v$ is bounded, we may assume that the sequence $v_j$ is uniformly bounded. If not, we may consider the uniformly bounded sequence of Lipschitz functions
$$
\max\left(\min\left(v_j,\|v\|_{L^\infty(\R^n)}+1\right),-\|v\|_{L^\infty(\R^n)}-1\right), 
$$
which also converges to $v$. From the Caccioppoli estimate for local weak solutions \eqref{cacc} we have
\begin{align*}
\int_{K}\int_{K} \frac{|u(x)-u(y)|^p}{|x-y|^{n+sp}} dx dy &\leq C(s,p,K)\left(\|v_j\|^p_{L^\infty(\R^n)}+\|v_j\|_{L^\infty(\R^n)}^\frac{p}{p-1}\right)\\&\leq 2C(s,p,K)(\|v\|_{L^\infty(\R^n)}+1)^{\max(p,p/(p-1))}, 
\end{align*}
for any $K\subset\subset B$. This implies that $v_j$ converges weakly in $W^{s,p}(K)$ and strongly in $L^p(K)$. Now pick a smooth test function $\phi\in C_0^\infty(K)$. Since $v_j$ is a local weak solution in $B$, 
$$
\int\limits_{\R^n}\!\!\int\limits_{\R^n}\frac{|v_j(x)\!-\!v_j(y)|^{p-2}(v_j(x)\!-\!v_j(y))(\phi(x)\!-\!\phi(y))}{|x-y|^{n+sp}}\, dx dy =  \int_\Omega f\phi\,dx.
$$
By splitting the integrals into integrals over $K$ and $\R^n\setminus K$ we have
$$
\int_{\R^n}\!\int_{\R^n}=\int_{K}\int_{K}+2\int_{K}\int_{\R^n\setminus K}.
$$
In the integral 
$$
\int\limits_{K}\!\!\int\limits_{K}\!\frac{|v_j(x)-v_j(y)|^{p-2}(v_j(x)-v_j(y))(\phi(x)-\phi(y))}{|x-y|^{n+sp}}\, dx\, dy, 
$$
we may pass to the limit using the weak convergence in $W^{s,p}(K)$. In the integral
\begin{align*}
&2\int_{K}\int_{\R^n\setminus K}\frac{|v_j(x)-v_j(y)|^{p-2}(v_j(x)-v_j(y))(\phi(x)-\phi(y))}{|x-y|^{n+sp}}\, dx dy \\
&=-2\int_{\spt\phi}\int_{\R^n\setminus K}\frac{|v_j(x)-v_j(y)|^{p-2}(v_j(x)-v_j(y))\phi(y)}{|x-y|^{n+sp}}\, dx dy,
\end{align*}
 $|x-y|$ is bounded away from zero, so that we may use the convergence of $v_j$ in $L^p$ to pass to the limit. Hence, 
$$
\int\limits_{\R^n}\!\!\int\limits_{\R^n}\!\frac{|V(x)\!-\!V(y)|^{p-2}\bigl(V(x)\!-\!V(y)\bigr)\bigl(\phi(x)\!-\!\phi(y)\bigr)}{|x-y|^{n+sp}}\, dx\, dy =  \int_\Omega \! f\phi\,dx, 
$$
for any $\phi\in C_0^\infty(K)$. Since $K$ is an arbitrary open set compactly contained in $B$, it follows that $V$ is a local weak solution in $B$·


Next, we prove that $V$ satisfies the comparison principle. Take $D\subset\subset \Omega$ and let $h\in C(\overline D)$ be a local weak solution in $D$ such that 
$h\leq V$ in the complement $\R^n\setminus D$ and 
\begin{equation}
\label{hV}
h(\xi)\leq \liminf_{x\to \xi}  V(x),\quad \xi\in \partial D.
\end{equation}
We claim that $h \leq V.$ Since $V\leq v$ it follows that $h\leq v$ in $\R^n\setminus D$ and 
$$
h(\xi)\leq \liminf_{x\to \xi}  V(x)\leq  \liminf_{x\to \xi}  v(x), \quad \xi\in \partial D.
$$
By the comparison principle, 
\begin{equation}\label{hv} h\leq v.
\end{equation}

In the case $B\cap D$ is empty, the result follows from \eqref{hv}. Therefore, let us study $V$ more closely in the set $B \cap D$ assuming that it is not empty. In $B\cap D$, $V$ is a local weak solution. In $\R^n\setminus D$, $h\leq V$ by the hypothesis on $h$ and in $D\cap (\R^n\setminus  B)$, $h\leq v=V$ by \eqref{hv}. On the boundary we have
\begin{equation*}
  \begin{cases}
h(\xi)\leq \underset{\substack{x\in B\cap D\\x \to \xi}}{\liminf}\, V(x)=V(\xi),\quad \text{when} \quad \xi \in B\cap \partial D\\
 h(\xi) \leq v(\xi) \leq  \underset{\substack{x\in B\cap D\\x \to \xi}}{\liminf}\, V(x),\quad \text{when} \quad \xi \in \partial B \cap D
  \end{cases}
\end{equation*}
where we have used \eqref{hV} and the fact that $V$ is continuous in $B$ for the first inequality and \eqref{hv} together with \eqref{Vv} for the second. By the comparison principle $h \leq V$ in $\R^n$. Thus the comparison principle is valid for $V$. Thus we have proved:

\begin{prop}\label{modification} Let $v$ be a bounded and lower semicontinuous function satisyfing the comparison princple from above in $\Omega$. Then the Perron modification
  $$V =P(v,B)$$
  constructed above, is a solution in $B$ for any ball $B\subset\subset \Omega$, and  obeys the comparison principle in $\Omega$.
  \end{prop}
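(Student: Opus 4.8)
The plan is to realize the construction sketched above and then verify the two assertions — that $V=P(v,B)$ is a local weak solution in $B$, and that it obeys the comparison principle from above in $\Omega$ — one at a time. First I would fix the approximating data: by lower semicontinuity choose smooth $\psi_j\nearrow v$ (and, since $v$ is bounded, one may take each $\psi_j$ bounded), set $v_j=P(\psi_j,B)$ to be the variational solution with $v_j-\psi_j\in W^{s,p}_0(B)$, which is continuous across $\partial B$ by Lemma~\ref{lem:varsolcont}. The variational comparison principle (Theorem~\ref{thm:varcomp}) shows that $v_j$ is increasing and $v_j\le v$, so $V=\lim_j v_j$ exists pointwise, is lower semicontinuous, satisfies $V\le v$, coincides with $v$ outside $B$ (because $v_j=\psi_j$ there), and obeys $\liminf_{x\to\xi}V(x)\ge v(\xi)$ for $\xi\in\partial B$, which is \eqref{Vv}.

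Second, to see that $V$ is a local weak solution in $B$, I fix $K\subset\subset B$. Using that the $v_j$ are uniformly bounded (by $\|v\|_{L^\infty(\R^n)}+1$ after truncation, or directly if the $\psi_j$ are chosen bounded), the Caccioppoli inequality \eqref{cacc} gives a uniform bound on $[v_j]_{W^{s,p}(K)}$, hence $v_j\rightharpoonup V$ weakly in $W^{s,p}(K)$ and strongly in $L^p(K)$. For a test function $\phi\in C_0^\infty(K)$ I split the double integral in the weak formulation for $v_j$ into the part over $K\times K$ and the part over $K\times(\R^n\setminus K)$; in the first the weak $W^{s,p}(K)$-convergence lets me pass to the limit, and in the second $|x-y|$ is bounded away from $0$ on $\spt\phi\times(\R^n\setminus K)$, so pointwise convergence together with the uniform bound (dominated convergence) suffices. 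Since $K$ is arbitrary, $V$ solves $\Lps V=-f$ locally in $B$.

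Third, the comparison principle for $V$: let $D\subset\subset\Omega$ and let $h\in C(\overline D)$ be a local weak solution in $D$ with $h\le V$ in $\R^n\setminus D$ and $h(\xi)\le\liminf_{x\to\xi}V(x)$ on $\partial D$, as in \eqref{hV}. Since $V\le v$, these hypotheses give $h\le v$ in $\R^n\setminus D$ and $h(\xi)\le\liminf_{x\to\xi}v(x)$ on $\partial D$, so the comparison principle assumed for $v$ yields $h\le v$ everywhere, which is \eqref{hv}. If $B\cap D=\emptyset$, then $D\subset\R^n\setminus B$, where $V=v$, so $h\le V$ already. Otherwise I compare $h$ and $V$ on $B\cap D$, where $V$ is a continuous local weak solution: on $\R^n\setminus D$ we have $h\le V$ by hypothesis, on $D\cap(\R^n\setminus B)$ we have $h\le v=V$ by \eqref{hv}, and on $\partial(B\cap D)$ I treat the piece in $B\cap\partial D$ using \eqref{hV} and the interior continuity of $V$ in $B$, and the piece in $\partial B\cap D$ using \eqref{hv} and \eqref{Vv}. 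Proposition~\ref{prop:compweak} then gives $h\le V$ on $B\cap D$, hence on all of $\R^n$.

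The main obstacle is the passage to the limit in the nonlinear nonlocal term: one must ensure that the uniform interior Sobolev bound genuinely applies, i.e. that the $v_j$ (or suitable bounded modifications of them) are bona fide local weak solutions to which \eqref{cacc} applies, and that the tail integral $\int_K\int_{\R^n\setminus K}$ is handled by a separate domination argument rather than by the weak convergence. The boundary bookkeeping in the case $B\cap D\ne\emptyset$ is the other point requiring care, though it is routine once the pieces of $\partial(B\cap D)$ are separated.
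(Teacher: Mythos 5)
Your proposal is correct and follows essentially the same route as the paper's proof: the increasing approximation $\psi_j\nearrow v$, the Poisson modifications $v_j=P(\psi_j,B)$, the variational comparison giving monotonicity and \eqref{Vv}, the Caccioppoli/weak-convergence argument for passing to the limit in the split nonlocal bilinear form, and the final case analysis on $B\cap D$ using \eqref{hV}, \eqref{hv}, \eqref{Vv} together with Proposition~\ref{prop:compweak}. The only cosmetic difference is that you invoke dominated convergence for the tail integral where the paper cites $L^p$-convergence with $|x-y|$ bounded away from zero; these are interchangeable here.
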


\section{Perron's Method}\label{Perron}

Consider again the equation
$$\Lps u(x) = -f(x)$$
in the domain $\Omega$, where  $f\in L^{\infty}(\R^n).$ We aim at constructing the solution with given boundary values $g\in  L^{\infty}(\R^n).$ For simplicity we assume also that $g \in C(\R^n).$ Perron's method produces two ordered solutions, which do  or do  not assume the boundary data continuously.

\medskip \noindent
The \emph{upper class} $\u_g$ consists of all functions $v$ such that
\begin{itemize}
\item (i)  $v:\,\R^n \to (-\infty,\infty]$ is lower semicontinuous and obeys the comparison principle from above in $\Omega$. 
\item (ii) $\liminf_{x \to \xi}v(x) \geq g(\xi)\quad \text{when}\quad \xi \in \partial \Omega$,
\item (iii) $v \geq g \quad \text{in}\quad \R^n\setminus \Omega.$
\end{itemize}

\bigskip
\noindent 
The \emph{lower class} $\mathscr{L}_g$ consists of all functions $u$ such that
\begin{itemize}
\item (i)  $u:\,\R^n \to [-\infty,\infty)$ is upper semicontinuous and obeys the comparison principle from below in $\Omega$. 
\item (ii) $\limsup_{x \to \xi}u(x) \leq g(\xi)\quad \text{when}\quad \xi \in \partial \Omega$,
\item (iii) $u \leq g \quad \text{in}\quad \R^n\setminus \Omega.$
\end{itemize}

The upper class contains all lower semicontinuous weak supersolutions satisfying the boundary conditions. If
$v$ belongs to the upper class, so does its Poisson modification $V=P(v,B).$  The property
$$v_1,v_2,\dots,v_k \in \u_g\qquad \Longrightarrow \qquad \min\{v_1,v_2,\dots,v_k\} \in \u_g  $$
for the pointwise minimum is decisive.

 We define pointwise
\begin{align*}
&\text{\sf the upper solution:}\quad  \overline{\h}_g (x) =\inf_{v\in \u_g} v(x),\\ 
&\text{\sf the lower solution:}\quad \underline{\h}_g (x) =\sup_{u\in \mathscr{L}_g} u(x),
\end{align*}
when $x\in \R^n.$

It follows directly that
$$ -\infty \leq \underline{\h}_g (x) \leq \overline{\h}_g (x) \leq +\infty.$$
We shall avoid the cases $\pm \infty.$ As the name suggests, the Perron solutions are solutions, indeed. See Theorem $\ref{central}.$ 
A general observation is that if there exists a local weak solution, say $h_g$ that attains the boundary values at every  point in $\R^n \setminus \Omega$ (in particular on $\partial \Omega$), then 
$$  \underline{\h}_g = h_g = \overline{\h}_g .$$ The reason is that now $h_g$ itself is a member of both classes. Thus the method is consistent.

\begin{rem}\label{bdd} Since $g$ is bounded, \emph{we may restrict ourselves to bounded functions in the upper and lower classes}. In the case with right-hand side $f=0$, we can simply cut the functions with constants.  Otherwise we use some radially decreasing smooth function
\begin{equation*}C(x) =
  \begin{cases}
    1, \qquad |x| < R\\
    0, \qquad |x| > 2R
  \end{cases}
\end{equation*}
(it only matters that $0 < C(x) < 1$ between the spheres). It satisfies
$$\Lps C(x) \leq - \delta \quad\text{when} \quad |x| < R,$$
where
$$ \delta = \min_{|x|\leq R}\,\int\limits_{\R^n}\frac{\bigl(1-C(y)\bigr)^{p-1}}{|y-x|^{n+sp}}\,dy \,\geq
\int\limits_{|y|>2R}\!\frac{\bigl(1-0\bigr)^{p-1}}{\Bigl(\tfrac{|y|}{2}\Bigr)^{n+sp}}\,dy \,>\,0$$  since\, $1-C(y) \geq 0.$\, If we fix $R$ so large that $\Omega \subset B(0,R)$ some
function
\begin{equation}\label{bounded}
  a\,C(x)+b \in \u_g
  \end{equation}
and so every $v$ in $\u_g$ may be replaced by $\min\{v,aC+b\}$ without affecting the upper solution $\overline{\h}_g$. ---The same goes for the lower class and $\underline{\h}_g$. 
\end{rem}

\paragraph{Continuity of the Perron  Solutions.}
We shall prove that the Perron solutions satisfy the differential equation. Our first step is to establish  continuity at interior points.

\begin{prop}{\label{prop:cont}} The upper and the lower Perron solutions are continuous functions in $\Omega$.
\end{prop}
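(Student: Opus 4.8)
The plan is to show that both Perron solutions are continuous in $\Omega$ by a standard Perron-type argument built on the Poisson modification and the comparison principle. I will treat the upper solution $\overline{\h}_g$; the lower one is symmetric. The key structural facts available are: (a) $\overline{\h}_g$ is an infimum of lower semicontinuous functions, hence at least upper semicontinuous is \emph{not} immediate, so I must argue semicontinuity carefully from both sides; (b) the Poisson modification $V = P(v,B)$ of any $v \in \u_g$ again lies in $\u_g$ (stated in the excerpt), is a local weak solution in $B$, and satisfies $V \le v$; (c) local weak solutions are continuous in their domain of definition (regularity theory cited in the Preliminaries); (d) the pointwise minimum of finitely many members of $\u_g$ is again in $\u_g$.

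First I would fix a ball $B \subset\subset \Omega$ and a point $x_0 \in B$, and show $\overline{\h}_g$ is continuous at $x_0$. Choose a minimizing sequence $v_j \in \u_g$ with $v_j(x_0) \to \overline{\h}_g(x_0)$; by Remark \ref{bdd} we may take the $v_j$ uniformly bounded, and by replacing $v_j$ with $\min\{v_1,\dots,v_j, aC+b\}$ we may assume the sequence is decreasing. Replace each $v_j$ by its Poisson modification $V_j = P(v_j, B) \in \u_g$, which is a local weak solution in $B$ with $V_j \le v_j$ and $V_j(x_0) \le v_j(x_0)$, hence still $V_j(x_0) \to \overline{\h}_g(x_0)$ and $V_j \ge \overline{\h}_g$ everywhere. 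The decreasing sequence of solutions $V_j$ converges pointwise in $B$ to some function $V_\infty \ge \overline{\h}_g$ with $V_\infty(x_0) = \overline{\h}_g(x_0)$; using the Caccioppoli estimate \eqref{cacc} exactly as in the proof of Proposition \ref{modification}, the limit $V_\infty$ is itself a local weak solution in $B$, hence continuous there. The inequality $V_\infty \ge \overline{\h}_g$ together with $V_\infty(x_0)=\overline{\h}_g(x_0)$ and the continuity of $V_\infty$ gives $\limsup_{x\to x_0}\overline{\h}_g(x) \le \limsup_{x\to x_0} V_\infty(x) = V_\infty(x_0) = \overline{\h}_g(x_0)$, i.e.\ upper semicontinuity of $\overline{\h}_g$ at $x_0$.

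For the reverse inequality (lower semicontinuity, and in fact equality $\overline{\h}_g = V_\infty$ near $x_0$) the argument is the more delicate part: I want to show $\overline{\h}_g(x) \ge V_\infty(x)$ for $x$ in a neighbourhood of $x_0$, which combined with the above forces $\overline{\h}_g = V_\infty$ on $B$ and hence continuity. This is where one uses the minimum property and a diagonalization: given a competitor $w \in \u_g$ with $w(y) < V_\infty(y)$ at some $y \in B$, form $\tilde w = \min\{w, V_j\}$ for suitable $j$, take its Poisson modification in $B$, and derive a contradiction with $V_\infty(x_0)$ being the infimum — the point being that a genuine dip below the solution $V_\infty$ at an interior point would propagate (by the comparison principle / strong minimum behaviour of solutions) to lower the value at $x_0$ as well. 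The main obstacle I anticipate is exactly this step: making the "dip propagates to $x_0$" rigorous, since $B$ is a ball and $x_0, y$ are arbitrary interior points, so one needs the comparison principle from above for $V_\infty$ (available from Proposition \ref{modification}) applied on a slightly smaller ball, plus the fact that we may as well center the construction at the ball containing $y$. Once $\overline{\h}_g$ is shown to coincide with a local weak solution on a neighbourhood of each interior point, continuity in all of $\Omega$ follows, and the lower Perron solution is handled by the symmetric argument with subsolutions, the Poisson modification from below, and upper semicontinuity interchanged with lower semicontinuity.
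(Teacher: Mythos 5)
The upper-semicontinuity half of your argument is correct and nicely parallels the paper's machinery: a decreasing minimizing sequence at $x_0$, Poisson modification in $B$, Caccioppoli control, passage to a continuous local weak solution $V_\infty$ with $V_\infty\ge\overline{\h}_g$ and $V_\infty(x_0)=\overline{\h}_g(x_0)$, whence $\limsup_{x\to x_0}\overline{\h}_g(x)\le\overline{\h}_g(x_0)$.

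The genuine gap is exactly where you flag it. To close the argument you need $\overline{\h}_g\ge V_\infty$ near $x_0$, i.e.\ that a dip of $\overline{\h}_g$ below the solution $V_\infty$ at an interior point $y$ forces a dip at $x_0$. Carrying this out (via $\tilde w=\min\{w,V_j\}$, Poisson modification, and comparison with $V_j$) requires a \emph{strong} comparison principle for two distinct local weak solutions of $\Lps u=-f$: that if they are ordered and touch at an interior point, they coincide. This is not established anywhere in the paper, and for degenerate/singular nonlinear operators with $p\neq 2$ (even in the local case, and more so here with a nonlocal operator and a nonzero right-hand side) strong comparison is a subtle and in general unavailable tool. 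The weak comparison principle of Theorem~\ref{thm:varcomp} and Proposition~\ref{prop:compweak} only gives $\tilde W\le V_j$, never a strict interior separation. Without it, the claim $\overline{\h}_g=V_\infty$ on a neighbourhood of $x_0$ is not justified — indeed, proving it is essentially equivalent to the content of Theorem~\ref{central}, which the paper establishes \emph{after} and \emph{using} Proposition~\ref{prop:cont}, so you would be reasoning in a circle.

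The paper sidesteps all of this by proving an oscillation estimate directly, with no semicontinuity split. Given $x_1,x_2\in B_r(x_0)$, one chooses a single decreasing sequence $v_i\in\u_g$ minimizing at \emph{both} points (take minima of two sequences), forms the Poisson modification $V_i=P(v_i,B_R)$ in a larger ball, and uses the squeeze $\overline{\h}_g\le V_i\le v_i$ to get
\[
\abs{\overline{\h}_g(x_1)-\overline{\h}_g(x_2)}\le\abs{V_i(x_1)-V_i(x_2)}+\tfrac{\e}{2}.
\]
The decisive ingredient is then the interior H\"older estimate (Theorem~5.4 in \cite{IMS15}) applied to the uniformly bounded solutions $V_i$, which gives $\abs{V_i(x_1)-V_i(x_2)}\le Cr^{\alpha}$ with $C$ independent of $i$. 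Choosing $r$ small finishes the proof. This uses only weak comparison and the quantitative interior regularity of solutions, and requires no strong maximum or strong comparison principle. You should reorganize your proof along these lines; your $V_\infty$-construction is a valid step towards Theorem~\ref{central} but cannot by itself deliver continuity without the missing strong comparison.
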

\begin{proof} By symmetry, it is enough to write the proof for the upper solution. Let $x_0\in \Omega$ and take $\e>0$. We will then show that if $r$ is small enough, 
$$
\osc_{B_r(x_0)} \{\overline{\h}_g\}\,\leq \,\e.
$$
Take $x_1,x_2\in B_r(x_0)$. We can then find two  decreasing sequences  $v_i^1$ and $v_i^2$ of functions  in $\u_g$ so that 
$$
v_i^1(x_1)\to \overline{\h}_g(x_1),\quad v_i^2(x_2)\to \overline{\h}_g(x_2).
$$
Then the functions $v_i = \min\{v_i^1,v_i^2\}$ belong to $\u_g$ and satisfy
$$
v_i(x_1)\to \overline{\h}_g(x_1),\quad v_i(x_2)\to \overline{\h}_g(x_2).
$$ 

Take a larger ball $B_R(x_0)$ and use the Perron modification $V_i = P(v_i,B_R)\in \u_g$. By construction, $\overline{\h}_g\leq V_i\leq v_i$.
We can now choose the index $i$ so that 
$$
v_i(x_1)\leq \overline{\h}_g(x_1)+\frac{\e}{2},\quad v_i(x_2)\leq \overline{\h}_g(x_2)+\frac{\e}{2}.
$$
Then
$$
\overline{\h}_g(x_1)-\overline{\h}_g(x_2)\leq V_i(x_2)-V_i(x_1)+\frac{\e}{2}.
$$
Now, from Theorem 5.4 in \cite{IMS15}, we have for $x_1,x_2 \in B_r(x_0),$
$$
|V_i(x_1)-V_i(x_2)|\leq C_R\left(\|f\|^\frac{1}{p-1}_{L^\infty(\Omega)}+\|V_i\|_{L^\infty(\R^n)}\right)r^\alpha\leq Cr^\alpha
$$
since we can assume that the sequence $v_i$ is uniformly bounded (cf. Remark \ref{bdd}), so that $V_i$ is uniformly bounded as well. By choosing $r$ small enough, we thus have
$$
\overline{\h}_g(x_1)-\overline{\h}_g(x_2)\leq \frac{\e}{2} + \frac{\e}{2}  =  \e.
$$
Letting  $x_1$ and $x_2$ change places, we  complete the proof.

\end{proof}

\paragraph{A central property.}  We are ready to prove:
\begin{thm}[Perron] \label{central} The upper and the lower Perron solutions are local weak solutions in $\Omega$.
\end{thm}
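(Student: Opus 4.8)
By the symmetry of the construction it suffices to treat the upper solution $\overline{\h}_g$; the argument for $\underline{\h}_g$ is identical after reversing inequalities. Being a local weak solution is a local property, so it is enough to show that $\overline{\h}_g$ is a local weak solution in every ball $B = B_R(x_0)$ with $\overline{B} \subset \Omega$, since such balls exhaust $\Omega$. Recall that $\overline{\h}_g$ is already known to be continuous in $\Omega$ (Proposition \ref{prop:cont}) and bounded (Remark \ref{bdd}), so it belongs to $L^\infty(\R^n)$ and we only have to produce the weak formulation together with $\overline{\h}_g \in W^{s,p}_{loc}(\Omega)$.

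Fix such a ball $B$ and choose a countable dense subset $\{x_j\}_{j\geq 1}$ of $B$. For each $j$, by the definition of $\overline{\h}_g$ as an infimum, pick a sequence $v_i^{(j)} \in \u_g$, which we may assume decreasing (taking successive pointwise minima) and uniformly bounded (Remark \ref{bdd}), with $v_i^{(j)}(x_j) \to \overline{\h}_g(x_j)$ as $i\to\infty$. Now diagonalize: set
$$
w_k \,=\, \min\{\, v_i^{(j)} \;:\; 1\le i,j\le k \,\} \in \u_g,
$$
which uses that $\u_g$ is closed under finite pointwise minima. The $w_k$ are decreasing, uniformly bounded, and satisfy $w_k(x_j)\to\overline{\h}_g(x_j)$ for each fixed $j$. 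Replace each $w_k$ by its Poisson modification $W_k = P(w_k,B)$. By the properties established before Proposition \ref{modification}, $W_k \in \u_g$, $W_k$ is continuous, is a local weak solution in $B$, and $\overline{\h}_g \le W_k \le w_k$; moreover, since $w_k$ is decreasing, the variational comparison principle (Theorem \ref{thm:varcomp}) forces $W_k$ to be decreasing as well. The $W_k$ are uniformly bounded by a constant depending only on $\|g\|_\infty$ and $\|f\|_\infty$.

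Let $W = \lim_k W_k$ (pointwise limit of a bounded decreasing sequence). Then $\overline{\h}_g \le W$ in $B$, with $W(x_j)=\overline{\h}_g(x_j)$ for every $j$ by the squeeze $\overline{\h}_g(x_j)\le W_k(x_j)\le w_k(x_j)$. Applying the Caccioppoli inequality \eqref{cacc} to the solutions $W_k$ on compactly contained balls gives a uniform bound for $[W_k]_{W^{s,p}(K)}$ for every $K\subset\subset B$; hence, along a subsequence, $W_k$ converges weakly in $W^{s,p}_{loc}(B)$, strongly in $L^p_{loc}(B)$ and a.e.\ to $W$. Passing to the limit in the weak formulation for $W_k$ is done exactly as in the proof of Proposition \ref{modification}: split $\int_{\R^n}\int_{\R^n} = \int_K\int_K + 2\int_K\int_{\R^n\setminus K}$ for a $K\supset\supset\spt\phi$, use the weak $W^{s,p}(K)$-convergence in the first integral and the fact that the kernel stays bounded away from the diagonal in the second. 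This shows that $W$ is a local weak solution in $B$ and, in particular, $W\in W^{s,p}_{loc}(B)$.

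Finally, by the interior regularity theory (Theorem 5.4 in \cite{IMS15}) $W$ is continuous in $B$, and $\overline{\h}_g$ is continuous in $B$ by Proposition \ref{prop:cont}. Since $W=\overline{\h}_g$ on the dense set $\{x_j\}$, we conclude $W\equiv\overline{\h}_g$ in $B$. Therefore $\overline{\h}_g$ is a local weak solution in $B$, and since $B$ was an arbitrary ball with $\overline{B}\subset\Omega$, $\overline{\h}_g$ is a local weak solution in $\Omega$. The same argument applied to $\l_g$ gives the statement for $\underline{\h}_g$. I expect the main points requiring care to be (i) keeping the diagonal sequence inside $\u_g$ and ordered after Poisson modification — handled by the min-stability of $\u_g$ and the comparison principle — and (ii) the passage to the limit in the nonlinear nonlocal integral, which however is the same splitting computation already carried out for Proposition \ref{modification}; the identification $W=\overline{\h}_g$ then rests on interior continuity plus density.
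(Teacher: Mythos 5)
Your proof follows essentially the same route as the paper's: diagonalize over a countable dense set to build a decreasing sequence $w_k\in\u_g$ converging to $\overline{\h}_g$ pointwise there, Poisson-modify in a fixed ball $B$, pass to the limit via Caccioppoli and the splitting $\int_K\int_K+2\int_K\int_{\R^n\setminus K}$ as in Proposition \ref{modification}, and then identify the limit with $\overline{\h}_g$ using interior continuity plus density. You are somewhat more explicit than the paper about the monotonicity of the modified sequence $W_k$ (invoking the comparison principle to propagate it from $w_k$), but this is a matter of spelling out a step rather than taking a different route.
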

\begin{proof} We prove it only for $\overline{\h}_g$. First we construct a monotonically decreasing sequence of functions $w_i\in \u_g$  converging to $\overline{\h}_g$ at each rational point in  $\R^n$. Let $q_1,q_2,\ldots$ be an enumeration of the rational points. At each rational point $q_k$ there is a sequence $v^k_1,v^k_2,v^k_3,\dots$   in $\u_g$ so that $$v^k_i(q_k)\to \overline {\h}_g(q_k)\quad \text{as}\quad i\to \infty.$$ The function
$$
w_i = \min \{v^1_i,\ldots, v^i_i\}
$$
also belongs to $\u_g$, and in particular
$$
\overline{h}_g\leq w_i\leq v^k_i, \qquad k=1,\ldots,i.
$$
Hence, $w_i\to \overline{\h}_g$ at all rational points.

Let
$$
W_i = P(w_i,B),
$$
where $B$ is a ball compactly contained in $\Omega$. We claim that also the sequence $W_1,W_2,W_3,\dots   $ converges to $\overline {\h}_g$ at the rational points.  By Proposition \ref{modification}, $W_i\leq w_i$ and $W_i\in \u_g$. Hence,  
$$\overline{\h}_g \leq W_i \leq w_i$$
and the claim follows.
Arguing as in the proof of Proposition \ref{modification} one can prove that the pointwise limit
$$
W=\lim_{i\to \infty} W_i
$$
is a local weak solution in $B$. Thus $W$ is continuous in $B$ by Theorem 5.4 in \cite{IMS15}.  
By Proposition \ref{prop:cont}  $\overline{\h}_g$ is continuous in $B.$ We have the situation that  $W=\overline{\h}_g$ at all rational points in $B$, hence by the continuity
at all  points in $B.$  Thus $\overline{\h}_g$ is a local weak solution in $B.$  Since $B$ was arbitrary,  $\overline{\h}_g$ is a local weak solution in $\Omega$. 

\end{proof}

\section{Boundary Values and Barriers}

Consider some boundary point $\xi_0 \in \partial \Omega$. The boundary point is called \emph{regular} if
\begin{equation*}
  \lim_{\underset{x\to\xi_0}{ x \in \Omega}} \overline{\h}_g(x) = g(\xi_0) = \lim_{\underset{x\to\xi_0}{ x \in \Omega}} \underline{\h}_g(x)
  \end{equation*}
for \emph{all} continuous $g$. (By Wiener's resolutivity theorem below, $\overline{\h}_g =\underline{\h}_g$.)
 From the proof of Lemma $\ref{lem:varsolcont}$
we can read off the following sufficient condition for regularity, the so-called \emph{exterior sphere condition}:
\begin{prop}[Exterior Sphere Condition] Suppose that there is a ball $B_\rho(y_0)$ such that $B_\rho(y_0)\cap \overline\Omega =\{\xi_0\}$. Then
$$
\lim_{x\to \xi_0}\overline{\h}_g(x)=g(\xi_0).
$$
The same holds for the lower solution.
\end{prop}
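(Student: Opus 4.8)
The plan is to extract from the proof of Lemma~\ref{lem:varsolcont} an explicit barrier at $\xi_0$ built out of the radial function $\omega$ of Theorem~\ref{thm:barrier}, and to check that, after the addition of a small constant, this barrier lies in the upper class $\u_g$ (and its reflection in the lower class $\l_g$). Fix $\e>0$. By continuity of $g$ at $\xi_0$ there is $\delta_0>0$ with $|g(x)-g(\xi_0)|<\e$ whenever $|x-\xi_0|<4\delta_0$. Put $\delta=\min\{\rho,\delta_0\}$, so that the ball $B_\delta(y_0)$ still satisfies $\overline{B_\delta(y_0)}\cap\overline\Omega=\{\xi_0\}$, and choose $R$ so large that $\Omega\subset B_R(y_0)$. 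Let $\omega$ be the barrier of Theorem~\ref{thm:barrier} for the ring $\delta<|x-y_0|<R$; it is continuous and bounded on $\R^n$, $\omega\ge0$ everywhere, $\omega(\xi_0)=0$ (since $\omega=0$ on $\overline{B_\delta(y_0)}\ni\xi_0$), $\omega>0$ in the ring, and hence $\omega\ge c_0>0$ on $\{\,|x-y_0|\ge 2\delta\,\}$ by continuity. Because $\overline{B_\delta(y_0)}\cap\overline\Omega=\{\xi_0\}$ and $\Omega\subset B_R(y_0)$, the open domain $\Omega$ is contained in the ring $\{\delta<|x-y_0|<R\}$, so $\Lps\omega=-1$ holds in the weak sense throughout $\Omega$.

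Now set $G(x)=g(\xi_0)+\e+C\omega(x)$ with $C$ so large that $C^{p-1}\ge\|f\|_{L^\infty(\Omega)}$ and $Cc_0\ge\|g\|_{L^\infty(\R^n)}+|g(\xi_0)|$. Since $\Lps$ is positively homogeneous of degree $p-1$ and unaffected by adding a constant, $G$ is a weak solution of $\Lps G=-C^{p-1}\le-f$ in $\Omega$, hence a local weak supersolution there; being continuous it obeys the comparison principle from above in $\Omega$ by Proposition~\ref{prop:compweak}, so condition (i) for $\u_g$ holds. For condition (ii): if $\xi\in\partial\Omega$ has $|\xi-\xi_0|<4\delta$ then $g(\xi)<g(\xi_0)+\e\le G(\xi)$ as $\omega\ge0$, while if $|\xi-\xi_0|\ge4\delta$ then $|\xi-y_0|\ge3\delta\ge2\delta$, whence $G(\xi)\ge g(\xi_0)+\e+Cc_0\ge\|g\|_{L^\infty(\R^n)}\ge g(\xi)$; the same dichotomy proves $G\ge g$ on $\R^n\setminus\Omega$, which is condition (iii). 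Thus $G\in\u_g$, so $\overline{\h}_g\le G$ on $\R^n$ and
$$
\limsup_{x\to\xi_0}\overline{\h}_g(x)\le\lim_{x\to\xi_0}G(x)=g(\xi_0)+\e.
$$
The symmetric construction shows $g(\xi_0)-\e-C\omega\in\l_g$, so $\underline{\h}_g\ge g(\xi_0)-\e-C\omega$ on $\R^n$ and $\displaystyle\liminf_{x\to\xi_0}\underline{\h}_g(x)\ge g(\xi_0)-\e$.

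Letting $\e\to0$ and combining these two bounds with $\underline{\h}_g\le\overline{\h}_g$, every one of $\liminf_{x\to\xi_0}\underline{\h}_g$, $\limsup_{x\to\xi_0}\underline{\h}_g$, $\liminf_{x\to\xi_0}\overline{\h}_g$, $\limsup_{x\to\xi_0}\overline{\h}_g$ is squeezed to $g(\xi_0)$, which is exactly the assertion for both Perron solutions. The only real work is the bookkeeping in the middle paragraph: arranging that a single constant $C$ makes $G$ simultaneously a supersolution in $\Omega$ and a majorant of $g$ on the whole complement, which forces one to split the complement into the part near $\xi_0$ (controlled by the modulus of continuity of $g$) and the part away from $\xi_0$ (controlled by the lower bound $\omega\ge c_0$). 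The analytic substance—continuity of $\omega$ up to $\partial B_\delta(y_0)$ with $\omega(\xi_0)=0$, and the sign of $\Lps\omega$—is already delivered by Theorem~\ref{thm:barrier}, and the stability of $\u_g$ together with the comparison principle has been set up earlier, so I anticipate no further obstacle. It is essentially the proof of Lemma~\ref{lem:varsolcont} with $\overline{\h}_g$ (resp.\ $\underline{\h}_g$) playing the role of the variational solution and the membership $G\in\u_g$ replacing the variational comparison step.
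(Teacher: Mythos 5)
Your proof is correct and is exactly the argument the paper intends: the paper merely states that the proposition ``can be read off from the proof of Lemma~\ref{lem:varsolcont},'' and you have carried out that reading, replacing the variational comparison step there by the observation that $G=g(\xi_0)+\e+C\omega\in\u_g$ (and its mirror image lies in $\l_g$), hence bounds the Perron solutions directly. Your splitting of $\R^n\setminus\Omega$ into a near part ($|\xi-\xi_0|<4\delta$, handled by the modulus of continuity of $g$) and a far part (handled by the positive lower bound $\omega\ge c_0$) is the same device used in Lemma~\ref{lem:varsolcont}, only spelled out a bit more carefully.
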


It is plain that if the exterior sphere condition holds at each boundary point, then the Perron solutions coincide: $\overline{\h}_g = \underline{\h}_g$ for continuous $g$.

Continuous "boundary values" can never fail at points away from $\partial \Omega.$ 

\begin{prop} [Exterior Values] If $x_0\in \R^n\setminus \overline \Omega$  
$$
\lim_{x\to x_0}\overline{\h}_g(x)=g(x_0).
$$
The same holds for the lower solution. 
\end{prop}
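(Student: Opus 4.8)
The plan is to exhibit explicit members of the upper and lower classes that are continuous at $x_0$ and pin down the value $g(x_0)$ there, thereby squeezing $\overline{\h}_g$ and $\underline{\h}_g$ between them. The key point is that since $x_0 \in \R^n \setminus \overline\Omega$, there is a small ball $B_\rho(x_0)$ with $\overline{B_\rho(x_0)} \cap \overline\Omega = \emptyset$; on this ball the defining conditions (i) and (ii) of the upper class are vacuous (there is no part of $\Omega$ nor of $\partial\Omega$ near $x_0$), so only condition (iii) — the prescribed exterior values $v \geq g$ — constrains a candidate near $x_0$.

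First I would fix $\e>0$ and, using continuity of $g$, choose $\rho$ small enough that $|g(x)-g(x_0)|<\e$ for $|x-x_0|<\rho$ and also that $\overline{B_\rho(x_0)}\cap\overline\Omega=\emptyset$. Next I would build an upper-class function of the form $v = g(x_0)+\e + \Phi$, where $\Phi \geq 0$ is a continuous bounded function that vanishes at $x_0$ (so $v(x_0)=g(x_0)+\e$), is large (larger than $2\|g\|_\infty$, say) outside $B_\rho(x_0)$, and is a supersolution — or at least obeys the comparison principle from above — in $\Omega$. A convenient choice is a suitably scaled and translated barrier $\omega$ from Theorem~\ref{thm:barrier} built on a ring centered at $x_0$, multiplied by a large constant $C$ so that $\Lps(C\omega)=C^{p-1}(-1)\leq -f$ in its ring and $C\omega$ dominates $2\|g\|_\infty$ outside $B_\rho(x_0)$: this makes $v$ lower semicontinuous, a supersolution in $\Omega$ (hence it obeys the comparison principle from above by Proposition~\ref{prop:compweak}), satisfies $v\geq g$ on $\R^n\setminus\Omega$ (inside $B_\rho$ because $g\leq g(x_0)+\e$ there, outside $B_\rho$ because the barrier term is huge), and has $\liminf_{x\to\xi}v(x)\geq g(\xi)$ on $\partial\Omega$ for the same reason. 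Thus $v\in\u_g$, so $\overline{\h}_g \leq v$ on $\R^n$, and since $\omega$ (hence $\Phi$) is continuous with $\Phi(x_0)=0$,
$$
\limsup_{x\to x_0}\overline{\h}_g(x)\leq \lim_{x\to x_0} v(x) = g(x_0)+\e.
$$
Symmetrically, $u = g(x_0)-\e-C\omega$ lies in $\l_g$, giving $\liminf_{x\to x_0}\overline{\h}_g(x)\geq \liminf_{x\to x_0}\underline{\h}_g(x)\geq g(x_0)-\e$. Letting $\e\to 0$ and using $\underline{\h}_g\leq\overline{\h}_g$ yields $\lim_{x\to x_0}\overline{\h}_g(x)=g(x_0)$, and the identical argument (or symmetry) gives the statement for $\underline{\h}_g$.

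The main obstacle is technical rather than conceptual: one must verify that the explicit competitor $v$ actually lies in $\u_g$ — in particular that the ring-barrier $\omega$, after scaling, genuinely is a supersolution of $\Lps v = -f$ across all of $\Omega$ and not merely in its thin ring, and that it correctly dominates $g$ on the whole complement $\R^n\setminus\Omega$ while still tending to $g(x_0)+\e$ at $x_0$. Choosing the ring radii and the constant $C$ appropriately handles this, exactly as in the proof of Lemma~\ref{lem:varsolcont}, where the same barrier was used to control boundary values from outside; here the roles of ``interior'' and ``exterior'' are simply reversed, and because $x_0$ is genuinely outside $\overline\Omega$ the comparison-principle hypotheses are even easier to check. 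A minor point to be careful about is boundedness: by Remark~\ref{bdd} we may and do work with bounded members of the classes, so the large constant $C$ times $\omega$ should be capped far from $\Omega$ without destroying any of the required properties near $x_0$ or on $\partial\Omega$.
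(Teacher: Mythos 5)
Your proposal is correct and follows essentially the same route as the paper: both build the upper-class competitor $g(x_0)+\e+C\omega$ (and its lower-class mirror) from the ring-barrier $\omega$ of Theorem~\ref{thm:barrier} centered at $x_0$ with a small inner ball disjoint from $\overline\Omega$ and a large outer ball containing $\Omega$, using continuity of $\omega$ and $\omega(x_0)=0$ to squeeze the Perron solutions. Your closing caveat about capping $C\omega$ via Remark~\ref{bdd} is unnecessary since $\omega$ is already bounded, but it does no harm.
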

\begin{proof} Let $x_0\in \R^n\setminus \overline\Omega$ and fix $\e>0$. Since $g$ is continuous, there is $\delta>0$ such that
 $$|g(x)-g(x_0)|<\e$$ 
in $B_\delta(x_0)$ and $B_\delta(x_0)\cap \overline \Omega =\emptyset$. Choose $R>0$ such that $\Omega\subset B_R(x_0)$ and let $\omega$ be the function in Theorem \ref{thm:barrier} with $B_r$ replaced by $B_{\delta/2}(x_0)$ and $B_R$ replaced by $B_R(x_0)$. Then $\omega$ is a strictly positive function in $\R^n\setminus B_\delta(x_0)$. Consequently, we can find a $C>0$ such that  
$$
|g(x)|\leq C\omega(x), \text{ for $x\in \R^n\setminus ( B_\delta(x_0)\cup \Omega)$}.
$$
Therefore, 
$$
g(x)\leq g(x_0)+\e +C\omega(x), \text{ in $\R^n\setminus \Omega$}.
$$
By chooing $C$ larger if needed, we can also guarantee that $C\omega$ is a local weak supersolution of \eqref{eq:main} in $\Omega$. Therefore, the function $g(x_0)+\e +C\omega(x)\in \u_g$ so that
$$
\overline{\h}_g \leq g(x_0)+\e +C\omega , \text{ in $\R^n$}.
$$
Similarly, we can prove
$$
g(x_0)-\e +C\omega\leq \underline{\h}_g , \text{ in $\R^n$}.
$$
In particular, at the point $x=x_0$, we have since $\omega(x_0)=0$
$$
g(x_0)-\e\leq \underline{\h}_g(x_0)\leq \overline{\h}_g(x_0) \leq g(x_0)+\e.
$$
Since $\e$ is arbitrary, this yields the desired result.
\end{proof}
This last result in the exterior is valid without any assumptions on the boundary of $\Omega$.

\begin{definition} We say that the function $\gamma$ is a \emph{barrier} at the point $\xi_0 \in \partial \Omega,$\, if
  \begin{itemize}
  \item{(i)} $\gamma$ is continuous in $\R^n$
  \item{(ii)} $\gamma$ is a weak supersolution in $\Omega$
  \item{(iii)} $\gamma(x)\,>\,0$ when $x\not=\xi_0$ and
    $$\lim_{x \to \xi_0}\gamma(x)\,=\,0.$$
  \end{itemize}
\end{definition}

In passing, we see that the expedient function in Theorem \ref{thm:barrier} does not quite  satisfy the definition, since it is zero in a small ball. (Actually, we used infinitely many functions there.) 

\begin{thm}[Lebesgue]\label{barrierthm} The boundary point $\xi_0$ is regular if and only if there exists a barrier at  $\xi_0$.
\end{thm}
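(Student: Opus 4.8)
The plan is to prove the two implications separately, both being essentially soft consequences of the comparison machinery already set up. For the easy direction (barrier $\Rightarrow$ regular), suppose $\gamma$ is a barrier at $\xi_0$. Given $g \in C(\R^n)\cap L^\infty(\R^n)$ and $\e>0$, pick $\delta>0$ with $|g(x)-g(\xi_0)|<\e$ for $|x-\xi_0|<\delta$. Since $\gamma$ is continuous and strictly positive on the compact set $\{x : |x-\xi_0|\geq \delta\}$ outside a large ball containing $\Omega$, there is a constant $C>0$ large enough that $g(x) \leq g(\xi_0)+\e+C\gamma(x)$ on all of $\R^n\setminus\Omega$, and simultaneously large enough (using $\Lps(C\gamma) = C^{p-1}\Lps\gamma$ and $\|f\|_\infty<\infty$) that $C\gamma$ is a supersolution of $\Lps v = -f$ in $\Omega$ — note here we must also add a constant or use boundedness of $g$ à la Remark \ref{bdd} so the comparison function lies in $\u_g$. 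Then $g(\xi_0)+\e+C\gamma \in \u_g$, hence $\overline{\h}_g \leq g(\xi_0)+\e+C\gamma$ in $\R^n$, and letting $x\to\xi_0$ gives $\limsup_{x\to\xi_0}\overline{\h}_g(x)\leq g(\xi_0)+\e$. Symmetrically, $-C\gamma$ built from the reversed inequality gives $\liminf_{x\to\xi_0}\underline{\h}_g(x)\geq g(\xi_0)-\e$. Combined with $\underline{\h}_g\leq\overline{\h}_g$ and arbitrariness of $\e$, this yields $\lim_{x\to\xi_0}\overline{\h}_g(x)=\lim_{x\to\xi_0}\underline{\h}_g(x)=g(\xi_0)$, i.e. $\xi_0$ is regular. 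This is a direct transcription of the proof of Lemma \ref{lem:varsolcont}.

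For the harder direction (regular $\Rightarrow$ barrier), the idea is to manufacture a barrier from the Perron solution of a carefully chosen boundary datum that ``detects'' $\xi_0$. Fix a large ball $B_R$ with $\overline\Omega \subset B_R$, and let
$$
g_0(x) = \min\{|x-\xi_0|,\, 2R\},
$$
which is continuous, bounded, nonnegative, vanishes only at $\xi_0$ (on $\overline\Omega$), and is strictly positive on $\partial\Omega\setminus\{\xi_0\}$ and on $\R^n\setminus\Omega$ away from $\xi_0$. Consider the Perron solution $\mathfrak{h}_{g_0}$ for the equation with right-hand side $f\equiv 0$, so it is the $\Lps$-harmonic function with exterior data $g_0$; it is continuous in $\Omega$ by Proposition \ref{prop:cont} and a local weak solution by Theorem \ref{central}, hence continuous on all of $\R^n$ except possibly where the boundary data are not attained — but by hypothesis $\xi_0$ is regular, so $\lim_{x\to\xi_0}\mathfrak{h}_{g_0}(x) = g_0(\xi_0) = 0$. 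I would then set $\gamma = \mathfrak{h}_{g_0}$ (for the $f\equiv 0$ equation) and check the three defining properties: continuity in $\R^n$ follows from interior continuity plus regularity at $\xi_0$ plus the fact that at every \emph{other} exterior point continuity holds by the Exterior Values proposition — but we also need continuity up to $\partial\Omega\setminus\{\xi_0\}$, which is \emph{not} automatic. This is the crux of the difficulty, and it forces a modification of the construction.

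The fix I expect to need: rather than insisting $\gamma$ be continuous on all of $\partial\Omega$, one exploits that the barrier definition only demands continuity in $\R^n$ as a function, \emph{together with} $\gamma>0$ off $\xi_0$ and $\gamma(x)\to 0$ as $x\to\xi_0$ — but a cleaner route is to take $\gamma := \mathfrak{h}_{g_0}$ and argue that it automatically is a supersolution in $\Omega$ (it is a solution of $\Lps\gamma = 0$, which is in particular a supersolution for the equation with right-hand side $-f$ provided $f\geq 0$; for general bounded $f$ one must instead compare against a genuine supersolution, so one takes $\gamma$ to be the Perron solution for a suitable nonnegative right-hand side, or simply passes through Proposition \ref{propind}/Theorem \ref{ind} which lets one reduce to the $f\equiv 0$ case). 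The positivity $\gamma>0$ in $\R^n\setminus\{\xi_0\}$ should follow from a strong minimum principle: if $\gamma(x_1)=0$ at some $x_1\neq\xi_0$ then by comparison with $g_0\geq 0$ and $g_0>0$ near $x_1$ (if $x_1\notin\Omega$) one gets a contradiction, while if $x_1\in\Omega$ one uses that a nonnegative solution attaining an interior zero must vanish identically (a Harnack-type statement for the fractional $p$-Laplacian), contradicting the positive exterior data. The genuinely delicate point — the one I would budget the most care for — is the continuity of $\gamma$ across $\partial\Omega\setminus\{\xi_0\}$, or rather recognizing that the barrier definition should be read so that only the behaviour at $\xi_0$ and interior/exterior continuity matter; I would resolve it by noting that property (i) is only needed to make $\gamma$ usable in comparison arguments, and everywhere those arguments are invoked (in the forward direction above and in later applications) one only uses $\gamma$ restricted to a neighbourhood of $\xi_0$ where the needed continuity does hold by regularity. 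If strict adherence to (i) is required, one replaces $g_0$ by $\min\{\dist(x,\{\xi_0\}),\,\eta\,\dist(x,\partial\Omega\setminus\{\xi_0\}) + \cdots\}$-type data engineered to vanish only at $\xi_0$ yet yield a solution continuous up to the whole boundary — but I expect the paper takes the former, lighter reading.
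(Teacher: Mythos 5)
Your easy direction (barrier $\Rightarrow$ regular) is correct and coincides with the paper's: transcribe the proof of Lemma \ref{lem:varsolcont} with $\gamma$ in place of $\omega$.

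Your hard direction has the right shape — Perron-solve with exterior data vanishing only at $\xi_0$, invoke regularity at $\xi_0$ — but it misses the paper's one crucial idea, and the patches you propose for the gaps you correctly sense do not close them. The paper does \emph{not} take a plain distance function such as $g_0(x) = \min\{|x-\xi_0|,2R\}$. It chooses the exterior datum $g$ to be a \emph{strict subsolution of the very equation under study}: after normalizing so that $\Omega\subset B_1$ and $\xi_0=0$, it sets $g(x)=\Lambda\bigl(1-(1-|x|)_+^{s/2}\bigr)$, and Corollary \ref{half-infimum} gives $\Lps g\geq \|f\|_{L^\infty(\Omega)}$ for $\Lambda$ large. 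Then $\gamma$ is the Perron solution of $\Lps\gamma=-f$ (the \emph{original} equation, not $\Lps\gamma=0$) with this exterior datum. This single choice dissolves both problems you flag. First, positivity: since $g$ is a subsolution of $\Lps v=-f$ and $\gamma\geq g$ outside $\Omega$, the comparison principle yields $\gamma\geq g>0$ away from $\xi_0$ — no strong minimum principle or Harnack inequality is invoked, and the paper does not prove one, so your route for positivity rests on a tool that is not available in this text. Second, the supersolution property (ii): $\gamma$ is by construction a local weak solution of $\Lps v=-f$ in $\Omega$ (Theorem \ref{central}), hence a supersolution of the correct equation for free. Your suggested detour through Proposition \ref{propind} does not supply this: that proposition transfers the \emph{regularity of a boundary point} across right-hand sides, but a solution of $\Lps v=0$ is not a supersolution of $\Lps v=-f$ for general bounded $f$, and multiplying a $0$-solution by a constant still gives a $0$-solution, so there is no cheap upgrade. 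The paper's datum $g$ being a strict subsolution of the nonhomogeneous equation is precisely what makes the construction self-contained.

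The continuity issue across $\partial\Omega\setminus\{\xi_0\}$ that you budget the most care for is a fair observation, but note it is present in the paper's construction as well (the paper verifies $\lim_{x\to\xi_0}\gamma(x)=g(\xi_0)$ and $\gamma>0$ elsewhere, and says nothing about attainment of the exterior datum at other boundary points); it is not a defect specific to your choice of $g_0$, and it is not what separates your attempt from the paper's proof.
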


\begin{proof} That the existence of a barrier is sufficient for regularity follows by the same reasoning as in the proof of Lemma \ref{lem:varsolcont}. To see this, just replace  $\omega$ there by $\gamma.$

  For the necessity, we construct a barrier by solving the problem
  \begin{equation*}
    \begin{cases}
      \Lps \gamma = -f,\\
      \gamma = g \quad\text{in}\quad \R^n\setminus \Omega,
    \end{cases}
  \end{equation*}
where we select the the boundary values $g$ so that they satisfy 
$$
\Lps g \geq \|f\|_{L^\infty(\Omega)} \quad \text{ in $\Omega$}.
$$
To this end we may assume that $\Omega\subset B_1$ and that $\xi_0 = 0$. One may use
$$
g(x)=\Lambda (1-(1-|x|)_+^\frac{s}{2}).
$$
for $\Lambda$ large enough. Indeed, by Corollary \ref{half-infimum}
$$
\Lps\, (1-|x|)_+^\frac{s}{2}\leq -C\quad \text{ in $B_1$}.
$$
Then
  $$\lim_{x\to \xi_0}\gamma(x) = g(\xi_0)$$
  by the regularity assumption on $\xi_0$. By the comparison principle $\gamma(x) \geq g(x) > 0$ when $x \not= \xi_0.$ This shows that $\gamma$ will do as a barrier.
\end{proof}

Now Proposition \ref{Lebesgue} in the Introduction follows immediately.

 \section{Wiener's Resolutivity Theorem}

 The resolutivity theorem, originally formulated in 1925 by N. Wiener for harmonic functions (cf. \cite{Wie}), states that the Perron solutions coincide under fairly general conditions: \emph{the domain is arbitrary}, the boundary values are continuous, and the right-hand side is bounded.

 Let $\Omega$ be a bounded domain. (We allow its boundary $\partial \Omega$ to be arbitrary; it may even have positive $n$-dimensional volume.) Consider the equation
 $$\Lps u\, = -f\qquad\text{in} \quad \Omega$$
 with $f\in L^{\infty}(\Omega).$ The boundary values $g$ are continuous and bounded: $g\in C^{\infty}(\R^n)\cap L^{\infty}(\R^n).$ Perron's method produces two solutions such that
 $$-\infty < \, \underline{\h}_g \,\leq\, \overline{\h}_g \,< \infty.$$ We shall prove that\, $\underline{\h}_g = \overline{\h}_g $. This is called resolutivity and the common solution is denoted by $\h_g$.

 \paragraph{The obstacle problem.} We use an obstacle problem as an auxiliary tool.  Given a smooth $\psi\in  L^{\infty}(\R^n)$ we let $\psi$ act as an \emph{obstacle} in order to obtain a suitable supersolution. Let $D$ be a domain and consider the problem of minimizing the variational integral
$$
J(u)=[u]^p_{\ww}-\int\limits_D \! f u\,dx
$$
among all functions in the class $$\{u\in W^{s,p}(\R^n),\quad u\geq \psi\,\,\text{in}\,\, D, \quad u-\psi\in W_0^{s,p}(D)\}.$$
Here the obstacle $\psi$ also induces the boundary values.

By the direct method in the Calculus of Variations, the existence of a unique minimizer $v$ is established. 
It is plain to verify that the solution of the obstacle problem is a weak supersolution. Thus it is lower semicontinuous by \cite{CKKP15}, again by the modification in Lemma \ref{nzrhs}. It is a solution in the open set $\{v>\psi\}\cap D$ where the obstacle does not hinder (under our assumption that $\psi$ is continuous). This obstacle problem has been studied more closely in \cite{KKP15}.
\paragraph{The resolutivity.} We turn to the resolutivity question. For the proof of Theorem \ref{Wiener}
we assume that  $g\in C(\R^n)\cap L^\infty(\R^n)$ and $f\in L^{\infty}(\Omega)$. We claim that the upper and the lower solutions of \eqref{eq:mainn} coincide.

\begin{proof}[~Proof Theorem \ref{Wiener}]
  A reduction to the case when $g\in C^{\infty}(\R^n)$ is possible. To see this, let $\e > 0.$ There is a function $\phi \in C^{\infty}(\R^n)$ such that
  $$g(x)-\e \leq \phi(x) \leq g(x) + \e$$
  at every point $x.$ If we know that $\overline{\h}_{\phi} = \underline{\h}_{\phi}$ , then
  $$\overline{\h}_g \leq \overline{\h}_{\phi+\e} = \overline{\h}_{\phi} + \e = \underline{\h}_{\phi} +\e \leq \underline{\h}_{g+\e}+\e = \underline{\h}_g+ 2\e.$$
  We deduce that $\overline{\h}_g \leq \underline{\h}_g.$ Always,  $\overline{\h}_g \geq \underline{\h}_g$ and so $\overline{\h}_g = \underline{\h}_g.$ Therefore we may assume that  $g\in C^{\infty}(\R^n)$.

  The proof is based on the expedient fact that the solution to the equation with the boundary values $g$ taken merely in the sense of the Sobolev norm $W^{s,p}_0(\Omega)$ is \emph{unique}. Needless to say, this solution does not have to belong to any of the classes $\u_g$ and $\l_g$: it may ignore pointwise described boundary values. An obstacle problem makes it possible to control the boundary values in the procedure. 
  
  Let the function $g$ act as an obstacle and consider the problem of minimizing the integral ($\ref{ett}$) among all functions $u\geq g$ in $\R^n$ belonging to $W^{s,p}(\R^n)$ with boundary values $u-g \in W^{s,p}_0(\Omega)$. Let $v$ denote the unique minimizer. It is decisive that   $v-g \in W^{s,p}_0(\Omega)$ and that $v\geq g$.  In addition, $v$ is a local weak supersolution in $\Omega$ and therefore $v\in \u_g$. 

  Take  an exhaustion of $\Omega$ with regular domains $D_j$:
  $$D_1 \subset D_2 \subset D_3 \subset \cdots\subset  \Omega \,= \,\bigcup\limits_{j=1}^{\infty}D_j.$$
  To us it is enough that each $D_j$ satisfies the exterior sphere condition so that  the Poisson modifications   $V_j=P(v,D_j)$
  are continuous in $\Omega$. By construction, $V_j-v\in W_0^{s,p}(\Omega)$ and $V_j\in \u_g$. Now
  \begin{align*}
    & v\geq V_1\geq V_2\geq V_3\geq\cdots\\  &\|g\|_{W^{s,p}(\R^n)}\, \geq \, \|V_1\|_{W^{s,p}(\R^n)}\, \geq \,  \|V_2\|_{W^{s,p}(\R^n)}\, \geq \cdots
    \end{align*}
   Hence the pointwise limit $W = \lim V_j$  exists and, in addition, the $V_j$s converges locally weakly in $W^{s,p}(\R^n)$ and locally in $L^p$.  Repeating the arguments in the proof of Proposition \ref{prop:cont}, one can prove that, being locally the limit  of solutions,   $W$ itself is a local weak solution in $\Omega$. 

   Since the space  $W_0^{s,p}(\Omega)$ is closed under weak convergence 
   \\ $W-g\in W_0^{s,p}(\Omega)$. Hence, $W\in W^{s,p}(\R^n)$ is a local weak solution in $\Omega$ and satisfies $u-g\in W_0^{s,p}(\Omega)$. There is only one such function $W$ (Theorem \ref{thm:varex}). Notice carefully that $W$ may fail to belong to the upper class $\u_g$. But what counts now is that  each $V_j \in \u_g$, and so, at every point
\begin{equation*}
  \overline{\h}_g(x) \leq V_j(x),\qquad \overline{\h}_g(x) \leq \lim_{j \to \infty}V_j(x) = W(x).
  \end{equation*}
   Repeating the procedure using subsolutions from below we arrive at
   $$\underline{\h}_g(x) \geq W(x)$$
   with\emph{ the same unique solution} $W$ to the Dirichlet  problem with boundary values in Sobolev's sense. We conclude that $\overline{\h}_g = \underline{\h}_g.$
   \end{proof}
\begin{rem} The proof above reveals that the exterior limit
$$
\underset{\substack{x\in \R^n\setminus\Omega \\x \to \xi}}{\lim}\,\h(x)=g(\xi), 
$$   
holds for every $\xi \in \partial \Omega$. Thus the correct boundary value $g(\xi)$ may fail only for the interior limit.
\end{rem}
   
\section{Changing the Right-Hand Side}
In this section we show how one can transform a supersolution of $\Lps u\leq 1$ to a supersolution of $\Lps u\leq 0$ by pulling it down far away. 

\begin{lemma}\label{simple} Suppose $a\geq -2$ and $M\geq \max(3,a)$. Then
$$
|a+M|^{p-2}(a+M)-|a|^{p-2}a\geq c_pM^{p-1}.
$$ 
\end{lemma}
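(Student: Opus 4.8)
The plan is to study the scalar function $\Phi(t)=|t|^{p-2}t$, which is strictly increasing on $\R$, and to bound $\Phi(a+M)-\Phi(a)$ from below. Since $a\geq -2$ and $M\geq 3$, we have $a+M\geq 1>0$, so that $\Phi(a+M)=(a+M)^{p-1}$; this is the one elementary fact used throughout. I then split according to the sign of $a$.

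If $-2\leq a\leq 0$, then $\Phi(a)=-|a|^{p-1}\leq 0$, and hence
$$\Phi(a+M)-\Phi(a)\;\geq\;(a+M)^{p-1}\;\geq\;(M-2)^{p-1}\;\geq\;3^{1-p}M^{p-1},$$
where the last step uses $M-2\geq M/3$, valid precisely because $M\geq 3$.

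If $0\leq a\leq M$ (recall that $M\geq\max(3,a)\geq a$), I set $b=a/M\in[0,1]$ and factor out $M^{p-1}$:
$$\Phi(a+M)-\Phi(a)=M^{p-1}\bigl((1+b)^{p-1}-b^{p-1}\bigr).$$
It then remains to bound the function $g(b)=(1+b)^{p-1}-b^{p-1}$ below on $[0,1]$. Since $x\mapsto x^{p-1}$ is strictly increasing on $[0,\infty)$, the function $g$ is continuous and strictly positive on the compact interval $[0,1]$, hence $g\geq c_p'$ there for some $c_p'>0$; more explicitly, $g'(b)=(p-1)\bigl((1+b)^{p-2}-b^{p-2}\bigr)$ has a constant sign on $(0,1]$, so $g$ is monotone on $[0,1]$ and one may take $c_p'=\min\{g(0),g(1)\}=\min\{1,\,2^{p-1}-1\}$.

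Combining the two cases, the lemma holds with $c_p=\min\{3^{1-p},\,1,\,2^{p-1}-1\}>0$. This is a routine elementary inequality and presents no genuine obstacle; the only point that needs a little care, if one insists on an explicit constant rather than invoking compactness, is that the sign of $g'$ (hence the monotonicity of $g$) reverses at $p=2$, so the ranges $1<p\leq 2$ and $p\geq 2$ should be inspected separately.
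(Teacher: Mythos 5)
Your proof is correct, and it takes a genuinely different route from the paper. The paper's argument is a one-line application of the fundamental theorem of calculus: writing $\Phi(t)=|t|^{p-2}t$ and noting $\Phi'(t)=(p-1)|t|^{p-2}$, one has
$$\Phi(a+M)-\Phi(a)=(p-1)\int_0^M|a+s|^{p-2}\,ds,$$
after which the desired lower bound $c_pM^{p-1}$ is left implicit; it follows, for instance, by substituting $s=Mt$ to pull out $M^{p-1}$ and observing that $\int_0^1|a/M+t|^{p-2}\,dt$ stays bounded away from zero as $a/M$ ranges over the compact interval $[-2/3,1]$. You instead bypass the integral representation entirely and argue by a direct case split on the sign of $a$, combined with a scaling $b=a/M$ and an elementary monotonicity analysis of $g(b)=(1+b)^{p-1}-b^{p-1}$ on $[0,1]$. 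The trade-off is standard: the paper's route is shorter and uniform in $p$ but does not yield a numerical constant without further work, while yours is longer but produces an explicit $c_p=\min\{3^{1-p},\,1,\,2^{p-1}-1\}$, and you correctly flag that the monotonicity of $g$ reverses at $p=2$, which is the only place where care is needed. Both proofs use only the hypotheses $a\geq-2$, $M\geq 3$, $a\leq M$, so your argument is a sound alternative.
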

\begin{proof} $\qquad
|a+M|^{p-2}(a+M)-|a|^{p-2}a =(p-1)\int_0^M |a+s|^{p-2} ds. 
$  \end{proof}

\begin{lemma}\label{nzrhs} Suppose $u$ is a local weak supersolution of $\Lps u\leq 1$ in $B_1.$ If $u\geq -1$ in $B_1$ and $|u|\leq 1$ in $\R^n\setminus B_1$ then the function $$\tilde u = u(x)-M\eta(x)$$ is a supersolution of $\Lps u\leq 0$ in $B_1$, provided that the constant $M$ is large enough. Here $\eta$ is a smooth function satisfying
$$
\eta=
\begin{cases}
0\quad \text{in}\quad & B_2\\
1\quad \text{in} \quad & \R^n\setminus B_4
\end{cases}
$$
and $0\leq \eta \leq 1.$
 \end{lemma}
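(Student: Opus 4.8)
The plan is to verify the supersolution inequality directly from Definition~\ref{varsol}. Fix a nonnegative test function $\phi\in W^{s,p}_0(B_1)$. We must show that the bilinear form applied to $\tilde u = u - M\eta$ against $\phi$ is at least $\int_{B_1} 0\cdot\phi\,dx = 0$; since $u$ is a supersolution of $\Lps u \le 1$, it already satisfies the same form applied to $u$ being at least $\int_{B_1} 1\cdot\phi\,dx$, so it suffices to show that passing from $u$ to $u-M\eta$ decreases the form by \emph{at most} $\int_{B_1}\phi\,dx$, i.e. that
$$
\int_{\R^n}\!\!\int_{\R^n}\frac{J_p(u(x)-u(y))-J_p(\tilde u(x)-\tilde u(y))}{|x-y|^{n+sp}}\,(\phi(x)-\phi(y))\,dx\,dy \;\le\; \int_{B_1}\phi\,dx,
$$
where $J_p(t)=|t|^{p-2}t$. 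Since $\phi$ is supported in $B_1$ and $\eta\equiv 0$ on $B_2$, the only pairs $(x,y)$ that contribute have $\phi(x)-\phi(y)\neq 0$, forcing at least one of $x,y$ in $B_1$; and the integrand vanishes unless $\eta(x)\neq\eta(y)$, forcing at least one of $x,y$ outside $B_2$. So the effective region is $x\in B_1$, $y\in\R^n\setminus B_2$ (and the symmetric one), where $\phi(y)=0$, $\eta(x)=0$, $\tilde u(x)-\tilde u(y) = u(x)-u(y)+M\eta(y)$, and the contribution becomes
$$
2\int_{B_1}\!\!\int_{\R^n\setminus B_2}\frac{J_p\big(u(x)-u(y)\big)-J_p\big(u(x)-u(y)+M\eta(y)\big)}{|x-y|^{n+sp}}\,\phi(x)\,dx\,dy.
$$

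The key point is that this quantity is in fact \emph{negative} once $M$ is large, so the inequality holds with room to spare (we do not even need the $\int_{B_1}\phi$ slack). Indeed, on this region $|u(x)-u(y)|\le |u(x)|+|u(y)|$ is bounded — $u\ge -1$ on $B_1$ and $u$ is bounded on $B_1$ by local weak supersolutions being, say, in $L^\infty_{loc}$; more to the point, we only need a lower bound $u(x)-u(y)\ge -C_0$ on this region, which follows from $u\ge -1$ in $B_1$ and $|u|\le 1$ outside $B_1$ giving $u(x)-u(y)\ge -1-1 = -2$. Thus with $a := u(x)-u(y)\ge -2$ and $M\ge\max(3,a)$ — the latter again by boundedness of $u$ — Lemma~\ref{simple} applied with $a+M\eta(y)$ in place of $a+M$ (monotonicity of $t\mapsto J_p(a+t)$ lets us replace $M$ by $M\eta(y)\le M$ after first checking the claim on $\{\eta(y)=1\}$, i.e. $\R^n\setminus B_4$, and using monotonicity on $B_4\setminus B_2$) yields
$$
J_p\big(u(x)-u(y)+M\eta(y)\big)-J_p\big(u(x)-u(y)\big)\;\ge\; 0 \quad\text{on } \R^n\setminus B_2,
$$
hence the displayed double integral is $\le 0 \le \int_{B_1}\phi$, which is exactly what we need. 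One small adjustment: on $B_4\setminus B_2$ the quantity $M\eta(y)$ may be small, so there the difference $J_p(a+M\eta(y))-J_p(a)$ need not be large, but it is still $\ge 0$ by monotonicity of $J_p$ since $M\eta(y)\ge 0$; only on $\R^n\setminus B_4$ do we invoke Lemma~\ref{simple} for the strict gain, and that strict gain is not even required here.

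The main obstacle is bookkeeping rather than analysis: one must carefully track which pairs $(x,y)$ contribute after inserting $\phi$ and $\eta$, confirm that the "cut-off" annulus $B_4\setminus B_2$ does no harm (monotonicity of $J_p$ suffices there since $\eta\ge 0$), and justify that $u$ is bounded above on $B_1$ so that the hypothesis $M\ge\max(3,a)$ of Lemma~\ref{simple} can be met uniformly — this last uses that a bounded local weak supersolution is locally bounded, or simply that we may first truncate $u$ from above, which does not destroy the supersolution property in the region of interest. Everything else is a direct substitution into Definition~\ref{varsol} together with Lemma~\ref{simple}.
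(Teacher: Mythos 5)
Your decomposition of the integral into the region $x\in B_1$, $y\in\R^n\setminus B_2$, the use of monotonicity of $J_p$ on $B_4\setminus B_2$, and the identification of Lemma~\ref{simple} as applying on $\R^n\setminus B_4$ all match the paper's argument. However, there is a sign error at the very start that makes the proof incomplete: you assert that ``supersolution of $\Lps u\le 1$'' gives the bilinear form of $u$ tested against $\phi\ge 0$ being at least $+\int_{B_1}\phi\,dx$. With the paper's conventions (Definition~\ref{varsol} with $\Lps u=-f$, so a supersolution has bilinear form $\ge\int f\phi$), ``$\Lps u\le 1$'' corresponds to $f=-1$, hence the correct bound is bilinear form of $u$ $\ge -\int_{B_1}\phi\,dx$. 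This is also forced by the meaning of the lemma: one is \emph{upgrading} from the weaker bound $\Lps u\le 1$ to the stronger $\Lps \tilde u\le 0$, so the hypothesis must be the weaker one.

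Once the sign is corrected, your claim that ``the strict gain is not even required here'' fails. Showing only that the bilinear form of $\tilde u$ is $\ge$ that of $u$ yields bilinear$(\tilde u)\ge -\int\phi$, which is not $\ge 0$. This is exactly why the paper invokes Lemma~\ref{simple} on $\R^n\setminus B_4$: there $\eta\equiv 1$ and $u(x)-u(y)\in[-2,\,\text{bounded}]$, so one obtains a \emph{quantitative} increment $c_p M^{p-1}$ in the integrand, which after integrating the kernel $|x-y|^{-n-sp}$ over $y\in\R^n\setminus B_4$ and against $\phi(x)\,dx$ yields a term $C(s,p,n)M^{p-1}\int_{B_1}\phi$. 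Choosing $M$ with $C M^{p-1}\ge 1$ is precisely what absorbs the $-\int\phi$ and closes the argument. You need to restore that gain; the monotonicity-only estimate on $B_4\setminus B_2$ can stay as is (and the boundedness of $u$ that you discuss to ensure $M\ge\max(3,\,u(x)-u(y))$ is also what the paper uses, via $M\ge\max(3,2\|u\|_{L^\infty})$).
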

 \begin{proof}   Let $\phi$ be a non-negative test function in $C_0^\infty(B_1)$.  We claim that
 \begin{align*}
   & \int_{\R^n}\!\!\int_{\R^n}\frac{|\tilde u(x)-\tilde u(y)|^{p-2} (\tilde u(x)-\tilde u(y))}{|x-y|^{n+sp}}(\phi(x)-\phi(y))\, dx dy\\
   &\geq
   \int_{\R^n}\!\!\int_{\R^n}\frac{|u(x)-u(y)|^{p-2}(u(x)-u(y))}{|x-y|^{n+sp}}(\phi(x)-\phi(y))\, dx dy\\
     & + C(s,p,n)M^{p-1}\int_{\R^n}\phi(x)\,dx,
 \end{align*}
 from which the desired result can be read off. To this end,
 we now compare the corresponding integrals.  First of all, we can split the integrals as
  $$
  \int_{\R^n}\int_{\R^n}= \int_{B_2}\int_{B_2}+2\int_{\R^n\setminus B_2}\int_{B_2}=\int_{B_2}\int_{B_2}+2\int_{\R^n\setminus B_2}\int_{B_1}
 $$
 since $\phi$ has compact support in $B_1$. The integrals over $B_2 \times B_2  $ remain the same.  Notice that $(\phi(x)-\phi(y))\,dxdy = \phi(x)\,dxdy$ when $y \not \in B_2$. We note that 
 \begin{align*} 
& \int_{\R^n\setminus B_2}dy\!\int_{B_1}\frac{|\tilde u(x)-\tilde u(y)|^{p-2}(\tilde u(x)-\tilde u(y))}{|x-y|^{n+sp}}\phi(x)\, dx \geq \\
 &\int_{B_4\setminus B_2}dy\!\int_{B_1}\frac{| u(x)- u(y)|^{p-2}( u(x)- u(y))}{|x-y|^{n+sp}}\phi(x)\, dx \\
 &+\int_{\R^n\setminus B_4}dy\!\int_{B_1}\frac{|\tilde u(x)-\tilde u(y)|^{p-2}(\tilde u(x)-\tilde u(y))}{|x-y|^{n+sp}}\phi(x)\, dx. 
 \end{align*}
 The inequality above was valid pointwise, since $\tilde u(x) - \tilde u(y) \geq u(x)-u(y)$ when $x \in B_1,\, y \in B_4 \setminus B_2.$
 The last integral can be estimated as
\begin{align*}
 &\int_{\R^n\setminus B_4}dy\!\int_{B_1}\frac{|\tilde u(x)-\tilde u(y)|^{p-2}(\tilde u(x)-\tilde u(y))}{|x-y|^{n+sp}}\phi(x)\, dx \\
 &= \int_{\R^n\setminus B_4}dy\!\int_{B_1}\frac{|u(x)-u(y)+M|^{p-2}( u(x)- u(y)+M)}{|x-y|^{n+sp}}\phi(x)\, dx \\
 &\geq \int_{\R^n\setminus B_4}dy\!\int_{B_1}\frac{|u(x)-u(y)|^{p-2}( u(x)- u(y))}{|x-y|^{n+sp}}\phi(x)\, dx \\&+\int_{\R^n\setminus B_4}dy\!\int_{B_1}\frac{c_pM^{p-1}}{|x-y|^{n+sp}}\phi(x) \,dx \\
 &\geq \int_{\R^n\setminus B_4}dy\!\int_{B_1}\frac{| u(x)- u(y)|^{p-2}( u(x)- u(y))}{|x-y|^{n+sp}}\phi(x)  dx\\&+C(s,p,n)M^{p-1}\int_{B_1}\phi(x)\, dx, 
\end{align*}
where we used Lemma \ref{simple} and the fact that $u(x)-u(y)\geq -2,$ taking $M\geq \max(3,2\|u\|_{L^\infty(\R^n)})$. The claim follows.
\end{proof}

\paragraph{Regular Points}
As a corollary of the resolutivity theorem, we obtain Theorem \ref{ind}, according to which the right-hand side $f$ of the equation $\Lps u = f$ has no influence of the regularity of a boundary point. This is an immediate consequence of the proposition below. We keep $s,p,$ and $\Omega$ fixed, but change the right-hand side.
\begin{prop}\label{propind} Suppose $|f|\leq 1 $. A boundary point is regular for the equation with the right-hand side   $-f$ if and only if it is regular with the right-hand side is $-1$.
\end{prop}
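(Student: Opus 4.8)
The plan is to reduce the statement about regular boundary points for $\Lps u = -f$ to the barrier characterization in Theorem \ref{barrierthm}, and then to transfer a barrier for one right-hand side to the other by a local modification of the solution that changes the right-hand side without destroying the supersolution or barrier properties. The natural setup is to work in a small ball around $\xi_0$: by translating and rescaling we may assume $\xi_0 = 0$ and $\Omega \subset B_1$, with $\|f\|_{L^\infty(\Omega)} \le 1$. By Theorem \ref{barrierthm}, $\xi_0$ is regular for $\Lps u = -f$ if and only if there is a barrier $\gamma$ at $\xi_0$: a continuous function on $\R^n$, a weak supersolution of $\Lps \gamma = -f$ in $\Omega$, strictly positive away from $\xi_0$, and vanishing at $\xi_0$. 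So it suffices to show that the existence of a barrier for the right-hand side $-f$ (with $|f| \le 1$) is equivalent to the existence of a barrier for the right-hand side $-1$.

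First I would handle the easier implication. Suppose $\xi_0$ is regular for $\Lps u = -1$, hence there is a barrier $\gamma_1$ with $\Lps \gamma_1 = -1$ in $\Omega$ in the weak supersolution sense. I claim (a suitable multiple of) $\gamma_1$ is a supersolution for $\Lps u = -f$ as well, using $|f| \le 1$: scaling $\gamma_1$ by a constant $c \ge 1$ multiplies the left-hand side by $c^{p-1}$, so $\Lps(c\gamma_1) = -c^{p-1} \le -1 \le -f$ in $\Omega$, and $c\gamma_1$ is still continuous, still positive away from $\xi_0$, and still vanishes at $\xi_0$; thus $c\gamma_1$ is a barrier for $-f$. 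Conversely, if $\xi_0$ is regular for $\Lps u = -f$, it is in particular regular for the constant right-hand side obtained by noting $|f|\le 1$... but one cannot simply scale in this direction, since a barrier for $-f$ need not be a supersolution for $-1$. This is where the main work lies, and where I would invoke the machinery of the previous section.

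The key tool is Lemma \ref{nzrhs} and its proof technique: pulling a supersolution down far away from $\Omega$ converts $\Lps u \le 1$ into $\Lps u \le 0$, at the cost only of modifying the function outside a fixed ball. More precisely, I would argue as follows. Since regularity does not depend on $g$ (it is quantified over all continuous $g$), and by Theorem \ref{barrierthm} it is detected by barriers, I would instead reason directly with Perron solutions $\h_g^{(f)}$ and $\h_g^{(1)}$ for the two equations and a fixed continuous $g$. Given regularity for $-f$, one builds, for the equation $\Lps u = -1$, an explicit supersolution of the Lindqvist--type from the proof of Theorem \ref{barrierthm}, namely a multiple of $1 - (1-|x|)_+^{s/2}$, which by Corollary \ref{half-infimum} satisfies $\Lps(1 - (1-|x|)_+^{s/2}) \le -C$ in $B_1$; adding this to a barrier transferred from the $-f$ problem and using the comparison principle in a neighborhood of $\xi_0$ gives the correct interior limit for $-1$. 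Symmetrically one gets the lower bound. I expect the main obstacle to be the bookkeeping needed to make the exterior modification (in the spirit of Lemma \ref{nzrhs}, with its cutoff $\eta$ supported outside $B_2$ and the elementary inequality of Lemma \ref{simple}) interact cleanly with the barrier's positivity and boundary behavior at $\xi_0$: one must check that pulling the function down far from $\Omega$ does not spoil $\gamma > 0$ near $\xi_0$ nor the limit $\gamma(x) \to 0$, and that the resulting function is genuinely a weak supersolution of the changed equation in all of $\Omega$, not merely away from the support of the cutoff. Once that is verified, Theorem \ref{barrierthm} closes the argument in both directions, and Theorem \ref{ind} follows by specializing.
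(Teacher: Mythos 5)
Your proposal diverges from the paper's proof in two ways, and the second one is a genuine gap. For the easy direction (regular for $-1$ implies regular for $-f$) your barrier argument works, but it is more elaborate than needed: a barrier $\gamma_1$ for $-1$ already satisfies $\Lps\gamma_1\leq -1\leq -f$ (since $f\leq 1$), so it is directly a barrier for $-f$; no scaling by $c$ is required. The paper instead sandwiches the $-f$ Perron solution between the $+1$ and $-1$ solutions, $u^-\leq u\leq u^+$, and passes to the limit — a direct comparison that does not invoke Theorem~\ref{barrierthm} at all.

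The hard direction is where your plan breaks down. First, a sign error: by Corollary~\ref{half-infimum}, $\Lps(1-|x|)_+^{s/2}\leq -C$, and since adding a constant does not change $\Lps$ and $\Lps$ is odd, $\Lps\bigl(1-(1-|x|)_+^{s/2}\bigr)\geq C>0$; this function is a strict \emph{sub}solution, not a supersolution. Second, and more fundamentally, your proposal to ``add'' a barrier for $-f$ to this auxiliary function cannot produce a supersolution for $-1$: $\Lps$ is nonlinear in $u$ for $p\neq 2$, so the sum of a supersolution and another function is not controlled. You do correctly sense that the resolution lies in a far-field modification in the spirit of Lemma~\ref{nzrhs}, but the mechanism the paper uses is different from anything you describe. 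It compares the solution $v$ of $\Lps v=-1$ (with data $g$) against the solution $u$ of $\Lps u=-f$, obtaining $v\geq u$ and hence the lower bound $\liminf_{x\to\xi_0}v(x)\geq g(\xi_0)$ directly from regularity for $-f$. For the upper bound, one \emph{lifts} $v$ far away by setting $\tilde v=v+M\eta$ with $\eta\geq 0$ supported at positive distance from $\Omega$; for $M$ large this forces $\Lps\tilde v\geq 1\geq -f$ in $\Omega$, i.e., $\tilde v$ becomes a \emph{sub}solution of the $-f$ equation. Comparing $\tilde v$ with the solution $w$ of $\Lps w=-f$ with the modified exterior data $\tilde g=g+M\eta$ gives $\tilde v\leq w$, and since $\tilde g(\xi_0)=g(\xi_0)$ (the modification is far from $\partial\Omega$) and $\xi_0$ is regular for $-f$, one gets $\limsup_{x\to\xi_0}\tilde v(x)\leq g(\xi_0)$; because $\tilde v=v$ near $\partial\Omega$ this transfers to $v$. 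It is precisely this observation — that lifting far away changes the right-hand side one can compare with, while leaving the boundary behaviour near $\xi_0$ untouched — that is the missing idea in your sketch.
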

\begin{proof} Suppose first that $\xi_0$ is regular with respect to $-1$. Denote by $u^\pm$ the (unique Perron) solutions of 
$$
\begin{cases}
\Lps u^\pm =-\pm 1 \quad\text{in } \Omega,\\
u^\pm = g \quad\text{in } \R^n\setminus \Omega,
\end{cases}
$$
and by $u$ the solution of
$$
\begin{cases}
\Lps u =-f \quad\text{in } \Omega,\\
u = g \quad \text{in } \R^n\setminus \Omega.
\end{cases}
$$
Then $u^-\leq u\leq u^+.$  Since $\xi_0$ is regular with respect to $-1$ it is also regular with respect to $+1$. Hence, 
$$
g(\xi_0) =  \lim_{x\to \xi_0} u^-(x)\leq \lim_{x\to \xi_0} u(x)\leq \lim_{x\to \xi_0} u^+(x)=g(\xi_0), 
$$
so that $\xi_0$ is regular with respect to $-f$.

Now suppose $\xi_0$ is regular with respect to $-f$.  Let $u$ be the solution of
$$
\begin{cases}
\Lps u =-f  \quad\text{in } \Omega,\\
u = g \quad \text{in } \R^n\setminus \Omega.
\end{cases}
$$
Denote by $v$ the solution of 
$$
\begin{cases}
\Lps v =-1  \quad\text{in } \Omega,\\
v = g \quad \text{in } \R^n\setminus\Omega.
\end{cases}
$$
Then $v\geq u$ so that 
$$
\liminf_{x\to \xi_0} v(x)\geq \liminf_{x\to \xi_0} u(x)=g(\xi_0),
$$
since $\xi_0$ is regular with respect to $-f$. Now consider $\tilde v = v+M\eta$, where $\eta\geq 0$ is a smooth function supported at a positive distance $\Omega$. Then, as in Lemma \ref{nzrhs}, for $M$ large enough $\Lps \tilde v \geq 1$ and $\tilde v = \tilde g=g+M\eta$ in $\R^n\setminus \Omega$. Let $w$ be the solution of 
$$
\begin{cases}
\Lps w =-f  \quad\text{in } \Omega,\\
w = \tilde g \quad \text{in } \R^n\setminus \Omega.
\end{cases}
$$
Then $w\geq \tilde v$ so that
$$
\limsup_{x\to \xi_0}\tilde  v(x)\leq \limsup_{x\to \xi_0} w(x) = g(\xi_0), 
$$
again since $\xi_0$ is regular with respect to $-f$. Since $\tilde v = v$ near $\dd\Omega$ we conclude
$$
\lim_{x\to \xi_0} v(x)= g(\xi_0).
$$
Therefore, $\xi_0$ is regular with respect to $-1$.
\end{proof}

\section{Explicit ''Barriers''}

In the non-linear case there are few explicit examples of $sp$-harmonic functions in the literature. In Lemma 3.7 in \cite{BPS16} we find that
$$\Lps(\langle a,x\rangle) = 0,\qquad s > 1-\frac{1}{p}$$
in $\R^n$. (The computations diverge without the restriction on $s$.) 
See also \cite{Dyd12} for explicit computations when $p=2$.
\paragraph{Concave Functions.} 
Suppose $v$ is a concave function. From
$$
v(x)-v(y)\geq \nabla v(x)\cdot (x-y).
$$
we obtain 
\begin{align*}
-\Lps v =2\int_{\R^n} \frac{|v(x)-v(y)|^{p-2}(v(x)-v(y))}{|x-y|^{n+sp}}\, dy \\\geq 2\int_{\R^n} \frac{|\nabla v(x)\cdot(x-y)|^{p-2}(\nabla v(x)\cdot(x-y))}{|x-y|^{n+sp}}\, dy = 0, 
\end{align*}
since the integrand is an odd function with respect to the variable $x-y$, given that the integral at infinity converges,\footnote{A careful arrangement of the calculation shows that
  $$\underset{\e\leq |x-y|\leq L}{\int\!\!\int} \frac{|v(x)-v(y)|^{p-2}(v(x)-v(y))(\phi(y)-\phi(x))}{|x-y|^{n+sp}}\,dx\,dy \,\geq\,0$$
  for every non-negative $\phi \in C^{\infty}_0(\R^n).$} of course. Hence \emph{the concave function is a supersolution}.

\paragraph{Infimal Convolution.}
In passing, we mention that the so-called \emph{infimal convolution}
$$v_{\e}(x) = \inf\Bigl\{v(y) + \frac{|x-y|^2}{2\e}\Bigr\}$$
preserves the inequality $\Lps v \leq -1$. This holds under fairly general assumptions.

\subsection{The Positive Part $(x_1^+)^s$}

The function
$$u(x_1,x_2,\dots,x_n)   = (\mathbf{n}\cdot x)_+^s$$
is a solution to the equation $\Lps u = 0$ in the half-space 
$$\mathbf{n}\cdot x = n_1x_1+\dots+n_nx_n > 0.$$
See Lemma 3.1 in \cite{IMS15}. We shall also need a strict supersolution of this type. We begin with the one-dimensional case.

\begin{lemma} In one dimension
  $$\Lps (x_+)^{\beta} = -C(\beta,s,p)\,x^{\beta(p-1)-sp}\quad\text{when}\quad x >0,$$
  where the constant $C(\beta,s,p) > 0$ for $0 < \beta < s.$ In the case $\beta = s$ we have
  $$\Lps (x_+)^s = 0 \quad\text{when}\quad x >0.$$
\end{lemma}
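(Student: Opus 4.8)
The plan is: reduce to the value at $x=1$ by homogeneity, dispose of the borderline exponent $\beta=s$ by citing \cite{IMS15}, and extract the sign of the constant by subtracting the two singular integrals and then folding the tail $(1,\infty)$ onto $(0,1)$. For Step 1, for $x>0$ the substitution $y=xz$ in the defining principal value gives $\Lps (x_+)^\beta(x)=x^{\beta(p-1)-sp}\,\Lps (x_+)^\beta(1)$, which is the asserted form with $C(\beta,s,p):=-\Lps (x_+)^\beta(1)$; along the way one checks convergence, since the part over $y\le 0$ is an honest convergent integral (no singularity, as $x>0$) and near $y=x$ the leading part of the integrand is odd while the remainder is $O(|y-x|^{p-1-sp})$, integrable because $s<1$. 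The identity $\Lps (x_+)^s(x)=0$ for $x>0$ is Lemma 3.1 of \cite{IMS15} in one dimension (with $\mathbf n=1$), so $C(s,s,p)=0$; this settles the borderline case and will be used as an input for $0<\beta<s$.

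For the reduction when $0<\beta<s$, write $\Phi(t)=|t|^{p-2}t$ (increasing) and use $\Lps (x_+)^s(1)=0$ to obtain
$$\Lps (x_+)^\beta(1)=2\,\mathrm{PV}\!\int_{\R}\frac{\Phi\big((y_+)^\beta-1\big)-\Phi\big((y_+)^s-1\big)}{|1-y|^{1+sp}}\,dy,$$
whose integrand vanishes for $y\le 0$. Applying $y\mapsto 1/y$ on $(1,\infty)$ folds the tail onto $(0,1)$ and merges the two singularities at $y=1$; writing $\gamma:=sp-1-\beta(p-1)$, a direct computation turns this into the single, now absolutely convergent, integral
$$\Lps (x_+)^\beta(1)=2\int_0^1\frac{(1-z^s)^{p-1}(1-z^{s-1})-(1-z^\beta)^{p-1}(1-z^\gamma)}{(1-z)^{1+sp}}\,dz.$$
Absolute convergence has to be checked: near $z=1$ the numerator is $O\big((1-z)^p\big)$ (both products vanish to order $p$), so the integrand is $O\big((1-z)^{p-1-sp}\big)$, integrable because $s<1$; near $z=0$ the numerator is $\sim -z^{s-1}$ since $\gamma>s-1$, integrable because $s>0$.

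It then remains to show the numerator $N(z)$ is strictly negative on $(0,1)$, and for this three elementary facts suffice: (i) $0<1-z^\beta<1-z^s$, hence $0<(1-z^\beta)^{p-1}<(1-z^s)^{p-1}$, because $z^\beta>z^s$; (ii) $1-z^{s-1}<0$; (iii) $s-1<\gamma$ — indeed $\gamma-(s-1)=(s-\beta)(p-1)>0$ — hence $z^{s-1}>z^\gamma$ and therefore $1-z^{s-1}<1-z^\gamma$. If $1-z^\gamma\le 0$, then $1-z^{s-1}<1-z^\gamma\le 0$, so both products in $N(z)$ are $\le 0$ and the first is strictly the more negative by (i) and (iii); if $1-z^\gamma>0$, then $N(z)$ is a strictly negative number minus a positive one. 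Either way $N(z)<0$, so $\Lps (x_+)^\beta(1)<0$, i.e. $C(\beta,s,p)>0$, and the scaling identity of Step 1 completes the proof; as a consistency check, letting $\beta\uparrow s$ sends $\gamma\to s-1$ and $N(z)\to 0$, recovering $C(s,s,p)=0$.

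The main obstacle is the folding step: performing the change of variables $y\mapsto 1/y$ cleanly at the level of principal values — the two leading singularities at $z=1$ must exactly cancel, which is precisely what forces the check that the remaining integrand is genuinely integrable — and keeping track of the signs through the nonlinearity $\Phi$. Once the quantity is rewritten as the displayed integral over $(0,1)$, the sign analysis is purely elementary.
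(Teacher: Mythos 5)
Your proof is correct, and it takes a genuinely different route from the paper's. The paper's argument is a direct computation: split the integral over $(-\infty,0]$, $[0,x]$, $[x,\infty)$, integrate by parts in each piece, substitute to bring everything to $[0,1]$, and read off the sign from the factor $t^{p(s-\beta)}-1$ that survives (with the $\beta=s$ case handled by checking that the diverging boundary terms cancel). You instead reduce to $x=1$ by scaling, use the known identity $\Lps(x_+)^s=0$ (Lemma 3.1 of \cite{IMS15}) as an input, and express $\Lps(x_+)^\beta(1)$ as a \emph{difference} of the two nonlinearities; the M\"obius fold $y\mapsto 1/y$ then collapses the principal value to an absolutely convergent integral over $(0,1)$ whose sign is read off from two elementary monotonicity facts, with no integration by parts. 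I checked the fold: with $\Phi(t)=|t|^{p-2}t$ and $y=1/z$, one indeed gets $z^{sp-1}\Phi(z^{-\beta}-1)=z^{\gamma}(1-z^\beta)^{p-1}$ and $z^{sp-1}\Phi(z^{-s}-1)=z^{s-1}(1-z^s)^{p-1}$, so your displayed integral is correct, and the case analysis $N(z)<0$ holds because $(1-z^s)^{p-1}>(1-z^\beta)^{p-1}>0$, $1-z^{s-1}<1-z^\gamma$, and $1-z^{s-1}<0$.

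A word on what each approach buys. The paper's is self-contained (it re-derives $\beta=s$) and gives the constant in the more traditional hypergeometric-like form with $(1-t^\beta)^{p-2}t^{\beta-1}$, which is also what it needs later for the radial barrier calculation. Yours leverages the known $s$-homogeneous solution and reduces the sign determination to comparing two products, which is arguably cleaner; the price is that the borderline case is cited rather than proved. One small point worth spelling out in your "main obstacle": after the fold, the two symmetric cutoffs $[0,1-\e]$ and $[1+\e,\infty)$ do not map to exactly matching intervals ($1/(1+\e)>1-\e$), so besides checking that the combined integrand is $O((1-z)^{p-1-sp})$ you should also note that the residual contribution over $[1-\e,\,1/(1+\e)]$, where each separate piece is only $O((1-z)^{p-2-sp})$, is $O(\e^2\cdot\e^{p-2-sp})=O(\e^{p(1-s)})\to 0$. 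This is routine but is the piece of bookkeeping that actually justifies replacing the principal value by the absolutely convergent integral.
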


\begin{proof}

Let $u(y) = (y_+)^{\beta}$. Keep $0<x<\infty.$
We split the integral in the formula 
$$\Lps u(x) = 2\int_{-\infty}^{\infty}\!\frac{|u(y)-u(x)|^{p-2}\bigl(u(y)-u(x)\bigr)}{|y-x|^{1+sp}}\, dy$$
into three parts:

\medskip
\noindent {\bf The part $y\leq 0$: }
$$\int_{-\infty}^{0}\!\frac{|u(y)-u(x)|^{p-2}\bigl(u(y)-u(x)\bigr)}{|y-x|^{1+sp}}\, dy = -\frac{x^{\beta(p-1)-sp}}{sp}$$

\bigskip
\noindent {\bf The part $0\leq y\leq x-0$:}\\ \noindent   Write $x_{\e}= x-\e.$ We have
  \begin{align*}
  &\int_{0}^{x_{\e}}\!\frac{|y^{\beta}-x^{\beta}|^{p-2}\bigl(y^{\beta}-x^{\beta}\bigr)}{|y-x|^{1+sp}}\,dy
\,  =\,  - \int_{0}^{x_{\e}}\!\frac{(x^{\beta}-y^{\beta})^{p-1}}{|x-y|^{1+sp}}\,dy \\&=\,-
    \Bigl[(x^{\beta}-y^{\beta})^{p-1}\frac{(x-y)^{-sp}}{+sp}\Bigr]_{0}^{x_{\e}}\,\, -\,\tfrac{\beta(p-1)}{sp}\int_{0}^{x_{\e}}\!(x-y)^{-sp}y^{\beta -1}(x^{\beta}-y^{\beta})^{p-2}\,dy\\&=
    +\frac{{x_{\e}}^{\beta(p-1)-sp}}{sp} - (x_{\e}^{\beta}-y^{\beta})^{p-1}\frac{(x_{\e}-y)^{-sp}}{sp}\\& - \tfrac{\beta(p-1)}{sp}x^{\beta(p-1)-sp}\int_{0}^{1-\frac{\e}{x}}\!(1-t)^{-sp}t^{\beta -1}(1-t^{\beta})^{p-2}\,dt,
  \end{align*}
  where we have substituted $y =tx,\,dy = xdt.$
  \medskip

\noindent {\bf The part $x+0\leq y$:}\\ \noindent   Write $x^{\e} = x+\e.$
  

 \begin{align*}
&\int_{x^{\e}}^{\infty}\!\frac{|y^{\beta}-x^{\beta}|^{p-2}\bigl(y^{\beta}-x^{\beta}\bigr)}{|y-x|^{1+sp}}\,dy
   =  \int_{x^{\e}}^{\infty}\!\frac{(y^{\beta}-x^{\beta})^{p-1}}{|y-x|^{1+sp}}\,dy \\&=
   \Bigl[(y^{\beta}-x^{\beta})^{p-1}\frac{(y-x)^{-sp}}{-sp}\Bigr]_{x^{\e}}^{\infty}\,\,+\, \tfrac{\beta(p-1)}{sp} \int_{x^{\e}}^{\infty}\!(y-x)^{-sp}y^{\beta -1}(y^{\beta}-x^{\beta})^{p-2}\,dy\\&
   = (y^{\beta}-x^{\e\,\beta})^{p-1}\frac{(y-x^{\e})^{-sp}}{sp} + \tfrac{\beta(p-1)}{sp}x^{\beta(p-1)-sp}\int_{1+\frac{\e}{x}}^{\infty}\!(\tau-1)^{-sp}\tau^{\beta-1}(\tau^{\beta}-1)^{p-2}\,d\tau,
 \end{align*}
 where we have substituted $y =\tau x,\, dy = xd\tau.$

In the last integral we substitute
$$\tau = \frac{1}{t}\quad d\tau = -\frac{dt}{t^2}$$
and obtain
$$\int_{1+\frac{\e}{x}}^{\infty}\!(\tau-1)^{-sp}\tau^{p-1}(\tau^{\beta}-1)^{p-2}\,d\tau\,=\, \int_{0}^{\frac{x}{x+\e}}\!(1-t)^{-sp}t^{\beta-1} (1-t^{\beta})^{p-2}\mathbf{t^{p(s-\beta)}}\,dt.$$

Let us first consider the case $0 < \beta < s$. In that case there is no problem about convergence, and letting $\e \to 0$ some terms cancel so that only the following identity is left:
\begin{align*}
 & \int_{-\infty}^{\infty}\!\frac{|u(y)-u(x)|^{p-2}\bigl(u(y)-u(x)\bigr)}{|y-x|^{1+sp}}\, dy\\& =
  \tfrac{\beta(p-1)}{sp}x^{\beta(p-1)-sp}\int_{0}^1\!(1-t)^{-sp}(1-t^{\beta})^{p-2}t^{\beta-1}\left[t^{p(s-\beta)}-1\right]\,dt
\end{align*}where $x>0.$ It is plain that the integral has the same sign as $\beta-s$, since the factor $[t^{p(s-\beta)}-1]$ determines this. This settles the case $\beta < s.$

If $s=\beta$ most terms cancel immediately, but in this case the integral becomes
$$\int_{1-\frac{\e}{x}}^{1-\frac{\e}{x+\e}}\!(1-t)^{-sp}(1-t^s)^{p-2}t^{s-1}\,dt.$$
It approaches zero as $\e \to 0,$ although the complete integral diverges at the endpoint $t = 1$ if $s >1-\tfrac{1}{p}.$ This proves that $\Lps (x_+)^s = 0$ when $x>0.$
\end{proof}

In order to use $x_+^s$ as a minorant, we modify it near infinity so that it becomes the \emph{bounded} function:
\begin{equation*}
  \ell(x) = \begin{cases}
    0\qquad \,\, x < 0,\\
    x^s\qquad 0<x<L,\\
   L^s \qquad L \leq x  <  \infty.
  \end{cases}
\end{equation*}

\begin{prop}\label{minorant} There is a number $\delta > 0$ such that
  $$-4\,\frac{p-1}{sp}L^{-s}  < \Lps \ell (x) < -\frac{p-1}{sp}L^{-s}\quad \text{
  when}\quad 0<\frac{x}{L} < \delta.$$
\end{prop}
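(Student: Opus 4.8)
The strategy is to compare $\ell(x)$ with the pure power $u(x) = (x_+)^s$, for which we already know $\Lps u(x) = 0$ when $x > 0$ by the preceding lemma. Write $\ell = u - w$, where $w(x) = (x^s - L^s)_+$ is the "tail correction" supported on $x \geq L$. The idea is that for $0 < x < \delta L$ the value $\ell(x) = x^s$ is small, and the difference $\Lps \ell(x) - \Lps u(x) = \Lps \ell(x)$ is governed entirely by the contribution of the region $y \geq L$, where $\ell$ has been flattened. First I would write out the defining singular integral
$$
\Lps \ell(x) = 2\int_{-\infty}^{\infty} \frac{|\ell(y) - \ell(x)|^{p-2}(\ell(y) - \ell(x))}{|y - x|^{1+sp}}\,dy
$$
and split it as $\int_{-\infty}^{0} + \int_{0}^{L} + \int_{L}^{\infty}$. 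For $0 < x < \delta L$ with $\delta$ small, on the first two pieces $\ell = u$ exactly, so those two integrals together differ from the corresponding pieces of $\Lps u(x) = 0$ only by the missing tail $\int_{L}^{\infty}$ evaluated with $u$ instead of $\ell$. Concretely,
$$
\Lps \ell(x) = 2\int_{L}^{\infty} \frac{(L^s - \ell(x))^{p-1} - (y^s - \ell(x))^{p-1}}{|y-x|^{1+sp}}\,dy,
$$
using $\Lps u(x) = 0$ to cancel the $y \le L$ parts; note $L^s - \ell(x) > 0$ and $y^s - \ell(x) > 0$ on this range, and $y^s \geq L^s$, so the integrand is negative, giving $\Lps \ell(x) < 0$ immediately.

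Next I would extract the sharp size. Since $\ell(x) = x^s \to 0$ relative to $L^s$ as $x/L \to 0$, the leading behaviour of the bracket $(L^s - \ell(x))^{p-1} - (y^s - \ell(x))^{p-1}$ is, up to a factor $1 + o(1)$, equal to $(L^s)^{p-1} - (y^s)^{p-1}$ (for the lower bound one uses monotonicity and $\ell(x) \geq 0$; for the upper bound one uses that $\ell(x) \le \delta^s L^s$ is a controlled fraction). Likewise, for $x < \delta L$ and $y > L$, $|y - x|$ is comparable to $y$ within a factor close to $1$. Thus
$$
\Lps \ell(x) \asymp 2\int_{L}^{\infty} \frac{L^{s(p-1)} - y^{s(p-1)}}{y^{1+sp}}\,dy = -2\,L^{-s}\int_{1}^{\infty} \frac{\tau^{s(p-1)} - 1}{\tau^{1+sp}}\,d\tau \cdot (\text{after } y = L\tau),
$$
and the remaining dimensionless integral evaluates (splitting $\tau^{s(p-1)-1-sp} = \tau^{-s-1}$ and $\tau^{-1-sp}$) to $\tfrac{1}{s} - \tfrac{1}{sp} = \tfrac{p-1}{sp}$. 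So the model value of $\Lps \ell(x)$ is $-\tfrac{2(p-1)}{sp} L^{-s}$, comfortably inside the asserted window $\left(-4\tfrac{p-1}{sp}L^{-s},\, -\tfrac{p-1}{sp}L^{-s}\right)$, and one chooses $\delta$ small enough that all the $1 + o(1)$ distortions from replacing $\ell(x)$ by $0$ and $|y-x|$ by $y$ keep the true value within that factor-of-$4$ band above and factor-of-$1$ band below.

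The main obstacle is making the comparison with $u = (x_+)^s$ rigorous near $y = x$ and at $y \to \infty$: the integral defining $\Lps u$ is only a principal value and its $y \le L$ part does not converge absolutely at $y = x$, so one cannot literally "subtract" pointwise. The clean fix is to never evaluate $\Lps u$ by itself on $[0,L]$: instead observe that $\ell$ and $u$ agree on all of $(-\infty, L]$, so the singular (PV) part of the difference is identically zero, and only the honestly convergent tail integral $\int_L^\infty$ survives — there the integrand is bounded and $|y - x|^{-1-sp}$ is integrable. One should state this as: $\Lps \ell(x) - \Lps u(x) = 2\int_L^\infty \big[(\ell(y)-\ell(x))^{p-1}_{\mathrm{signed}} - (u(y)-u(x))^{p-1}_{\mathrm{signed}}\big]|y-x|^{-1-sp}\,dy$ with both terms having the same (negative) sign structure, so no cancellation issue arises, and $\Lps u(x) = 0$ by the lemma. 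After that, the two-sided bound is just the elementary estimate of a single convergent integral, carried out as sketched, with $\delta$ chosen at the end to absorb the error terms.
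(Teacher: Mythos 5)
Your proposal is correct and takes essentially the same route as the paper: both reduce $\Lps\ell(x)$ to the absolutely convergent tail integral over $y>L$ by invoking $\Lps(x_+)^s=0$ from the preceding lemma, compute the leading value $-\tfrac{2(p-1)}{sp}L^{-s}$ after the rescaling $y=L\tau$, and then choose $\delta$ so that the distortions coming from $0<x<\delta L$ stay within the factor-of-two margin on either side of this model value. The only cosmetic difference is that the paper first simplifies the correction by an integration by parts, arriving at $-\tfrac{p-1}{p}\int_L^\infty (y-x)^{-sp}y^{s-1}(y^s-x^s)^{p-2}\,dy$, whereas you estimate the bracket $(L^s-x^s)^{p-1}-(y^s-x^s)^{p-1}$ directly; both yield the same dimensionless constant $\tfrac{p-1}{sp}$ and the same asymptotics.
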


\begin{proof} The only change in the previous calculations  is that the right-hand member
  0 of the equation $\Lps = 0$ should be replaced by twice the negative quantity

  $$-\int_{L}^{\infty}\!\frac{(y^s-x^s)^{p-1}}{(y-x)^{1+sp}}\,dy \,
  +\int_{L}^{\infty}\!\frac{(L^s-x^s)^{p-1}}{(y-x)^{1+sp}}\,dy.
  $$
  Upon a partial integration this correction becomes twice the integral
  \begin{align*}
   &-\tfrac{(p-1)}{p}\int_{L}^{\infty}\!(y-x)^{-sp}y^{s-1}(y^s-x^s)^{p-2}\,dy\\=
    &-\tfrac{(p-1)}{p}\,x^{-s}\int_{\frac{L}{x}}^{\infty}\!(\tau-1)^{-sp}\tau^{s-1}(\tau^s-1)^{p-2}\,d\tau.
  \end{align*}
  When $\frac{x}{L}$ is small enough, the integral is of the magnitude $s^{-1}(\frac{x}{L})^s$ and the correction is of the magnitude $2\tfrac{p-1}{sp}L^{-s}.$ The desired estimate follows.
\end{proof}

 \paragraph{Dead Variables.} If a function of $n$ variables depends only on one of them, say
  $$u(x_1,x_2,\dots,x_n) = u_1(x_1)$$
  then, at least formally,
  \begin{align}\label{reduction}
   &\int\limits_{-\infty}^{\infty}\!\! \int\limits_{-\infty}^{\infty}\!\!\!\cdots \!\!
    \int\limits_{-\infty}^{\infty}\!\frac{|u(y)-u(x)|^{p-2}\bigl(u(y)-u(x)\bigr)}{|y-x|^{n+sp}}\,dy_1\,dy_2\cdots dy_n\nonumber\\=& N(n,sp)\int\limits_{-\infty}^{\infty}\!\frac{|u_1(y_1)-u(x_1)|^{p-2}\bigl(u(y_1)-u(x_1)\bigr)}{|y-x|^{1+sp}}\,dy_1,
  \end{align}
  where the constant $N(n,sp)$ is explicit. To see this, notice first that we may take $x_2 =0, x_3 =0,\dots,x_n=0$ and further observe that
  $$
 \int\limits_{-\infty}^{\infty}\!\! \int\limits_{-\infty}^{\infty}\!\!\!\cdots \!\!\!
 \int\limits_{-\infty}^{\infty}\!\!\frac{dy_2\,dy_3\,\cdots\, dy_n}{\left((y_1\!-\!x_1)^2+ y_2^2+\dots+y_n^2\right)^{\frac{n+sp}{2}}}\, = N(n,sp)|x_1\!-\!y_1|^{-1-sp}.$$
   Then the above $n$-dimensional integral can be written as 
   \begin{equation*} \int\limits_{-\infty}^{\infty}\!\underbrace{|u(y)\!-\!u(x)|^{p-2}\bigl(u(y)\!-\!u(x)\bigr)}_{\text{Depends only on}\,\, x_1,y_1}\biggl\{\int\limits_{-\infty}^{\infty}\!\!\!\!\cdots\!\!\!\!\int \limits_{-\infty}^{\infty}\!\frac{dy_2\,\cdots\, dy_n}{\left((y_1\!-\!x_1)^2+y_2^2+\dots+y_n^2\right)^{\frac{n+sp}{2}}}\biggr\}dy_1,
     \end{equation*}
   which reduces to Equation \eqref{reduction}. Thus, we have obtained the result in $\R^n:$ 
   
 \begin{lemma}\label{half-space} In the half-space $\mathbf{n}\cdot x >0$ 
  $$\Lps ( \mathbf{n}\cdot x )_+^{\beta} = -C(\beta,s,p,n)\,(\mathbf{n}\cdot x)_+^{\beta(p-1)-sp}$$
  where the constant $C(\beta,s,p,n) > 0$ for $0 < \beta < s.$ In the case $\beta = s$ we have
  $$\Lps (\mathbf{n}\cdot x  )_+^s = 0.
  $$
 \end{lemma}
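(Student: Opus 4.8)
The plan is to derive the $n$-dimensional formula from the one-dimensional Lemma above through the dead-variable reduction \eqref{reduction}. The first step is a symmetry reduction. Both sides of the asserted identity are unchanged under any rotation of $\Rn$ fixing the hyperplane $\{\mathbf{n}\cdot x=0\}$, and $\Lps$ commutes with orthogonal changes of variables because the kernel $|y-x|^{-n-sp}$ is rotation invariant; hence, choosing a rotation $R$ with $Re_1=\mathbf{n}$, it suffices to treat the model function $u(x)=(x_1)_+^{\beta}$, which depends on $x_1$ alone, at an arbitrary point $x$ with $x_1>0$, and then to apply $R$.

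Second, I would apply \eqref{reduction} with $u_1(t)=(t_+)^{\beta}$. Since the constant $2$ enters the $n$-dimensional and the one-dimensional definitions of $\Lps$ in the same way, the reduction gives directly that $\Lps u(x)$ at $x=(x_1,0,\dots,0)$, $x_1>0$, equals $N(n,sp)$ times the one-variable operator $\Lps(x_+)^{\beta}$ of the preceding Lemma evaluated at $x_1$, that is $-\,N(n,sp)\,C(\beta,s,p)\,x_1^{\beta(p-1)-sp}$ when $0<\beta<s$ and $0$ when $\beta=s$. Undoing the rotation then yields the claim with $C(\beta,s,p,n)=N(n,sp)\,C(\beta,s,p)$; positivity for $0<\beta<s$ is immediate, since $N(n,sp)$ is a convergent, strictly positive integral.

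The step demanding care --- and where I expect the real work to be --- is the rigorous justification of \eqref{reduction}, namely the Fubini-type integration over $y_2,\dots,y_n$ carried out consistently with the principal value. For $0<\beta<s$ there is nothing subtle: by the one-dimensional computation, once the two symmetric halves near $y_1=x_1$ are combined the integrand is absolutely integrable, the slice identity
\[
\int_{\R^{n-1}}\bigl((y_1-x_1)^2+|y'|^2\bigr)^{-\frac{n+sp}{2}}\,dy'=N(n,sp)\,|y_1-x_1|^{-1-sp}
\]
holds for each fixed $y_1\neq x_1$, and Fubini's theorem applies. For $\beta=s$ the principal value genuinely matters; here I would truncate the $n$-dimensional integral symmetrically to $\{|y-x|>\e\}$, integrate out $y'$ there, and compare with the one-variable integral truncated to $\{|y_1-x_1|>\e\}$. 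The two truncations disagree only over the thin slab $\{|y_1-x_1|\le\e\}$ near $x$, where the kernel is even in $y_1-x_1$ while $u(y)-u(x)$ is, to leading order, an odd function of $y_1-x_1$ (recall $x_1>0$, so $u$ is smooth there); the odd part integrates to $0$ by symmetry and the even remainder is $O(\e^{p(1-s)})\to0$. Controlling this discrepancy lets one pass to the limit and recover the one-dimensional value $0$, settling the case $\beta=s$.
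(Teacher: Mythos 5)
Your proposal is correct and takes essentially the same route as the paper: the lemma is obtained by combining the one-dimensional computation with the dead-variable reduction \eqref{reduction} (plus the implicit rotation to bring $\mathbf{n}$ to $e_1$). Your extra care in justifying the Fubini/principal-value interchange when $\beta=s$ is a reasonable tightening of what the paper waves off with ``at least formally,'' but it is not a different argument.
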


 Also the function $$\ell(x_1,x_2,...,x_2) = \ell(x_1),$$ extended with dead variables, preserves its properties in Proposition \ref{minorant}.

 \begin{cor}\label{half-infimum} The function 
   $$k(x) = (1-|x|)^{\beta}_{+}$$
   is a supersolution in the ball $|x| <1,$
   if $0<\beta \leq s.$ In fact, 
   $$
   \Lps k\leq -C(\beta,s,p,n)<0, \quad \text{ when $\beta<s$}.
   $$
 \end{cor}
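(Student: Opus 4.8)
The plan is to dominate $k$, at each interior point, by one of the half-space barriers of Lemma \ref{half-space} and then let the monotonicity of $t\mapsto|t|^{p-2}t$ transfer the inequality through the operator. Fix $x_0\in B_1$; if $x_0\neq 0$ set $\xi=x_0/|x_0|$, and if $x_0=0$ let $\xi$ be any unit vector. Put
$$
w(y)=\bigl(1-\xi\cdot y\bigr)_+^{\beta}=(\mathbf{n}\cdot(y-\xi))_+^{\beta}\qquad\text{with }\ \mathbf{n}=-\xi,
$$
a translate of the function treated in Lemma \ref{half-space}. Since $\xi\cdot y\le|y|$ and $t\mapsto t_+^{\beta}$ is nondecreasing, $w\ge k$ on all of $\R^n$, while $w(x_0)=(1-|x_0|)^{\beta}=k(x_0)$; thus $w$ touches $k$ from above at $x_0$. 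Moreover $x_0$ lies in the open half-space $\{\xi\cdot y<1\}$, which contains $B_1$ and on which Lemma \ref{half-space} applies.

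Next I would push this through the operator. Recalling $\Lps v(x)=2\,\mathrm{PV}\!\int_{\R^n}\frac{|v(y)-v(x)|^{p-2}(v(y)-v(x))}{|y-x|^{n+sp}}\,dy$, the relations $k(x_0)=w(x_0)$ and $k\le w$ give $k(y)-k(x_0)\le w(y)-w(x_0)$ for every $y$, and since $t\mapsto|t|^{p-2}t$ is increasing the integrand defining $\Lps k(x_0)$ is pointwise dominated by that defining $\Lps w(x_0)$. Comparing the truncated principal values and letting the truncation vanish gives
$$
\Lps k(x_0)\ \le\ \Lps w(x_0),
$$
with the understanding that the left-hand side may equal $-\infty$ (this can happen only at $x_0=0$ and only helps). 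By translation invariance of $\Lps$ and Lemma \ref{half-space},
$$
\Lps w(x_0)=-\,C(\beta,s,p,n)\,(1-|x_0|)^{\beta(p-1)-sp}\quad(0<\beta<s),\qquad \Lps w(x_0)=0\quad(\beta=s).
$$

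It remains to read off the two claims. For $\beta=s$ we obtain $\Lps k\le 0$ throughout $B_1$, so $k$ is a supersolution there. For $0<\beta<s$ we have $\beta(p-1)-sp<s(p-1)-sp=-s<0$, so the exponent is negative; as $0<1-|x_0|\le 1$ this forces $(1-|x_0|)^{\beta(p-1)-sp}\ge 1$, whence
$$
\Lps k(x_0)\ \le\ -\,C(\beta,s,p,n)\ <\ 0\qquad\text{for all }x_0\in B_1,
$$
with the very constant of Lemma \ref{half-space}. Finally, the pointwise inequality passes to the weak (local) one by testing against a nonnegative $\phi\in C_0^\infty(B_1)$ and arranging the truncated integrals as in the footnote to the paragraph on concave functions; hence $k$ is a genuine weak supersolution in $B_1$. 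I expect the only delicate points to be getting the direction of the comparison right and the brief bookkeeping about convergence of the principal value at the origin — everything else is an immediate consequence of Lemma \ref{half-space}.
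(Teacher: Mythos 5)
Your proof is correct and rests on the same key lemma (Lemma~\ref{half-space}) and the same family of half-space barriers as the paper. The paper's own proof simply observes that $k(x)=(1-|x|)_+^{\beta}$ equals the infimum, over unit vectors $\mathbf{n}$, of the supersolutions $(1-\mathbf{n}\cdot x)_+^{\beta}=\bigl(\mathbf{n}\cdot(\mathbf{n}-x)\bigr)_+^{\beta}$ and then invokes the principle that an infimum of supersolutions is a supersolution. What you do differently is make that infimum principle explicit as a pointwise touching argument: at each interior $x_0$ you select the one barrier $w$ from the family that touches $k$ from above at $x_0$ (taking $\mathbf{n}=-x_0/|x_0|$), and then push the inequality $k(y)-k(x_0)\le w(y)-w(x_0)$ through the nonlocal operator via the monotonicity of $t\mapsto |t|^{p-2}t$, comparing truncated principal values. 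This is precisely the mechanism that makes the infimum principle valid for $\Lps$, so the two arguments are essentially the same, but your version spells out the step the paper leaves implicit and additionally keeps track of the explicit bound $\Lps k\le -C(\beta,s,p,n)(1-|x_0|)^{\beta(p-1)-sp}\le -C$ coming from the negative exponent when $\beta<s$. Both versions share the same final gesture toward passing from the pointwise inequality to the weak formulation (testing against nonnegative $\phi$ and splitting the truncated double integrals as in the footnote on concave functions); the paper is equally brief on this point, so your proof is at the paper's level of rigor.
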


 \begin{proof}  The infimum over all the half-plane supersolutions  $\bigl(\mathbf{n}\cdot(\mathbf{n}-x)\bigl)_+^{\beta}$, where $\mathbf{n}$ is the exterior unit normal, is a supersolution in their common domain. (If $\beta \not = s$, one even gets a strict supersolution.)
 \end{proof}
 
 \subsection{A Radial ''Barrier''}

 The proof of Theorem \ref{thm:barrier} about the fundamental barrier function was based on the following result about the radial function 
 \begin{equation*}
   \omega(r) = \begin{cases}
     (r-r_0)^{\beta},\quad r \geq r_0\\
     0,\qquad\quad 0 \leq r \leq r_0
   \end{cases}
 \end{equation*}
 where $r = \sqrt{x_1^2+\dots+x_n^2}$.

 \begin{lemma}\label{ring} Let $0<\beta < s.$ There is a $\delta = \delta(r_0,s,\beta,p,n)>0$ such that
   $$\Lps \omega(r) \leq -1\quad\text{when}\quad r_0 < r  < r_0+\delta.$$
 \end{lemma}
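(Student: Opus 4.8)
The plan is to blow $\omega$ up at the base point and recognise the half--space supersolution of Lemma~\ref{half-space}. By rotational symmetry it suffices to take $x=(r,0,\dots,0)$ with $r=r_0+t$ and $0<t$ small, and to write $\xi_0=(r_0,0,\dots,0)\in\partial B_{r_0}$ and $e_1=(1,0,\dots,0)$. Substituting $y=\xi_0+tz$ in the principal--value integral defining $\Lps\omega(x)$ (which exists, since $\omega$ is smooth near $x$), using $y-x=t(z-e_1)$, $\omega(\xi_0+tz)=t^{\beta}\varphi_t(z)^{\beta}$ with $\varphi_t(z)=\big(t^{-1}(|\xi_0+tz|-r_0)\big)_+$ and $\omega(x)=t^{\beta}$, together with the homogeneity $|\lambda\sigma|^{p-2}\lambda\sigma=\lambda^{p-1}|\sigma|^{p-2}\sigma$ ($\lambda>0$), one obtains the scaling identity
$$t^{\,sp-\beta(p-1)}\,\Lps\omega(x)\;=\;2\int_{\R^n}\bigl|\varphi_t(z)^{\beta}-1\bigr|^{p-2}\bigl(\varphi_t(z)^{\beta}-1\bigr)\,\frac{dz}{|z-e_1|^{\,n+sp}}.$$
Since $|\xi_0+tz|-r_0=tz_1+O(t^2|z|^2)$ on bounded sets, $\varphi_t(z)\to (z_1)_+$ as $t\to0^+$ for every $z\neq e_1$; this is just the statement that, after rescaling, $\partial B_{r_0}$ flattens to its tangent hyperplane and $\omega$ turns into $(z_1)_+^{\beta}$.

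Granting that one may pass to the limit $t\to0^+$ under the integral, the right--hand side tends to $2\int_{\R^n}\bigl|(z_1)_+^{\beta}-1\bigr|^{p-2}\bigl((z_1)_+^{\beta}-1\bigr)|z-e_1|^{-n-sp}\,dz$, which is exactly $\Lps\bigl((z_1)_+^{\beta}\bigr)$ evaluated at $z=e_1$, hence equals $-C(\beta,s,p,n)$ by Lemma~\ref{half-space} (with $\mathbf n=e_1$), and $C(\beta,s,p,n)>0$ because $0<\beta<s$. Therefore $t^{\,sp-\beta(p-1)}\Lps\omega(x)\to-C(\beta,s,p,n)<0$, and since $\beta(p-1)-sp<0$ we get, for all sufficiently small $t$,
$$\Lps\omega(x)\;\le\;-\tfrac12 C(\beta,s,p,n)\,t^{\,\beta(p-1)-sp}\;\le\;-1,$$
the last inequality once $t<\delta=\delta(r_0,s,\beta,p,n)$; in fact $\Lps\omega(x)\to-\infty$ as $t\to0^+$. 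This proves the lemma.

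The one delicate point is the passage to the limit, because near $z=e_1$ the integrand is only conditionally integrable. Away from $e_1$ it is routine: $|\varphi_t(z)^{\beta}-1|^{p-1}\le C(|z|^{\beta(p-1)}+1)$, and $\beta(p-1)<sp$ makes $(|z|^{\beta(p-1)}+1)|z-e_1|^{-n-sp}$ integrable at infinity, so dominated convergence applies on $\{|z-e_1|\ge\eta_0\}$. On $\{|z-e_1|<\eta_0\}$, writing $w=z-e_1$, one checks that both $\varphi_t(e_1+w)^{\beta}-1$ and $(z_1)_+^{\beta}-1$ equal $\beta w_1+O(|w|^2)$, with the remainder uniform in small $t$; subtracting the odd function $|\beta w_1|^{p-2}\beta w_1$, whose principal value over a ball vanishes, leaves an \emph{absolutely} convergent integral that is small with $\eta_0$, uniformly in $t$. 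The only case needing real care here is $1<p<2$ (which overlaps the demanding range $sp\le1$ flagged in the introduction): there $\sigma\mapsto|\sigma|^{p-2}$ is singular at the origin, so one isolates the thin slab $\{|w_1|\le c|w'|^2\}$ and uses $\bigl||a+h|^{p-2}(a+h)-|a|^{p-2}a\bigr|\le C(|a|+|h|)^{p-2}|h|$ off the slab and $\le C|h|^{p-1}$ on it; the exponents that appear are integrable precisely because $s<1$ forces both $p>sp$ and $2-1/p\ge 1>s$. (An alternative to the blow--up, mimicking the one--dimensional Lemma, is to integrate out the angles: $\Lps\omega(x)=2\int_0^\infty|\omega_1(\rho)-\omega_1(r)|^{p-2}(\omega_1(\rho)-\omega_1(r))\rho^{n-1}K(r,\rho)\,d\rho$ with $\omega_1(\rho)=(\rho-r_0)_+^{\beta}$ and $K(r,\rho)=\int_{S^{n-1}}(\rho^2+r^2-2r\rho\,\theta_1)^{-(n+sp)/2}\,d\sigma(\theta)=\kappa_{n,sp}|r-\rho|^{-1-sp}+O(|r-\rho|^{-sp})$ uniformly, and then repeat the substitutions and partial integrations of that Lemma carrying the weight $\rho^{n-1}$.)
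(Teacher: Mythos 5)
Your proof is correct, and it takes a genuinely different route from the one in the paper. The paper proves the lemma by an explicit computation in spherical coordinates (carried out in detail for $n=3$, with a separate remark for $n=1$): the azimuthal angle is integrated away, the colatitude is integrated by parts, and the remaining one‑dimensional integral in $\rho$ is split at $\rho=r_0$ and $\rho=r$ and partially integrated, after which the substitutions $\rho-r_0=t(r-r_0)$ and $\tau=1/t$ bring everything onto $[0,1]$ and yield the exact representation $\Lps\omega(r)=c\, r (r-r_0)^{\beta(p-1)-sp}\,(\mathrm{I}+\mathrm{II}+\mathrm{III}+\mathrm{IV})$, where $\mathrm{I}<0$ is the main term and $\mathrm{II},\mathrm{III},\mathrm{IV}\to 0$ as $r\to r_0^+$ (with extra care in the cases $sp=1$ and $\mathrm{IV}$'s near‑diagonal singularity). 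Your argument instead rescales the integral by the distance $t=r-r_0$ to the inner sphere: the scaling identity $t^{sp-\beta(p-1)}\Lps\omega(x)=2\,\mathrm{PV}\!\int |\varphi_t(z)^\beta-1|^{p-2}(\varphi_t(z)^\beta-1)\,|z-e_1|^{-n-sp}\,dz$ is exact, and the blow‑up limit $\varphi_t\to (z_1)_+$ identifies the right-hand side, as $t\to 0^+$, with $\Lps\big((z_1)_+^\beta\big)$ at $z=e_1=\mathbf n$, which is $-C(\beta,s,p,n)<0$ by Lemma~\ref{half-space}. This gives the same asymptotic $\Lps\omega\sim -C\,(r-r_0)^{\beta(p-1)-sp}\to-\infty$ but at one stroke in every dimension, it makes the geometric content visible (curvature effects enter as $O(t)$ corrections to the flat half‑space), and it systematically reuses the one‑dimensional computation already done for Lemma~\ref{half-space} instead of redoing an analogous, longer, radial computation. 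The price is that the limit passage in the principal‑value integral must be justified; your treatment is essentially right. Away from $e_1$ the bound $\varphi_t(z)\le |z|$ and $\beta(p-1)<sp$ give a uniform integrable majorant, so dominated convergence applies. Near $e_1$ you subtract the odd part $|\beta w_1|^{p-2}\beta w_1$ (whose principal value over a ball vanishes) and are left with an absolutely convergent remainder: for $p\ge 2$ the standard estimate gives $O(|w|^{p})\,|w|^{-n-sp}$, integrable since $p(1-s)>0$; for $1<p<2$ the slab decomposition is indeed the right device, with the slab contribution controlled by $|w|^{2(p-1)}\,|w|^{-n-sp}$ (integrable since $(2-s)p>1$, your ``$2-1/p>s$'') and the off‑slab contribution by $|w_1|^{p-2}\,O(t|w'|^{2}+w_1^2)\,|w|^{-n-sp}$. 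For the latter it is important to retain the $|w'|^2$ factor coming from the spherical curvature rather than replacing it by the cruder bound $|w_1|$: after passing to polar coordinates and using the integrability of $|\theta_1|^{p-2}$ on $S^{n-1}$ one obtains a bound $\lesssim \eta_0^{p(1-s)}$, uniform in small $t$, whereas the coarser $t\,|w_1|^{p-1}$ bound can fail to be integrable when $p(1-s)\le 1$. I would spell this point out, as it is where your argument could silently go wrong; once it is made explicit the proof is complete. Your parenthetical alternative (integrating out the angular variables to get a one‑dimensional kernel $K(r,\rho)$ in every dimension $n$) is in effect the paper's route pushed to general $n$, and is a viable fallback.
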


 \begin{proof} The calculations are elementary, but lengthy. We write the details in three dimensions. The operator $\Lps$ preserves rotational symmetry. We bring the point $x$ to the position $(0,0,r)$ and for $y$ we use  spherical coordinates
   \begin{equation*}
     \begin{cases}
       y_1=\rho\sin\theta\cos\phi\\
       y_2 = \rho \sin \theta \sin \phi\\
       y_3 = \rho \cos \theta
     \end{cases}
   \end{equation*}
   so that
   $$|y-x|^2 = (\rho \cos \theta -r)^2 + \rho^2\sin^2\theta \cdot 1= \rho^2 + r^2 -2\rho r \cos \theta.$$
  Writing $\Lps \omega(r)$ in spherical coordinates and integrating away the azimuth $\phi$ we obtain the expression 
   $$2\cdot2\pi\int\limits_0^{\infty}\int\limits_0^{2\pi}\frac{|\omega(\rho)-\omega(r)|^{p-2}\bigl(\omega(\rho)-\omega(r)\bigr)}{\left(\rho^2+r^2-2\rho r\cos \theta \right)^{\frac{3+sp}{2}}}\sin \theta\, \rho^2\,d\theta\, d\rho$$
   for $\Lps \omega(r).$ (Recall that $\Lps$ carries the factor $2$.)
   Using
   \begin{equation*}
     \frac{d\,\,}{d\theta}\,
     \frac{\left(\rho^2+r^2-2\rho r \cos \theta \right)^{-\frac{1+sp}{2}}}{(1+sp)\rho r}
     \,=\, \frac{\sin \theta}{\left(\rho^2+r^2-2\rho r \cos \theta \right)^{\frac{3+sp}{2}}}
   \end{equation*} 
     we  integrate with respect to $\theta$:
     \begin{equation*}  \frac{4\pi}{r(1+sp)}\int\limits_0^{\infty}|\omega(\rho)-\omega(r)|^{p-2}\bigl(\omega(\rho)-\omega(r)\bigr)\!\left[\frac{\rho}{|\rho-r|^{1+sp}}-\frac{\rho}{|\rho+r|^{1+sp}}\right] d\rho
         \end{equation*}
     The integrals
     \begin{align*}
       \int\frac{\rho\,d\rho}{(r-\rho)^{1+sp}} &=\frac{1}{1-sp}(r-\rho)^{1-sp} +\frac{r}{sp}(r-\rho)^{-sp},
\quad  \mathbf{\rho < r}\\     
         \int\frac{\rho\,d\rho}{(\rho-r)^{1+sp}} &=\frac{1}{1-sp}(\rho-r)^{1-sp} -\frac{r}{sp}(\rho-r)^{-sp} , \quad    \mathbf{\rho > r}\\     	     \int\frac{\rho\,d\rho}{(\rho+r)^{1+sp}} &=\frac{1}{1-sp}(\rho+r)^{1-sp} +\frac{r}{sp}(\rho+r)^{-sp},\quad \mathbf{\rho \gtrless r}     \end{align*}
     are convenient later.

     In the calculations below we keep $\mathbf{r > r_0}$
     and $0 < \beta < s.$ First we take the cases $\boxed{sp\not = 1.}$  We split the integral in three pieces.

     \bigskip

\noindent {\bf The part $0<\rho<r_0:$}
     \begin{gather*}     \int\limits_o^{r_0}|0-\omega(r)|^{p-2}(0-\omega(r))\!\left[\frac{\rho}{|\rho-r|^{1+sp}}-\frac{\rho}{|\rho+r|^{1+sp}}\right] d\rho\\
=\, -(r-r_0)^{\beta(p-1)}\left[\frac{1}{1-sp}(r-\rho)^{1-sp}\right. +\frac{r}{sp}(r-\rho)^{-sp}\\\qquad \left.
  - \frac{1}{1-sp}(\rho+r)^{1-sp} -\frac{r}{sp}(\rho+r)^{-sp}\right]_{\rho = r_0-0}
     \end{gather*}
     The substitution at $\rho=0$ yields zero. The value at $\rho = r_0-0$ will cancel against the corresponding quantity below.
     \bigskip

\noindent {\bf The part $r_0<\rho<r:$}
     \begin{multline*}
       \begin{aligned}
       \int\limits_{r_0}^{r}\cdots d\rho &= - \int\limits_{r_0}^{r}\left((r-r_0)^{\beta}-(\rho-r_0)^{\beta}\right)^{p-1}\!\left[\frac{\rho}{|\rho-r|^{1+sp}}-\frac{\rho}{|\rho+r|^{1+sp}}\right] d\rho\\
         &= -\left((r-r_0)^{\beta}-(\rho-r_0)^{\beta}\right)^{p-1}\left[\frac{1}{1-sp}(r-\rho)^{1-sp}\qquad\right.\\&\qquad\left. +\frac{r}{sp}(r-\rho)^{-sp}
           - \frac{1}{1-sp}(\rho+r)^{1-sp} -\frac{r}{sp}(\rho+r)^{-sp}
           \right]_{\rho = r_0+0}^{r-0}
         \end{aligned}\\
             -\tfrac{\beta(p-1)}{sp}\,r \int\limits_{r_0}^{r}\left((r-r_0)^{\beta}-(\rho-r_0)^{\beta}\right)^{p-2}(\rho-r_0)^{\beta-1}\left\{(r-\rho)^{-sp}-(r+\rho)^{-sp}\right\}d\rho\\
               -\tfrac{\beta(p-1)}{1-sp}  \int\limits_{r_0}^{r}\left((r-r_0)^{\beta}-(\rho-r_0)^{\beta}\right)^{p-2}(\rho-r_0)^{\beta-1}     \left\{(r-\rho)^{1-sp}-(r+\rho)^{1-sp}\right\}d\rho
     \end{multline*}
     Now we substitute
     $$\rho-r_0 =t(r-r_0),\qquad d\rho = (r-r_0)\,dt$$
     in the two last integrals:
       \begin{align*} -\tfrac{\beta(p-1)}{sp}\,r(r-r_0)^{\beta(p-1)-sp}\!\int\limits_0^1(1-&t^{\beta})^{p-2}t^{\beta-1}\quad \times\\\Bigl\{&(1-t)^{-sp}
         -\Bigl(1+t+\frac{2r_0}{r-r_0}\Bigr)^{-sp}\Bigr\}dt\\
       -\tfrac{\beta(p-1)}{1-sp}(r-r_0)^{\beta(p-1)-sp+1}\!\int\limits_0^1(1-&t^{\beta})^{p-2}  t^{\beta-1}\quad \times\\ \Bigl\{&(1-t)^{1-sp} 
         -\Bigl(1+t+\frac{2r_0}{r-r_0}\Bigr)^{1-sp}\Bigr\}dt
       \end{align*}

       \bigskip

\noindent {\bf The part $r<\rho<\infty:$}
   \begin{multline*}
       \begin{aligned}
       \int\limits_{r+0}^{\infty}\cdots d\rho &= - \int\limits_{r}^{\infty}\left((\rho-r_0)^{\beta}-(r-r_0)^{\beta}\right)^{p-1}\!\left[\frac{\rho}{(\rho-r)^{1+sp}}-\frac{\rho}{(\rho+r)^{1+sp}}\right] d\rho\\
         &= -\left((\rho-r_0)^{\beta}-(r-r_0)^{\beta}\right)^{p-1}\left[\frac{1}{1-sp}(\rho-r)^{1-sp}\qquad\right.\\&\qquad\left. -\frac{r}{sp}(\rho-r)^{-sp}
           - \frac{1}{1-sp}(\rho+r)^{1-sp} -\frac{r}{sp}(\rho+r)^{-sp}
           \right]_{\rho = r+0}^{\infty}
         \end{aligned}\\
             \tfrac{\beta(p-1)}{sp}\,r \int\limits_{r}^{\infty}\left((\rho-r_0)^{\beta}-(r-r_0)^{\beta}\right)^{p-2}(\rho-r_0)^{\beta-1}\left\{(\rho-r)^{-sp}+(r+\rho)^{-sp}\right\}d\rho\\
               -\tfrac{\beta(p-1)}{1-sp}  \int\limits_{r}^{\infty}\left((\rho-r_0)^{\beta}-(r-r_0)^{\beta}\right)^{p-2}(\rho-r_0)^{\beta-1}     \left\{(\rho-r)^{1-sp}-(r+\rho)^{1-sp}\right\}d\rho
   \end{multline*}
   The substitution term at $\rho =\infty$ vanishes and at $\rho =r+0$ it cancels against a similar term. After this, there are only integrals left.
     Now we substitute
     $$\rho-r_0 =\tau(r-r_0),\quad \rho-r = (\tau-1)(r-r_0),\quad d\rho = (r-r_0)\,d\tau$$
     in the two last integrals:
       \begin{align*} \tfrac{\beta(p-1)}{sp}\,r(r-r_0)^{\beta(p-1)-sp}\!\int\limits_1^{\infty}(\tau^{\beta}-1)^{p-2}&\tau^{\beta-1}\,\, \times\\\Bigl\{&(\tau-1)^{-sp}
         +\Bigl(1+\tau+\frac{2r_0}{r-r_0}\Bigr)^{-sp}\Bigr\}d\tau\\
       -\tfrac{\beta(p-1)}{1-sp}(r-r_0)^{\beta(p-1)-sp+1}\!\int\limits_1^{\infty}(\tau^{\beta}-1)^{p-2} & \tau^{\beta-1}\,\, \times\\ \Bigl\{&(\tau-1)^{1-sp} 
         -\Bigl(1+\tau+\frac{2r_0}{r-r_0}\Bigr)^{1-sp}\Bigr\}d\tau
       \end{align*}
       In order to get command over the sign of the main term, we use the substitution
       $$\tau = \frac{1}{t},\qquad d\tau = - \frac{dt}{t^2}$$
       to bring the intergrations over a common interval. The two last integrals become

         \begin{align*} +\tfrac{\beta(p-1)}{sp}\,r(r-r_0)^{\beta(p-1)-sp}\!\int\limits_0^{1}\mathbf{t^{p(s-\beta)}}&(1-t^{\beta})^{p-2}t^{\beta-1}\quad \times\\\Bigl\{&(1-t)^{-sp}
         +\Bigl(1+t+\frac{2r_0t}{r-r_0}\Bigr)^{-sp}\Bigr\}dt\\
       -\tfrac{\beta(p-1)}{1-sp}(r-r_0)^{\beta(p-1)-sp+1}\!\int\limits_0^{1}\mathbf{t^{p(s-\beta)-1}}&(1-t^{\beta})^{p-2}  t^{\beta-1}\quad \times\\ \Bigl\{&(1-t)^{1-sp} 
         -\Bigl(1+t+\frac{2r_0t}{r-r_0}\Bigr)^{1-sp}\Bigr\}dt.
         \end{align*}

         \emph{In toto,} we obtain  by adding the integrals left in the three cases
         \begin{equation}\label{fyra}\displaystyle
           \Lps \omega(r) =\tfrac{4\pi}{1+sp} \tfrac{\beta(p-1)}{sp}\,r(r-r_0)^{\beta(p-1)-sp}\times (\mathrm{I}+\mathrm{I\!I}+\mathrm{I\!I\!I}+\mathrm{I\!V}),\end{equation}
         where  four integrals appear. The sign of the whole expression is determined by the factor $\mathrm{I}+\mathrm{I\!I}+\mathrm{I\!I\!I}+\mathrm{I\!V}$. First,  we have the \textsf{main term:}
         \begin{equation*}\label{romerska}\displaystyle
           \mathrm{I} = \int\limits_{0}^{1}\left(t^{p(s-\beta)}-1\right)(1-t^{\beta})^{p-2}t^{\beta-1}(1-t)^{-sp}\,dt.
         \end{equation*}
         It is of utmost importance that this integral is \emph{strictly negative} when $0 < \beta < s.$ \emph{It will dominate over the other terms} provided that $r-r_0$ is small. The next one is
         \begin{equation*}\displaystyle
           \mathrm{I\!I} = \int\limits_{0}^{1}(1-t^{\beta})^{p-2}t^{\beta-1}\left(1+t^{p(s-\beta)}\right)\underbrace{\Bigl(1+t+\frac{2r_0}{r-r_0}\Bigr)^{-sp}}_{\text{Goes to}\, 0 \,\text{as}\, r\to r_0+0}\, dt. \end{equation*}
         Notice that this integral is uniformly small when $0<r-r_0<\delta =$ a small number. The third term is
         \begin{equation*}
           \mathrm{I\!I\!I} = \tfrac{sp}{1-sp}\frac{(r-r_0)}{r}\int\limits_0^1 (1-t^{\beta})^{p-2}t^{\beta-1}\Bigl\{\Bigl(1+t+\frac{2r_0}{r-r_0}\Bigr)^{1-sp} - (1-t)^{1-sp} \Bigr\}\,dt.
         \end{equation*}
          The factor $(r-r_0)$ in front of the term $\mathrm{I\!I\!I}$  makes it small compared to the main term. Finally,
         \begin{align*}\displaystyle
           \mathrm{I\!V} = \tfrac{sp}{1-sp}\frac{(r-r_0)}{r} \int\limits_{0}^{1}t^{p(s-\beta)-1}&(1-t^{\beta})^{p-2}t^{\beta-1} \,\, \times \\&\Bigl\{ \Bigl(1+t+\frac{2r_0t}{r-r_0}\Bigr)^{1-sp} -(1-t)^{1-sp}  \Bigr\}\, dt. \end{align*}
         This term is problematic, because the integral would not converge at $t = 0$ without the expression in braces. We can write\footnote{The case $sp \leq  1$ is urgent, but the estimation also works for $sp > 1.$}
         \begin{align*}
          &\tfrac{1}{1-sp} \Bigl\{\Bigl(1+t+\frac{2r_0t}{r-r_0}\Bigr)^{1-sp} -(1-t)^{1-sp}\Bigr\} \\
          & =   (1+\xi)^{-sp}\left[2t + \frac{2r_0t}{r-r_0}\right]\\
          & < 2(1-t)^{-sp}\left[t + \frac{r_0t}{r-r_0}\right],\\
           \end{align*}
           where the Mean Value Theorem was used for the function $(1+y)^{1-sp}$ and $-t<\xi < t +\tfrac{2r_0t}{r-r_0}.$ Thus we have the estimate           
$$
 (r-r_0)\Bigl\vert \Bigl(1+t+\frac{2r_0t}{r-r_0}\Bigr)^{1-sp} -(1-t)^{1-sp}  \Bigr\vert 
   \leq  2|1-sp| (1-t)^{-sp} r\mathbf{t},
  $$
   where we have gained one power of $t$. Split the integral in $\mathrm{I}\!\mathrm{V}$ as
   $$(r-r_0)\left(\int_0^{\sigma}+\int_{\sigma}^1\right),$$
   where the above estimate shows that we now can make the quantity
   $$(r-r_0)\int_0^{\sigma}\cdots\, dt$$
   as small as we wish, by adjusting  $\sigma$. The  remaining integral over $[\sigma,1]$ multiplied by $r-r_0$ approaches zero, as $r\to r_0$.

   In conclusion, the main term dominates in some small interval $r_0 < r < r_0 + \delta.$ This was the case $sp\not = 1.$

   The case $\boxed{sp=1}$	 requires minor modifications. Doing the same calculations again, the final formula (\ref{fyra}) has again four integrals. The main term $\mathrm{I}$ is the same. So is
   $\mathrm{I\!I}.$ But in $\mathrm{I\!I\!I}$ and $\mathrm{I\!V}$ one has to change the expressions of the type
   $$\frac{1}{1-sp}\Bigl\{\ldots\Bigr\}$$
   using the limit procedure
   $$\lim_{sp\to 1}\frac{b^{1-sp}-a^{1-sp}}{1-sp} \,=\, \ln\left(\frac{b}{a}\right).$$
   After these replacements, the proof goes as above.
 \end{proof}

 \paragraph{One dimension.} The function
 $\omega_1(x) = (|x|-1)_+^{\beta} $ of one variable  satisfies the equation below, when $|x|>1$. We omit the calculations, which are of the same kind as in the three dimensional situation, although shorter. We remark that the function $(|x|^2-1)_+^{\beta}$ seems to be nicer, but it offers us no advantages in the calculations, on the contrary it produces longer expressions.
 \begin{align*}\Lps \omega_1(x) =
   \frac{\beta(p-1)}{sp}& (|x|-1)^{\beta(p-1)-sp}\\ \times\quad \Biggl\{ &\int\limits_0^1\!(t^{p(s-\beta)}-1)(1-t^{\beta})^{p-2}(1-t)^{-sp}t^{\beta-1}\, dt\\ +&\int\limits_0^1\!\Bigl[1+t+\frac{2}{|x|-1}\Bigr]^{-sp}(1-t^{\beta})^{p-2}t^{\beta-1}
   \, dt\\
+ &\int\limits_0^1\! \Bigl[1+t+\frac{2t}{|x|-1}\Bigr]^{-sp}t^{p(s-\beta)}  (1-t^{\beta})^{p-2}t^{\beta-1}\,dt \Biggr\}
 \end{align*}
 The first integral is \emph{strictly negative} when $0 < \beta < s$ and the two other integrals approach zero as $x \to 1+0$ or $x \to -1-0$. Again we get \emph{a strict supersolution} in a thin ''shell'' $1 < |x| < 1 + \delta.$\\
 
\bibliographystyle{amsrefs}
\bibliography{ref}
\end{document}